\newtheorem{theorem}{Theorem}[section]
\newtheorem{definition}[theorem]{Definition}
\newtheorem{lemma}[theorem]{Lemma}
\newtheorem{remark}[theorem]{Remark}
\newtheorem{example}[theorem]{Example}
\numberwithin{equation}{section}
\title{ Weak Necessary and Sufficient Stochastic Maximum Principle for Markovian Regime-Switching Diffusion Models}
\date{}
\author{Yusong Li\footnote{Department of Mathematics, Imperial College, London SW7 2BZ, UK.  Email: y.li11@imperial.ac.uk}\; and Harry Zheng\footnote{Department of Mathematics, Imperial College, London SW7 2BZ, UK.  Email: h.zheng@imperial.ac.uk}}
\begin{document}

\maketitle

\begin{abstract}
In this paper we prove a weak necessary and sufficient maximum principle for Markovian regime switching stochastic optimal control problems. Instead of insisting on the maximum condition of the Hamiltonian, we show that $ 0 $ belongs to the sum of Clarke's generalized gradient of the Hamiltonian and Clarke's normal cone of the control
constraint set at the optimal control. Under a joint  concavity condition on the Hamiltonian and a convexity condition on the terminal objective function, the necessary condition becomes sufficient. We give four examples  to demonstrate  the weak
stochastic maximum principle. 
\end{abstract}

\noindent\textbf{Keywords}: regime switching stochastic optimal control, weak stochastic maximum principle, necessary and sufficient conditions, Clarke's generalized gradient, Clarke's normal cone,  measurable selection.

\bigskip
\noindent\textbf{AMS MSC2010}: 93E20, 49J52.

\section{Introduction}
There has been extensive research in the stochastic control theory. Two principal and most commonly used methods in solving stochastic optimal control problems are the dynamic programming principle and the stochastic maximum principle (SMP). 
The books by Fleming-Rishel \cite{flemingrishel:detestochcontrol},  Fleming-Soner \cite{flemingsoner:controledmarkprocess}, and Yong-Zhou  \cite{yong.zhou:stochasticcontrols} provide excellent expositions and rigorous treatment of the subject of the dynamic programming principle in the optimal deterministic and stochastic control theory. 

Many people have made great contributions in the research of the SMP.
Kushner \cite{kushner:fixedtimecontrol, kushner:nececond} is the first to study 
the necessary SMP. Haussmann \cite{haussmann:SMPdiffusion}, Bensoussan \cite{bensoussan:lecturestochcontrol} and Bismut \cite{bismut:conjugateconvex, bismut:linearquarcontrol, bismut:dualityoptcontrol} extend 
Kushner's SMP to more general 
  stochastic control problems with control-free diffusion coefficients. 
Peng \cite{peng:SMP}  applies the second order spike variation technique to derive the necessary SMP to  stochastic control problems with controlled diffusion coefficients. Zhou \cite{zhou:unifiedtreatment} simplifies Peng's proof. Cadenillas-Karatzas \cite{karatzas:SMPrandomcoeff} extends Peng's SMP  to systems with random coefficients
and Tang-Li \cite{tangli:SMPjump}  with jump diffusions. 
Bismut \cite{bismut:dualityoptcontrol} is the first to investigate the sufficient SMP. 
 Zhou \cite{zhou:sufficientSMPwithcontroldiffus} proves that Peng's SMP is also sufficient in the presence of certain convexity condition. Framstad-{\O}ksendal-Sulem \cite{oksendal:SSMP} extends the sufficient SMP to systems with jump diffusion, Donnelly \cite{donnelly:SMPregimeswitching}  with Markovian regime-switching diffusion
and,  most recently, Zhang-Elliott-Siu \cite{zhangelliottsiu:SMPregimeswitchingjump} with Markovian regime-switching jump diffusion.

Briefly speaking, the necessary SMP states that any optimal control along with the optimal state trajectory must solve a system of forward-backward SDEs (stochastic differential equations) plus a maximum condition of the optimal control on the Hamiltonian. The necessary condition together with certain concavity  conditions on the Hamiltonian give the sufficient condition of optimality. The major difficulty of generalizing the classical Pontryagin's maximum principle to a stochastic control problem with controlled diffusion term is that, in some cases,  the Hamiltonian  is a convex function of the control variable  and achieves the minimum at the optimal control (see \cite[Example 3.3.1]{yong.zhou:stochasticcontrols}). One of the major contributions of Peng's SMP is the introduction of the generalized Hamiltonian  and the second order adjoint stochastic processes. In those cases where the Hamiltonian is convex, it is the second order term that turns the generalized Hamiltonian  to a concave function which achieves the maximum at the optimal control. The generalized Hamiltonian and the second order adjoint equation are introduced
to preserve the maximum condition of Pontryagin's maximum principle.

However, the second order terms also pose problems. Firstly,  one has to  assume that all functions involved are twice continuously differentiable in the state variable in order to use the second order variation, which limits the scope of problems applicable to the theorem. Secondly, one has to solve
the associated second order adjoint backward stochastic differential equation (BSDE) with the dimensionality equal to the square of that of its first order counterpart,
which makes the problem more difficult to solve, at least numerically. Lastly, one can not get the sufficient condition by enhancing the necessary condition
with some joint concavity condition to the generalized Hamiltonian and instead
one has to add some joint concavity condition to the Hamiltonian (compare
\cite[Theorem 3.3.2]{yong.zhou:stochasticcontrols} and \cite[Theorem 3.5.2]{yong.zhou:stochasticcontrols}), which illustrates
that the necessary SMP is not completely compatible
with the sufficient SMP. This motivates us to relax
the requirement of the maximality of  the Hamiltonian at the optimal control and to seek a weak but compatible necessary and sufficient SMP.

In this paper we assume that the control constraint set is a closed convex set. The second order adjoint processes can also be dropped in \cite{peng:SMP}, see \cite{yong.zhou:stochasticcontrols}, under the differentiability conditions for state and control variables. However, the philosophy of this paper is different from that of \cite{peng:SMP} in the sense that we do not try to preseve Pontryagin's maximum principle but instead try to find all stationary points of the Hamiltonian, which may open the way for new results when the control constraint set is  nonconvex.

The main contribution of this paper is that we prove a weak version of the
necessary and sufficient SMP for Markovian regime switching diffusion stochastic optimal control problems. Instead of insisting on the Hamiltonian to achieve the maximum at the optimal control, which is in general impossible,
we relax the necessary condition by only requiring the optimal control
to be a stationary point of the Hamiltonian. Specifically, 
we prove that 0 belongs to the sum of Clarke's generalized gradient of the Hamiltonian
and Clarke's normal cone of the control constraint set at the optimal control
almost surely almost everywhere. Under the joint concavity condition on the
Hamiltonian and the convexity condition on the terminal objective function, the necessary condition becomes the sufficient condition. 

The advantage of the weak SMP is the following. 
Firstly, the second order differentiability of the coefficients and the objective functions in the state variable is not required as the weak SMP does not have any second order terms.
 Secondly, the differentiability of the coefficients and the objective functions in the control variable is not required as the weak SMP uses Clarke's generalized gradients  to describe the optimal control. Thirdly,
the dimensionality of the BSDE is much reduced as the second order adjoint
process is not involved. 
Lastly, the necessary condition and the sufficient
condition are compatible with each other in the sense that the necessary condition provides a stationary point
while the sufficient condition confirms its optimality, which
is in the same spirit as the necessary and sufficient conditions in the  finite dimensional optimization.

The rest of the paper is organized as follows. Section 2 introduces the notations,
 the formulation of the regime switching stochastic control problem and the basic assumptions. 
Section 3 states  the  main theorems of the paper, the weak necessary SMP  (Theorem \ref{WNSMP}) and the weak sufficient SMP  (Theorem \ref{WSSMP}).
Section 4 gives four examples to demonstrate the usefulness of the weak SMP in solving regime switching stochastic control problems, including nonsmooth noncave case and regime-switching noncave case.  Section 5 establishes some useful preliminary results on Clarke's generalized gradient and normal cone, Markovian regime switching SDE and BSDE, moment estimates, Lipschitz property, Taylor expansion and duality analysis. Section 6  proves the main theorems. Section 7 concludes.
The appendix gives the proof of Theorem \ref{RSBSDEtheorem} (existence and uniqueness of the solution to a regime switching BSDE) for completeness.

\section{Problem Formulation}
In this section, we formulate the stochastic control problem in a regime switching diffusion model and introduce some assumptions. Here we adopt the model in \cite{donnelly:SMPregimeswitching}

Let $ \left(\Omega,\mathcal{F},\mathbb{P}\right) $ be a complete probability space with a $ \mathbb{P} $ complete right continuous filtration. Let the previsible $\sigma$-algebra on $ \Omega\times [0,T] $ associated with the filtration $ \left\{ \mathcal{F}_t:t\in[0,T] \right\} $, denoted by $ \mathcal{P}^{\star} $, be the smallest $ \sigma $-algebra on $ \Omega\times[0,T] $ such that every $ \left\{ \mathcal{F}_t \right\} $-adapted stochastic process which is left continuous with right limit is $ \mathcal{P}^\star $ measurable. A stochastic process $ X $ is previsible, written as $ X\in\mathcal{P}^\star $,  provided it is $ \mathcal{P}^\star $ measurable.

Let $ W(\cdot) $ be  an $ m $-dimensional standard Brownian motion and $ \alpha(\cdot) $ a continuous time finite state observable Markov chain, which are independent of each other. $ \left\{ \mathcal{F}_t \right\} $ is the natural filtration generated by $ W $ and $ \alpha $, completed with all $ \mathbb{P} $-null sets, denoted by
\begin{align*}
\mathcal{F}_t=\sigma\left[W(s):0 \leq s \leq t\right]\bigvee\sigma\left[\alpha(s):0\leq s\leq t\right]\bigvee\mathcal{N},
\end{align*}
where $ \mathcal{N} $ denotes the totality of $ \mathbb{P} $-null sets. 

Let the Markov chain take values in the state space $ I=\left\{ 1,2,\cdots,d-1,d \right\} $ and start from initial state $ i_0\in I $ with a $ d\times d $ generator matrix $ \mathcal{Q}=\left\{ q_{ij} \right\}_{i,j=1}^d $. For each pair of distinct states $ \left( i,j \right) $, define the counting process $ \left[ Q_{ij} \right] :\Omega\times[0,T]\rightarrow\mathbb{N} $ by
\begin{align*}
[Q_{ij}](\omega,t):=\sum_{0 < s\leq t}\mathcal{X}\left[ \alpha(s-)=i \right](\omega)\mathcal{X}[\alpha(s)=j](\omega),\forall t\in[0,T],
\end{align*}
and the compensator process $ \langle Q_{ij} \rangle:\Omega\times[0,T]\rightarrow[0,+\infty )$ by
\begin{align*}
\langle Q_{ij}\rangle(\omega,t):=q_{ij}\int_0^t\mathcal{X}\left[ \alpha(s-)=i \right](\omega)ds,\forall t\in[0,T],
\end{align*}
where $ \mathcal{X} $ is an indicator function. The processes 
$$ Q_{ij}(\omega,t):=[Q_{ij}](\omega,t)-\langle Q_{ij}\rangle(\omega,t)$$
 is a purely discontinuous square-integrable martingale with initial value zero (\cite[Lemma IV.21.12]{rw:diffusions}).

Consider a stochastic control model where the state of the system is governed by a controlled Markovian regime-switching SDE:
\begin{equation}\label{stateeqn}
\left\{ \begin{array}{cl}
dx(t)= & b(t,x(t),u(t),\alpha(t-))dt+\sigma(t,x(t),u(t),\alpha(t-))dW(t)\\
x(0)= & x_0\in\mathbb{R}^n, \alpha(0)=i_0\in I,
\end{array}
\right.
\end{equation}
where $ u(\cdot)$ is a $ \mathbb{R}^k $ valued previsible process, $ T>0 $ is a fixed finite time horizon, $ b:[0,T]\times\mathbb{R}^n\times\mathbb{R}^k\times I \rightarrow \mathbb{R}^n$ and $\sigma:[0,T]\times\mathbb{R}^n\times\mathbb{R}^k\times I \rightarrow \mathbb{R}^{n\times m}$ are given continuous functions satisfying the following assumptions:
\begin{enumerate}
\item[\textbf{(A1)}] The maps $ b$ and $ \sigma $ are measurable, and there exist constant $ K>0 $ such that for $ \varphi=b \text{ and } \sigma $, we have
\begin{align*}
\left\{
\begin{array}{ll}
\vert \varphi(t,x,u,i)-\varphi(t,\hat{x},\hat{u},i) \vert \leq K\left( \vert x-\hat{x} \vert + \vert u-\hat{u} \vert\right)\\
\forall t\in[0,T]; i \in I ; x,\hat{x}\in\mathbb{R}^n ; u,\hat{u}\in\mathbb{R}^k,\\
\vert \varphi(t,0,0,i) \vert <K, \ \forall t\in[0,T], \forall i\in I.
\end{array}
\right.
\end{align*}

\item[\textbf{(A2)}] The maps $ b $ and $\sigma $ are $ C^1 $ in $ x $ and there exist a constant $ L>0 $ and a modulus of continuity $ \bar{\omega} : [0,+\infty)\rightarrow [0,+\infty)$ such that
\begin{align*}
\left\{
\begin{array}{ll}
\vert \varphi_x(t,x,u,i)-\varphi_x(t,\hat{x},\hat{u},i) \vert \leq L\vert x-\hat{x} \vert + \bar{\omega}(d(u,\bar{u}))\\
 \forall t\in[0,T]; i \in I ; x,\hat{x}\in\mathbb{R}^n ; u,\hat{u}\in\mathbb{R}^k,
 \end{array}
 \right.
\end{align*}
where $\varphi_x(t,x,u,i)$ is the partial derivative of $\varphi$ with respect to $x$ at the point $(t,x,u,i)$. 
\end{enumerate}

Consider the cost functional 
\begin{equation}\label{costfun}
J(u)= E\left[ \int_0^T f(t,x(t),u(t),\alpha(t))dt + h(x(T),\alpha(T)) \right],
\end{equation}
where $ f:[0,T]\times\mathbb{R}^n\times\mathbb{R}^k\times I \rightarrow \mathbb{R} $ and $ h:\mathbb{R}^n\times I \rightarrow \mathbb{R} $ are given functions satisfying the following assumptions:
\begin{enumerate}
\item[\textbf{(A3)}] The maps $ f $ and $ h $ are measurable and there exist constants $ K_1, K_2 \geq 0 $ such that
\begin{align*}
\left\{
\begin{array}{ll}
\vert f(t,x,u,i)-f(t,x,\hat{u},i) \vert \leq \left[ K_1+K_2(\vert x \vert + \vert u \vert + \vert \hat{u} \vert) \right]\vert u-\hat{u} \vert,\\
\vert f(t,0,0,i) \vert+\vert h(0,i) \vert <K_1, \ \forall t\in[0,T], \forall i\in I.
\end{array}
\right.
\end{align*}
\item[\textbf{(A4)}] The maps $ f $ and $ h $ are $ C^1 $ in $ x $ and  there exist a constant $ L>0 $ and a modulus of continuity $ \bar{\omega}:[0,+\infty)\rightarrow[0,+\infty) $ such that for $ \varphi=f \text{ and } h $, we have
\begin{align*}
\left\{
\begin{array}{ll}
\vert \varphi_x(t,x,u,i)-\varphi_x(t,\hat{x},\hat{u},i) \vert \leq L\vert x-\hat{x} \vert + \bar{\omega}(d(u,\bar{u})),\\
\forall t\in[0,T]; i \in I ; x,\hat{x}\in\mathbb{R}^n ; u,\hat{u}\in\mathbb{R}^k,\\
\vert \varphi_x(t,0,0,i) \vert \leq L, \forall t\in[0,T], i\in I.
\end{array}
\right.
\end{align*}
\end{enumerate}
\begin{remark}\rm
Assumptions \textbf{(A3)} and \textbf{(A4)} together cover many cases, including all quadratic functions in $ x $ and $ u $. For instance, if $ f $ is Lipschitz in $ u $, then $ K_2=0 $. On the other hand, if $ f $ is differentiable with respect to $ u $ and $ f_u $ satisfies a linear growth condition in $ u $, then $ K_2 $ is a positive constant.
\end{remark}

Consider a measure space $ (S, \mathcal{P}^\star, \mu) $, where $ S=\Omega \times [0,T] $ and $ \mu = \mathbb{P}\times Leb $. Define $ L^p(S;\mathbb{R}^q) \text{ for } p,q\in\mathbb{N}^+ $ to be the Banach space of $ \mathbb{R}^q $ valued $ \mathcal{P}^\star $ measurable functions $ f:\Omega\times [0,T]\rightarrow \mathbb{R}^q $ such that
\begin{equation}
\|f\|:=\left( \int_0^T E\vert f(t) \vert^p dt \right)^\frac{1}{p} < \infty.
\end{equation}
Similarly, define $ L^p_{\mathcal{F}}(S;\mathbb{R}^q) \text{ for } p,q\in\mathbb{N}^+ $ to be the space of $ \mathbb{R}^q $ valued $ \mathcal{F}_t $ progressively measurable $ p $th order integrable processes.

According to Theorem \ref{exisuniqSDE}, under assumption \textbf{(A1)}, for any $ u\in L^4(S;\mathbb{R}^k) $, the state equation \eqref{stateeqn} admits a unique solution and the cost functional \eqref{costfun} is well defined. A control is called admissible if it is valued in $ U $, a non-empty closed convex subset of $ \mathbb{R}^k $ and $ u\in L^4(S;\mathbb{R}^k) $. Denoted by $ \mathcal{U}_{ad} $ the set of admissible controls. In the case that $ x $ is a solution of \eqref{stateeqn} corresponding to an admissible control $ u\in \mathcal{U}_{ad} $, we call $ (x,u) $ an admissible pair and $ x $ an admissible state process.

Our optimal control problem can be stated as follows\\
\\
\textbf{Problem (S)} Minimize \eqref{costfun} over $ \mathcal{U}_{ad} $.\\
\\
Any $ \bar{u}\in\mathcal{U}_{ad} $ satisfying 
\begin{align*}
J(\bar{u})=\inf_{u\in\mathcal{U}_{ad}}J(u)
\end{align*}
is called an \textit{optimal control}. The corresponding $ \bar{x} $ and $ (\bar{x},\bar{u}) $ are called an \textit{optimal state process} and \textit{optimal pair}, respectively.

\section{Weak Stochastic Maximum Principle}
In this section we state the weak necessary and sufficient stochastic maximum principle in the regime-switching diffusion model.

The Hamiltonian $ H:[0,T]\times\mathbb{R}^n\times\mathbb{R}^k\times I\times\mathbb{R}^n\times\mathbb{R}^{n\times m} \rightarrow \mathbb{R}$ for the stochastic control problem \eqref{stateeqn} and \eqref{costfun} is defined by:
\begin{align}\label{Hamiltonian}
\begin{split}
H(t,x,u,i,p,q):=&-f(t,x,u,i)+b^\intercal(t,x,u,i)p+tr(\sigma^\intercal(t,x,u,i)q).
\end{split}
\end{align}
Given an admissible pair $ (x,u) $, the adjoint equation in the unknown adapted processes $ p(t)\in\mathbb{R}^n, q(t)\in\mathbb{R}^{n\times m} $ and $ s(t)=(s^{(1)}(t),\cdots,s^{(n)}(t)) $, where $ s^{(l)}(t)\in\mathbb{R}^{d\times d} $ for $ l=1,\cdots,n $, is the following regime-switching BSDE:
\begin{equation}\label{ajointeqn}
\left\{
\begin{array}{ll}
dp(t)=&-H_x(t,x(t),u(t),\alpha(t-),p(t),q(t))dt+q(t)dW(t)+s(t)\bullet dQ(t)\\
p(T)=&-h_x(x(T),\alpha(T)),
\end{array}
\right.
\end{equation}
where
\begin{align*}
s(t)\bullet dQ(t)\equiv\left( \sum_{j\neq i}s^{(1)}_{ij}(t)dQ_{ij}(t),\cdots , \sum_{j\neq i}s^{(n)}_{ij}(t)dQ_{ij}(t)\right)^\intercal.
\end{align*}

By Theorem \ref{RSBSDEtheorem}, we claim that under assumptions \textbf{(A1)-(A4)}, for any $ (x,u)\in L^2_{\mathcal{F}}(S;\mathbb{R}^n)\times L^4(S;\mathbb{R}^k) $, \eqref{ajointeqn} admits a unique solution $ \{ (p(t),q(t),s(t))\vert t\in[0,T]\} $ in the sense of Definition \ref{RSBSDEdefn}. If $ (\bar{x},\bar{u}) $ is an optimal (resp. admissible) pair and $ (\bar{p},\bar{q},\bar{s}) $ is the adapted solution of \eqref{ajointeqn}, then $ (\bar{x},\bar{u}, \bar{p},\bar{q},\bar{s}) $ is called an optimal (resp. admissible) 5-tuple.

We can now state the main results of the paper.

\begin{theorem}\label{WNSMP}
(Weak Necessary SMP with Regime-Switching) Let assumptions \textbf{(A1)-(A4)} hold. Let $ (\bar{x},\bar{u}) $ be an optimal pair of \textbf{Problem (S)}. Then there exists stochastic process $ (\bar{p},\bar{q},\bar{s}) $ which is an adapted solution to \eqref{ajointeqn}, such that 
\begin{equation}\label{neceSMPcond}
0\in\partial_u (-H)(t,\bar{x}(t),\bar{u}(t),\alpha(t-),\bar{p}(t),\bar{q}(t))+N_U(\bar{u}(t)), \mbox{a.e. }t\in[0,T], \mathbb{P}\mbox{-a.s.},
\end{equation} 
where $ \partial_u (-H)(t,\bar{x}(t),\bar{u}(t),\alpha(t-),\bar{p}(t),\bar{q}(t))$ 
is Clarke's generalized gradient of $-H$ with respect to variable $u$ at point 
$(t,\bar{x}(t),\bar{u}(t),\alpha(t-),\bar{p}(t),\bar{q}(t))$ and $N_U(\bar{u}(t))$ is Clarke's  normal cone of $U$ at point $\bar{u}(t)$ (see Subsection \ref{ClarkeIntro} for details).
\end{theorem}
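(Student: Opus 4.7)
The plan is to combine a first-order convex perturbation argument (no spike variation is needed since $U$ is convex) with Clarke's generalized calculus (since $b,\sigma,f,h$ are only Lipschitz in $u$), and then a measurable-selection step that converts an integrated variational inequality into the pointwise inclusion \eqref{neceSMPcond}. Throughout, $(\bar p,\bar q,\bar s)$ denotes the unique adapted solution of \eqref{ajointeqn} associated with $(\bar x,\bar u)$, guaranteed by Theorem \ref{RSBSDEtheorem}.

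First I would fix $v\in\mathcal{U}_{ad}$ and $\epsilon\in(0,1]$, set $u^\epsilon:=\bar u+\epsilon(v-\bar u)\in\mathcal{U}_{ad}$ (by convexity of $U$), and let $x^\epsilon$ solve \eqref{stateeqn} with control $u^\epsilon$. Assumption \textbf{(A1)} together with the moment estimates promised in Section~5 gives $\sup_{0\leq t\leq T}E|x^\epsilon(t)-\bar x(t)|^4=O(\epsilon^4)$. I then apply It\^o's formula for regime-switching semimartingales to $\bar p(t)^\intercal(x^\epsilon(t)-\bar x(t))$ along \eqref{ajointeqn}; since $x^\epsilon-\bar x$ is continuous in $t$, it has no covariation with $\bar s\bullet Q$, so the jump-martingale contribution drops in expectation. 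Using $\bar p(T)=-h_x(\bar x(T),\alpha(T))$, the definition \eqref{Hamiltonian} of $H$, and the $C^1$-in-$x$ Taylor and Lipschitz expansions of Section~5 to cancel the first-order-in-$x$ terms, one obtains
\[
J(u^\epsilon)-J(\bar u)= -\,E\!\int_0^T\!\Bigl[H(t,\bar x,u^\epsilon,\alpha(t-),\bar p,\bar q)-H(t,\bar x,\bar u,\alpha(t-),\bar p,\bar q)\Bigr]\,dt + o(\epsilon).
\]

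Combining with optimality $J(u^\epsilon)\geq J(\bar u)$, dividing by $\epsilon$, taking $\limsup$, and invoking the Lipschitz bound on $H$ in $u$ together with the reverse Fatou lemma to pass the $\limsup$ inside $E\!\int$, we obtain for every $v\in\mathcal{U}_{ad}$
\[
E\!\int_0^T (-H)^\circ_u\bigl(t,\bar x(t),\bar u(t),\alpha(t-),\bar p(t),\bar q(t);\,v(t)-\bar u(t)\bigr)\,dt \geq 0,
\]
where $(-H)^\circ_u(\,\cdots\,;\,\cdot\,)$ denotes Clarke's generalized directional derivative of $-H$ in the variable $u$. Plugging in $v=\bar u\,\mathbf{1}_{A^c}+w\,\mathbf{1}_A$ for arbitrary $\mathcal{P}^\star$-measurable $A$ and $w$ ranging over a countable dense subset of $U$ localizes this to the pointwise statement
\[
(-H)^\circ_u\bigl(t,\bar x(t),\bar u(t),\alpha(t-),\bar p(t),\bar q(t);\,w-\bar u(t)\bigr)\geq 0\quad\forall\,w\in U,
\]
for a.e.\ $t$, $\mathbb{P}$-a.s. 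The support-function identity $(-H)^\circ_u(\bar u;d)=\sup_{\eta\in\partial_u(-H)(\bar u)}\langle\eta,d\rangle$, the description $N_U(\bar u)=\{\xi:\langle\xi,w-\bar u\rangle\leq 0\ \forall w\in U\}$ for convex $U$, and a standard minimax/convex-separation argument using the compact convexity of $\partial_u(-H)(\bar u)$ (all assembled in Section~5.1) then translate this pointwise condition into precisely \eqref{neceSMPcond}.

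The main obstacle is the nonsmoothness in $u$: the classical expansion $H(\bar u+\epsilon(v-\bar u))-H(\bar u)=\epsilon H_u^\intercal(v-\bar u)+o(\epsilon)$ is unavailable and must be replaced throughout by Clarke's generalized directional derivative and its support-function representation. A second delicate point is extracting a single $\eta\in\partial_u(-H)(\bar u)$ that certifies $-\eta\in N_U(\bar u)$ simultaneously for every $w\in U$ from the $w$-dependent integrated inequality; this relies on the closed-graph, upper semicontinuity and measurability properties of Clarke's gradient developed in Section~5.1, together with a measurable selection theorem. The regime-switching features themselves are essentially routine once the correct It\^o formula and the regime-switching BSDE theory (Theorem \ref{RSBSDEtheorem} and Section~5) are in hand.
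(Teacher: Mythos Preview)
Your proposal is correct and reaches the same conclusion, but the route differs from the paper's in a meaningful way.

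The paper works abstractly in the Banach space $L^4(S;\mathbb{R}^k)$: it introduces the functional $\mathcal{H}^{\bar u}(u)=E\int_0^T(-H)(t,\bar x,u,\alpha,\bar p,\bar q)\,dt$, uses the same first-order expansion you derive (their Lemma~\ref{importlemma}) to conclude $J^o(\bar u;v)=(\mathcal{H}^{\bar u})^o(\bar u;v)$ and hence $\partial J(\bar u)=\partial\mathcal{H}^{\bar u}(\bar u)\subset L^{4/3}$, and then invokes Clarke's abstract optimality condition $0\in\partial J(\bar u)+N_{\mathcal U_{ad}}(\bar u)$. This yields a single dual element $\zeta\in L^{4/3}$ satisfying two integrated inequalities; the passage to the pointwise inclusion is then carried out by building auxiliary set-valued maps $\Pi_n^M,\Gamma_n^M$ and applying the Aubin--Frankowska measurable selection theorem together with their tangent-cone representation (Theorem~\ref{TangLeb}). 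Your route is more direct and more elementary: you never form Clarke's gradient of $J$ in the function space, but bound $\limsup_{\epsilon\downarrow 0}[J(\bar u+\epsilon(v-\bar u))-J(\bar u)]/\epsilon$ by $E\int(-H)^\circ_u(\bar u;v-\bar u)\,dt$ via reverse Fatou, localize with indicator controls $v=\bar u\,\mathbf 1_{A^c}+w\,\mathbf 1_A$ and a countable dense subset of $U$, and then run a compact--convex separation argument at each fixed $(t,\omega)$. What you gain is that the infinite-dimensional Clarke calculus and the set-valued measurable selection machinery are bypassed entirely; in particular the ``second delicate point'' you flag is not really an obstacle, since the conclusion \eqref{neceSMPcond} is a pointwise set-membership and requires no measurable choice of $\eta$. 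What the paper's approach buys is an explicit $L^{4/3}$ multiplier $\zeta$ witnessing the inclusion globally, and the structural identity $\partial J(\bar u)=\partial\mathcal H^{\bar u}(\bar u)$, which may be of independent interest.
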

\begin{theorem}\label{WSSMP}
(Weak Sufficient SMP with Regime-Switching) Let assumptions \textbf{(A1)-(A4)} hold and let $ (\bar{x},\bar{u},\bar{p},\bar{q},\bar{s}) $ be an admissible 5-tuple satisfying \eqref{neceSMPcond}. Suppose further that $ h(\cdot,\alpha(T)) $ is convex and the Hamiltonian $ H(t,\cdot,\cdot,\alpha(t-),\bar{p}(t),\bar{q}(t)) $ is concave for all $ t\in[0,T] $ a.s. Then $ (\bar{x},\bar{u}) $ is an optimal pair for \textbf{Problem (S)}.
\end{theorem}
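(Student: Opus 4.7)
The plan is the standard duality argument adapted to regime switching and to Clarke's nonsmooth calculus. Fix any admissible pair $(x,u)$ and compare with the candidate 5-tuple $(\bar x,\bar u,\bar p,\bar q,\bar s)$. Using the shorthand $\Delta\varphi(t):=\varphi(t,x(t),u(t),\alpha(t-))-\varphi(t,\bar x(t),\bar u(t),\alpha(t-))$ for $\varphi\in\{b,\sigma,f,H\}$ (harmless since $\alpha(t-)=\alpha(t)$ a.e.), convexity of $h(\cdot,\alpha(T))$ together with the terminal condition $\bar p(T)=-h_x(\bar x(T),\alpha(T))$ yields
$$h(x(T),\alpha(T))-h(\bar x(T),\alpha(T))\ \geq\ h_x(\bar x(T),\alpha(T))^\intercal(x(T)-\bar x(T))\ =\ -\bar p(T)^\intercal(x(T)-\bar x(T)),$$
so that
$$J(u)-J(\bar u)\ \geq\ E\!\int_0^T\!\Delta f(t)\,dt\ -\ E\bigl[\bar p(T)^\intercal(x(T)-\bar x(T))\bigr].$$

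Next I would apply the Itô product rule (regime-switching version) to $\bar p(t)^\intercal(x(t)-\bar x(t))$. Because $x-\bar x$ is continuous, the only non-trivial quadratic covariation comes from pairing $\bar q\,dW$ in $d\bar p$ with $\Delta\sigma\,dW$ in $d(x-\bar x)$, while the stochastic integrals against $W$ and against the compensated counting martingales $Q_{ij}$ are true martingales with zero expectation (thanks to the $L^2$ estimates collected in Section 5). Substituting $d\bar p=-H_x(t,\bar x,\bar u,\alpha(t-),\bar p,\bar q)\,dt+\bar q\,dW+\bar s\bullet dQ$, taking expectations, and invoking the identity $\bar p^\intercal\Delta b+\mathrm{tr}(\Delta\sigma^\intercal\bar q)=\Delta H+\Delta f$ which follows directly from the definition of $H$, the $\Delta f$ integrals cancel and one is left with
$$J(u)-J(\bar u)\ \geq\ -E\!\int_0^T\!\bigl\{\Delta H(t)\,-\,H_x(t,\bar x,\bar u,\alpha(t-),\bar p,\bar q)^\intercal(x(t)-\bar x(t))\bigr\}\,dt.$$

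It is enough to show the integrand on the right is nonpositive almost surely, almost everywhere. Since $H(t,\cdot,\cdot,\alpha(t-),\bar p,\bar q)$ is concave, $-H$ is convex in $(x,u)$ and its Clarke generalized gradient coincides with the convex subdifferential; differentiability in $x$ then gives, for any $\zeta(t)\in\partial_u(-H)(t,\bar x,\bar u,\alpha(t-),\bar p,\bar q)$,
$$\Delta H(t)\,-\,H_x(t,\bar x,\bar u,\alpha(t-),\bar p,\bar q)^\intercal(x(t)-\bar x(t))\ \leq\ -\zeta(t)^\intercal(u(t)-\bar u(t)).$$
The hypothesis \eqref{neceSMPcond} lets me select such a $\zeta$ satisfying $-\zeta(t)\in N_U(\bar u(t))$; by the characterisation of Clarke's normal cone of the closed convex set $U$, this gives $\zeta(t)^\intercal(u(t)-\bar u(t))\geq 0$ for every admissible $u(t)\in U$, and the two inequalities combine to $J(u)-J(\bar u)\geq 0$.

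The main obstacle I anticipate is the measurable selection of $\zeta$: one must verify that $(\omega,t)\mapsto\partial_u(-H)(t,\bar x(t),\bar u(t),\alpha(t-),\bar p(t),\bar q(t))\cap(-N_U(\bar u(t)))$ is $\mathcal{P}^\star$-measurable with nonempty closed convex values and admits a selection with enough integrability to justify the Fubini exchanges. Upper semicontinuity of Clarke's gradient, closedness of the normal-cone map, and the measurable-selection toolkit announced in Section 5 should deliver this, but it is the step in which the regime-switching dynamics, the nonsmoothness in $u$, and the control constraint all interact, and therefore the one that requires the most care.
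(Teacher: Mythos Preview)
Your duality argument is correct and essentially matches the paper's proof: both apply It\^o's formula to $\langle \bar p(t),x(t)-\bar x(t)\rangle$, use convexity of $h$ for the terminal term, and reduce the comparison $J(u)\geq J(\bar u)$ to the pointwise inequality $\Delta H(t)-H_x(t,\bar x,\bar u,\alpha(t-),\bar p,\bar q)^\intercal(x(t)-\bar x(t))\leq 0$.

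The one place you diverge is in how that pointwise inequality is obtained, and here you have manufactured a difficulty that is not there. You do \emph{not} need a measurable selection of $\zeta$. You already reduced the problem to showing that a certain real-valued integrand is nonpositive for $\mu$-a.e.\ $(\omega,t)$; for each such fixed $(\omega,t)$, hypothesis \eqref{neceSMPcond} supplies \emph{some} $\zeta\in\partial_u(-H)\cap(-N_U(\bar u(t)))$, and your two displayed inequalities then give the desired sign pointwise. Since $\zeta$ never appears under an integral sign, its measurability and integrability are irrelevant. The paper makes this transparent by first observing that, under concavity of $H(t,\bar x(t),\cdot)$ and convexity of $U$, the condition $0\in\partial_u(-H)+N_U(\bar u(t))$ is equivalent to $\bar u(t)$ maximizing $H(t,\bar x(t),\cdot)$ over $U$; it then invokes the standard separating-hyperplane argument (as in Framstad--{\O}ksendal--Sulem) to obtain the joint inequality \eqref{sufficientclaim}. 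Your subdifferential calculation is exactly the content of that argument, just written out explicitly.
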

\begin{remark}\rm
In the special case where $ \mathcal{F}_t=\sigma[W(s):0\leq s\leq t]\bigvee \mathcal{N} $, i.e., the randomness of the system is generated only by the Brownian motion, the Hamiltonian (\ref{Hamiltonian}) and all other functions are free of index $i$ or Markov chain processs value $\alpha(t-)$. The adjoint equation (\ref{ajointeqn}) is a pure Brownian BSDE (no $s(t)\bullet dQ(t)$ term). The weak SMP remains the same as Theorem \ref{WNSMP} and \ref{WSSMP}, but only involves the $ 4 $-tuple $ (\bar{x},\bar{u},\bar{p},\bar{q}) $.
\end{remark}

\section{Examples}
In this section, we present four examples to demonstrate our main theorems. 
\subsection{Examples: Weak SMP without Regime-Switching}
In this subsection, we consider two examples from \cite{yong.zhou:stochasticcontrols} and derive the same results as  those in  \cite{yong.zhou:stochasticcontrols} using Theorem \ref{WNSMP} and Theorem \ref{WSSMP}. A key property to use in our approach is the adaptedness of the adjoint process.

\begin{example}(Concave Hamiltonian)\label{example1}
{\rm
Consider the following stochastic control problem
\cite[Example 3.5.3]{yong.zhou:stochasticcontrols}:
\begin{equation}\label{example1stateqn}
\left\{
\begin{array}{cl}
& dx(t)=u(t)dW(t), t\in[0,1]\\
& x(0)=0
\end{array}\right.
\end{equation}
with the control constraint set $ U=[0,1] $ and the cost functional 
\begin{align*}
J(u) = E\left\{ -\int_0^1 u(t)dt +\frac{1}{2}x(1)^2 \right\}.
\end{align*}
Suppose $ (\bar{x},\bar{u}) $ is an optimal pair, then the corresponding adjoint equation is
\begin{equation}\label{example1adjointqn}
\left\{
\begin{array}{cl}
& d\bar{p}(t)=\bar{q}(t)dW(t), t\in[0,1]\\
& \bar{p}(1)=-\bar{x}(1).
\end{array}\right.
\end{equation}
Using \eqref{example1stateqn} and \eqref{example1adjointqn} and via a simple calculation we obtain
\begin{align*}
\bar{p}(t)=-\int_0^t\bar{u}(s)dW(s)-\int_t^1\left( \bar{u}(s)+\bar{q}(s) \right)dW(s).
\end{align*}
Since the adjoint process $ \bar{p}(t) $ is adapted to the filtration $ \mathcal{F}_t $ , we must have
\begin{equation}\label{adapted1}
\bar{u}(t)+\bar{q}(t)=0 \textit{ for all } t\in[0,1], \mathbb{P}\textit{-a.s.}
\end{equation}
The corresponding Hamiltonian is 
\begin{align*}
H(t,x,u,\bar{p}(t),\bar{q}(t)) = \bar{q}(t)u + u.
\end{align*}
Since the problem satisfies \textbf{(A1)}-\textbf{(A4)}, by Theorem \ref{WNSMP} and 
(\ref{neceSMPcond}),  we have 
\begin{align*}
0 &\in -(\bar{q}(t)+1)+N_{[0,1]}(\bar{u}(t)) \textit{ for all } t\in[0,1], \mathbb{P}\textit{-a.s}.
\end{align*}
Consequently, on any nonzero measurable set $ E\in S=\Omega \times [0,1] $, we can only have the following three cases: 
\begin{description}
\item[Case 1]: $ 0<\bar{u}(t)<1 \Longrightarrow N_{[0,1]}\left( \bar{u}(t) \right)=\{0\} \textit{ and } \bar{q}(t)=-1 $.
\item[Case 2]: $ \bar{u}(t)=0 \Longrightarrow N_{[0,1]}\left( \bar{u}(t) \right)= (-\infty,0] \textit{ and } \bar{q}(t)+1 \leq 0 $.
\item[Case 3]: $ \bar{u}(t)=1 \Longrightarrow N_{[0,1]}\left( \bar{u}(t) \right)= [0,+\infty) \textit{ and } \bar{q}(t)+1 \geq 0 $.
\end{description}
Suppose Case 1 or Case 2 is true, then $ \bar{u}(t)+\bar{q}(t)\leq \bar{u}(t)-1< 0 $ for some nonzero measurable set $ E\in S $, contradiction to (\ref{adapted1}). Hence, we have $ \bar{u}(t)=1 $ for every $t\in[0,1], \mathbb{P}$-a.s.  and $ \bar{x}(t)=W(t) $ and $ (\bar{p}(t),\bar{q(t)})=(-W(t),-1) $ for $ t\in [0,1] $. Since $(x,u)\mapsto H(t,x,u, \bar{p}(t),\bar{q}(t)) = -u+u=0 $ is concave and $x\mapsto h(x)=\frac{1}{2}x^2 $ is convex, we conclude that $ \bar{u}(t)=1 $ is the optimal control using Theorem \ref{WSSMP}.
}\end{example}

\begin{example}(Nonconcave nonsmooth Hamiltonian)\label{nonsmoothexample}
{\rm
Consider the following stochastic control problem
\begin{equation}\label{nonsmoothstateqn}
\left\{
\begin{array}{cl}
& dx(t)=\dfrac{1}{2}\vert u(t)\vert dW(t), t\in[0,1]\\
& x(0)=0
\end{array}\right.
\end{equation}
with the control constraint set $ U=[-1,1] $ and the cost functional 
\begin{align*}
J(u) = E\left\{\int_0^1 [x(t)^2-\frac{1}{2}u(t)^2]dt + x(1)^2\right\}.
\end{align*}
Suppose $ (\bar{x},\bar{u}) $ is an optimal pair, then the corresponding adjoint equation is 
\begin{equation}\label{nonsmoothadjointqn}
\left\{
\begin{array}{cl}
& d\bar{p}(t)=2\bar{x}(t)dt+\bar{q}(t)dW(t), t\in[0,1]\\
& \bar{p}(1)=-2\bar{x}(1).
\end{array}\right.
\end{equation}
Using \eqref{nonsmoothstateqn}, \eqref{nonsmoothadjointqn} and via a simple calculation, we obtain
\begin{align*}
\bar{p}(t)=-\int_0^t(2-t)\vert\bar{u}(s)\vert dW(s)-\int_t^1((2-s)\vert\bar{u}(s)\vert +\bar{q}(s))dW(s).
\end{align*}
Since the adjoint process $ \bar{p}(t) $ is adapted to the filtration $ \mathcal{F}_t $, we must have 
\begin{equation} \label{4.9}
(2-t)\vert\bar{u}(t)\vert +\bar{q}(t)=0 \textit{ for all } t\in[0,1], \mathbb{P}\textit{-a.s.}
\end{equation}
The corresponding Hamiltonian is 
\begin{align*}
H(t,x,u,\bar{p}(t),\bar{q}(t))=\dfrac{1}{2}\bar{q}(t)\vert u\vert -x^2+\frac{1}{2}u^2.
\end{align*}
Since the problem satisfies assumptions \textbf{(A1)}-\textbf{(A4)}, by Theorem \ref{WNSMP} and (\ref{neceSMPcond}), we have
\begin{align}\label{nonsmoothcontra}
0 & \in  \partial_u\left( x(t)^2-\frac{1}{2}q(t)\vert u(t) \vert -\frac{1}{2}u(t)^2 \right) + N_{[-1,1]}(\bar{u}(t))\textit{ for all } t\in[0,1], \mathbb{P}\textit{-a.s}.
\end{align}
Consequently, on any nonzero measurable set $ E\in S $, we can only have the following five cases:
\begin{description}
\item[Case 1] $\bar{u}(t)=1 \Longrightarrow 0\in \left\{-\dfrac{1}{2}q(t)-1\right\}+[0,+\infty)$ which is compatible with the adaptedness condition (\ref{4.9}) $ \bar{q}(t)=t-2 $.
\item[Case 2] $\bar{u}(t)=-1 \Longrightarrow 0\in \left\{\dfrac{1}{2}q(t)+1\right\}+(-\infty,0]$ which is compatible with  (\ref{4.9}) $ \bar{q}(t)=t-2 $.
\item[Case 3] $\bar{u}(t)=0 \Longrightarrow 0\in \left[\dfrac{1}{2}q(t),-\dfrac{1}{2}q(t)\right]+\{0\}$ 
which is compatible with  (\ref{4.9}) $ \bar{q}(t)=0 $.
\item[Case 4] $\bar{u}(t)\in(0,1) \Longrightarrow 0\in \left\{-\dfrac{1}{2}q(t)-\bar{u}(t)\right\}+\{0\}$ which gives $q(t)=-2\bar{u}(t)<0$, a contradiction to (\ref{4.9}) $ \bar{q}(t)=(t-2)\bar{u}(t)>0$. 
\item[Case 5] $\bar{u}(t)\in(-1,0) \Longrightarrow 0\in \left\{\dfrac{1}{2}q(t)-\bar{u}(t)\right\}+\{0\}$ which gives  $q(t)=2\bar{u}(t)$, a contradiction to (\ref{4.9})
$ \bar{q}(t)=(2-t)\bar{u}(t) $.
\end{description}
Hence, the set of optimal candidates from Weak Necessary SMP consists of all the progressively measurable processes valued in the set $ \{-1,0,1\} $. However,  since the Hamiltonian is not concave, Theorem \ref{WSSMP} cannot be applied.  Substituting $ x(t)=\int_0^t \frac{1}{2}\vert u(s)\vert dW(s) $ into the cost functional and by simple calculations, we obtain
\begin{align*}
J(u) = -\dfrac{1}{4}E\int_0^1 t \vert u(t)\vert ^2 dt.
\end{align*}
Hence $J(u)$ reaches the minimum at $|\bar u(t)|=1$ a.s. for all $t$, which implies   there are infinitely many optimal controls  with any measurable combination of $ 1 $ and $ -1 $.
The optimal state process  is $\bar x(t)={1\over 2}W(t)$ and the adjoint processes are
$\bar p(t)=(t-2)W(t)$ and $\bar q(t)=t-2$ for all $t\in [0,1]$.
}\end{example}

\begin{remark}
{\rm
Example \ref{nonsmoothexample}  shows 
that the weak necessary SMP can find not only the optimal control for minimization problem (any progressively measurable process taking values $-1$ or 1) but also the optimal control for maximization problem (the unique progressively measurable process taking value 0), which is in  the same spirit of the necessary condition for finite dimensional optimization. 
The Hamiltonian in Example \ref{nonsmoothexample} is nonsmooth in control variable $u$, which is beyond any known literature on SMP. 
}
\end{remark}

\begin{remark}
{\rm 
When $dx(t)=u(t)dW(t)$ and $U=[0,1]$ and everything else is kept the same as that in Example \ref{nonsmoothexample}, the problem is the same as that of  \cite[Example 3.3.1]{yong.zhou:stochasticcontrols}. 
Theorem \ref{WNSMP}  can again be applied to find the optimal control candidate $\bar u(t)=0$. (We leave this to the reader to check.)
The Hamiltonian  is a convex function of $u$ and $\bar u(t)=0$ is a minimum point. This is the reason that \cite{peng:SMP} introduces the generalized Hamiltonian ${\cal H}$ which makes $\bar u(t)=0$ a maximum point. 
}
\end{remark}

\subsection{Examples: Weak SMP with Regime-Switching}
\begin{example}(Quadratic Loss Minimization) 
{\rm
Here we adopt the setting in \cite[Section 6]{donnelly:SMPregimeswitching}. Let $ (\Omega,\mathcal{F},\left\{\mathcal{F}_t\right\}_{0\leq t\leq T},\mathbb{P}) $ be a complete probability space on which defined a 1-dimensional standard Brownian motion $ W $ and a continuous time Markov chain $ \alpha $ valued in a finite state space $ I=\{1,\cdots d\} $ with generator matrix $ Q=\left[ q_{ij} \right]_{i,j\in I} $ and initial mode $ \alpha(0)=i_0 $. Assume that $ W $ and $ \alpha $ are independent of each other and the filtration is generated jointly by $ W $ and $ \alpha $. Consider a market consisting of one risk-free bank account $ S_0=\left\{ S_0(t),t\in[0,T] \right\} $ and one risky stock $ S_1=\left\{ S_1(t), t\in[0,T] \right\} $. The risk-free asset's price process satisfies the following equation:
\begin{align*}
\left\{
\begin{array}{l}
dS_0(t)=r(t,\alpha(t-))S_0(t)dt \ t\in [0,T]\\
S_0(0)=1,
\end{array}
\right.
\end{align*}
where the risk-free rate of return $ r(t,i) $ is a bounded deterministic function for $ i\in I $. The price process of the risky stock is given by
\begin{align*}
\left\{
\begin{array}{l}
dS_1(t)=S_1(t)\left\{ b(t,\alpha(t-))dt+\sigma(t,\alpha(t-))dW(t) \right\} \ t\in[0,T]\\
S_1(0)=S_1>0,
\end{array}
\right.
\end{align*}
where the mean rate of return $ b(t,i) $ and the volatility $ \sigma(t,i) $ are bounded non-zero deterministic functions for $ i\in I $. Define the market price of risk $ \theta(t,i)\equiv\sigma^{-1}(t,i)(b(t,i)-r(t,i)) $.

Consider an agent with an initial wealth $ x_0>0 $. Let the $ \mathcal{F}_t $ previsible real valued process $ u(t) $ be the amount allocated to the stock at time $ t $. Then the wealth process $ x $ can be written as
\begin{equation}\label{qlmweatheqn}
\left\{
\begin{array}{l}
dx(t)=\left[ r(t,\alpha(t-))x(t)+u(t)\sigma(t,\alpha(t-))\theta(t,\alpha(t-)) \right]dt+u(t)\sigma(t,\alpha(t-))dW(t)\\
x(0)=x_0.
\end{array}
\right.
\end{equation}

A portfolio $ u(\cdot) $ is said to be admissible, written as $ u(\cdot)\in \mathcal{U}_{ad} $ if it is $ \mathcal{F}_t $-previsible, square integrable and such that the regime switching SDE \eqref{qlmweatheqn} has a unique solution $ x(\cdot) $ corresponding to $ u(\cdot) $. In this case, we refer to $ (x(\cdot),u(\cdot)) $ as an admissible pair. The agent's objective is to find an admissible pair $ (\bar{x}(\cdot),\bar{u}(\cdot)) $ such that 
\begin{equation*}
E\left( \bar{x}(T)-d \right)^2=\inf_{u\in\mathcal{U}_{ad}}E(x(T)-d)^2
\end{equation*}
for some fixed constant $ d\in \mathbb{R} $.

To solve this problem, first we find potential optimal candidate using Theorem \ref{WNSMP}. Suppose that $ (\bar{x}(\cdot),\bar{u}(\cdot)) $ is an optimal pair. Then the corresponding adjoint equation is 
\begin{equation}\label{qlmadjoint}
\left\{
\begin{array}{l}
dp(t)=-r(t,\alpha(t-))p(t)dt+q(t)dW(t)+s(t)\bullet dQ(t) \ t\in[0,T)\\
-p(T)=2\bar{x}(T)-2d.
\end{array}
\right.
\end{equation}

To find a solution $ (\bar{p},\bar{q},\bar{s}) $ to \eqref{qlmadjoint}, we try a process
\begin{equation}\label{qlmpbar}
\bar{p}(t)=\phi(t,\alpha(t))\bar{x}(t)+\psi(t,\alpha(t)),
\end{equation}
where $ \phi(t,i) $ and $ \psi(t,i) $ are deterministic smooth functions with terminal conditions 
\begin{align*}
\phi(T,i)=2 \text{ and } \psi(T,i)=-2d \text{ for } \forall i\in I.
\end{align*}
Applying Ito's formula to \eqref{qlmpbar} and comparing coefficients with \eqref{qlmadjoint} leads to
\begin{align}\label{qlmfirst}
&\begin{array}{c}
-r(t,\alpha(t-))\bar{p}(t)=\sum_{i=1}^d\mathcal{X}[\alpha(t-)=i]\bigg\{ \bar{x}(t)\left( \phi(t,i)r(t,i)+\Delta\phi(t,i)\right)\\ +\phi(t,i)\bar{u}(t)\sigma(t,i)\theta(t,i)+\Delta\psi(t,i)\bigg\},
\end{array}
\\\label{qlmsecond}
&\bar{q}(t)=\phi(t,\alpha(t-))\sigma(t,\alpha(t-))\bar{u}(t),\\
&\bar{s}_{ij}(t)=\bar{x}(t)(\phi(t,j)-\phi(t,i))+(\psi(t,j)-\psi(t,i)),
\end{align}
where for $ \varphi=\phi \text{ and }\psi $, denote by
\begin{equation*}
\Delta\varphi(t,i)\triangleq\varphi_t(t,i)+\sum_{j=1}^d q_{ij}(\varphi(t,j)-\varphi(t,i)).
\end{equation*}
The Hamiltonian is given by
\begin{equation}\label{qlmHamiltonian}
\begin{array}{r}
H(t,x,u,\alpha ,p,q)=r(t,\alpha)xp+u\sigma(t,\alpha)q+u\sigma(t,\alpha)\theta(t,\alpha)p.
\end{array}
\end{equation}
By Theorem \ref{WNSMP}, we have
\begin{equation*}
0\in\partial_u(-H)(t,\bar{x}(t),\bar{u}(t),\alpha(t-),\bar{p}(t),\bar{q}(t)).
\end{equation*}
Since $ H $ is a linear function of $ \bar{u} $, we must have
\begin{equation}\label{qlmqbar}
\bar{q}(t)=-\theta(t,\alpha(t-))\bar{p}(t).
\end{equation}
Substituting \eqref{qlmqbar} and \eqref{qlmpbar} into \eqref{qlmsecond} we obtain
\begin{equation}\label{qlmubar}
\bar{u}(t)=-\sigma^{-1}(t,\alpha(t-))\theta(t,\alpha(t-))(\bar{x}(t)+\phi^{-1}(t,\alpha(t-))\psi(t,\alpha(t-))).
\end{equation}
Substituting \eqref{qlmpbar} and \eqref{qlmubar} into \eqref{qlmfirst} leads to the following two differential equations
\begin{align}
\phi(t,i)(2r(t,i)-\vert \theta(t,i) \vert^2)+\Delta\phi(t,i)=0,\\
\psi(t,i)(r(t,i)-\vert \theta(t,i) \vert^2)+\Delta\psi(t,i)=0,
\end{align}
with terminal conditions
\begin{equation*}
\phi(T,i)=2 \text{ and } \psi(T,i)=-2d \text{ for } \forall i\in I.
\end{equation*}
It can be showed that the solutions are
\begin{align}\label{qlmphi}
\phi(t,i)=2E\bigg\{ \exp\bigg[ \int_t^T \left( 2r(s,\alpha(s))-\vert \theta(s,\alpha(s)) \vert^2\right) ds \bigg]\bigg| \alpha(t)=i \bigg\},\\\label{qlmpsi}
\psi(t,i)=-2dE\bigg\{ \exp\bigg[ \int_t^T \left( r(s,\alpha(s))-\vert \theta(s,\alpha(s)) \vert^2\right) ds \bigg]\bigg| \alpha(t)=i \bigg\}.
\end{align}
Detailed proofs can be found in \cite[Section 6]{donnelly:SMPregimeswitching} and \cite[Section 5]{zhangelliottsiu:SMPregimeswitchingjump}. Substituting \eqref{qlmphi} and \eqref{qlmpsi} back into \eqref{qlmubar} gives the potential optimal portfolio $ \bar{u} $ and the corresponding potential optimal wealth process $ \bar{x} $.

To verify the optimality of our candidate solution, we apply Theorem \ref{WSSMP}. Since \textbf{(A1)}-\textbf{(A4)} are satisfied, $ h(x(T),\alpha(T))\equiv (x(T)-d)^2 $ is convex and the Hamiltonian \eqref{qlmHamiltonian} is concave, we conclude that $ (\bar{x}(\cdot),\bar{u}(\cdot)) $ is indeed the optimal pair.
}
\end{example}
\begin{remark}
{\rm
Notice that in this case $ h $ is a convex function and the Hamiltonian is concave. Therefore, one can skip the necessary conditions and use a sufficient stochastic maximum principle of Pontryagin's type directly to find the optimal portfolio process. Detailed steps can be found in \cite[Section 6]{donnelly:SMPregimeswitching} and \cite[Section 5]{zhangelliottsiu:SMPregimeswitchingjump}. However, we follow a different approach here. Instead of using the sufficient SMP directly, we first find all admissible portfolios satisfying the necessary conditions stated in Theorem \ref{WNSMP}. Combining that with the adjoint equations, we then construct candidate optimal portfolio $ \bar{u} $. Finally, an application of Theorem \ref{WSSMP} confirms that $ \bar{u} $ is indeed the optimal portfolio. This approach is particularly useful when the conditions for sufficient SMP are not satisfied, e.g. nonconcave Hamiltonian.
}
\end{remark}

\begin{example}(Nonconcave Hamiltonian)
{\rm
Let $ (\Omega,\mathcal{F},\lbrace\mathcal{F}_t\rbrace_{0\leq t\leq 1},\mathbb{P}) $ be a complete probability space. Consider a one-dimensional Brownian motion $ W $ and a continuous time finite state Markov chain $ \lbrace\alpha(t)\vert t\in[0,1]\rbrace $ with state space $ I:=\lbrace 1,2 \rbrace $ and generator matrix $ Q:=[q_{ij}]_{i,j=1,2} $. Assume $ q_{12}+q_{21}\geq 2 $. Consider the following Markovian regime-switching control system
\begin{equation*}
\left\{ \begin{array}{ll}
dx(t)=u(t)dW(t), \ t\in[0,1]\\
x(0)=0
\end{array}
\right.
\end{equation*}
with the control domain $ U=[0,1] $ and the cost functional
\begin{align*}
J(u(\cdot))=E\bigg[ \int_0^1 \left( A(\alpha(t))u(t)+B(\alpha(t))u^2(t)+C(\alpha(t))x^2(t)\right) dt+D(\alpha(1))x^2(1) \bigg],
\end{align*}
where functions $ A,B,C,D: I\rightarrow\mathbb{R} $ satisfy
\begin{align*}
\left\{ \begin{array}{ll}
A(1)=-1\\
A(2)=0
\end{array},
\right.
\left\{ \begin{array}{ll}
B(1)=0\\
B(2)=-\frac{1}{2}
\end{array},
\right.
\left\{ \begin{array}{ll}
C(1)=0\\
C(2)=1
\end{array},
\right.
\left\{ \begin{array}{ll}
D(1)=\frac{1}{2}\\
D(2)=1
\end{array}.
\right.
\end{align*}

To solve this problem, first we find potential optimal solutions using Theorem \ref{WNSMP}. Suppose $ (\bar{x}(\cdot),\bar{u}(\cdot)) $ is an optimal pair. Then the corresponding adjoint equation is 
\begin{equation}\label{egadj}
\left\{ \begin{array}{ll}
d\bar{p}(t)=2C(\alpha(t))\bar{x}(t)dt+\bar{q}(t)dW(t)+\bar{s}(t)\bullet dQ(t)\\
\bar{p}(1)=-2D(\alpha(1))\bar{x}(1)
\end{array}.
\right.
\end{equation}
To find a solution $ (\bar{p},\bar{q},\bar{s}) $ to \eqref{egadj}, we try a process $ \bar{p}(t)=\phi(t,\alpha(t))\bar{x}(t) $, where $ \phi(t,i),\ i=1,2 $ are deterministic functions satisfying the terminal condition $ \phi(1,i)=-2D(i),\ i=1,2 $. Applying Ito's formula 
\begin{equation}\label{egito}
\begin{array}{c}
\displaystyle d\bar{p}(t)=\sum_{i=1}^2 \mathcal{X}[\alpha(t-)=i]\bigg\lbrace \bar{x}(t)\bigg( \phi_t(t,i)+\sum_{j=1}^2q_{ij}\left( \phi(t,j)-\phi(t,i) \bigg) \right) \bigg\rbrace dt \\
\displaystyle+\phi(t,\alpha(t))\bar{u}(t)dW(t)+\sum_{i\neq j}\bar{x}(t)\left( \phi(t,j)-\phi(t,i) \right)dQ_{ij}.
\end{array}
\end{equation}
Comparing the coefficients of \eqref{egadj} and \eqref{egito} leads to
\begin{align}\label{eglineareqn}
2C(\alpha(t))\bar{x}(t)=&\sum_{i=1}^2\mathcal{X}[\alpha(t-)=i]\bigg\lbrace \bar{x}(t)\bigg( \phi_t(t,i)+\sum_{j=1}^2q_{ij}(\phi(t,j)-\phi(t,i)) \bigg) \bigg\rbrace \\\label{egqcond}
\bar{q}(t)=&\phi(t,\alpha(t))\bar{u}(t)\\
\bar{s}_{ij}(t)=&\bar{x}(t)(\phi(t,j)-\phi(t,i))
\end{align}
As \eqref{eglineareqn} is a linear equation of $ \bar{x}(t) $, we guess that the coefficient of $ \bar{x}(t) $ vanishes at optimality and obtain the following two equations
\begin{align}\label{egode}
\left\{
\begin{array}{c}
\displaystyle-\phi_t(t,1)-q_{12}(\phi(t,2)-\phi(t,1))=0,\\
2-\phi_t(t,2)-q_{21}(\phi(t,1)-\phi(t,2))=0,
\end{array}
\right.
\end{align}
with terminal conditions
\begin{equation}\label{egterminalcond}
\phi(1,1)=-1 \text{ and } \phi(1,2)=-2.
\end{equation}
Solving the system of ordinary differential equations \eqref{egode} with terminal conditions \eqref{egterminalcond} gives
\begin{align*}
\left\{
\begin{array}{c}
\phi(t,1)=\dfrac{q_{12}(q_{12}+q_{21}-2)}{(q_{12}+q_{21})^2}\left( e^{(q_{12}+q_{21}-2)(t-1)}-1 \right)+\dfrac{2q_{12}}{q_{12}+q_{21}}(t-1)-1\\
\phi(t,2)=\dfrac{q_{21}(q_{12}+q_{21}-2)}{(q_{12}+q_{21})^2}\left( 1-e^{(q_{12}+q_{21}-2)(t-1)} \right)+\dfrac{2q_{12}}{q_{12}+q_{21}}(t-1)-2
\end{array}
\right.
\end{align*}
Moreover, since $ q_{12}+q_{21}\geq 2 $ and $ \frac{q_{21}(q_{12}+q_{21}-2)}{(q_{12}+q_{21})^2}<1 $, we obtain that $\phi(t,i)<-1, \ \forall t\in[0,1),i\in I $.
Consider the Hamiltonian 
\begin{align}\label{egHam}
\left\{
\begin{array}{l}
H(t,x,u,1,p,q)=u+qu\\
H(t,x,u,2,p,q)=\frac{1}{2}u^2-x^2+uq.
\end{array}
\right.
\end{align}
By Theorem \ref{WNSMP}, we have
\begin{align*}
0\in\partial_u(-H)(t,\bar{x}(t),\bar{u}(t),\alpha(t-),\bar{p}(t),\bar{q}(t))+N_U(\bar{u}(t)) \ \forall t\in[0,1], \ \mathbb{P}-\text{a.s.}
\end{align*}
Consequently on any nonzero measurable set $ E\in S=\Omega\times[0,1) $ such that $ \alpha(t-)=1 $, we can only have three cases:
\begin{description}
\item[Case 1]: $\bar{u}(t)=0\Rightarrow N_{[0,1]}(\bar{u}(t))=(-\infty,0] \text{ and } \bar{q}(t)+1\leq 0.$ \\According to \eqref{egqcond}, $ \phi(t,1)\bar{u}(t)\leq-1, \bar{u}(t)\geq-\frac{1}{\phi(t,1)}>0 $, contradiction.
\item[Case 2]: $ \bar{u}(t)=1\Rightarrow N_{[0,1]}(\bar{u}(t))=[0,+\infty) \text{ and } \bar{q}(t)+1\geq 0. $\\According to \eqref{egqcond}, $ \phi(t,1)\bar{u}(t)\geq-1, \bar{u}(t)\leq-\frac{1}{\phi(t,1)}<1 $, contradiction.
\item[Case 3]: $ 0<\bar{u}(t)<1 \Rightarrow N_{[0,1]}(\bar{u}(t))=\{0\} \text{ and } \bar{q}(t)=-1. $\\According to \eqref{egqcond}, $ \bar{u}(t)=-\frac{1}{\phi(t,1)}\in(0,1). $
\end{description}
Hence we conclude that $ \bar{u}(t)=-\frac{1}{\phi(t,1)} $ provided $ \alpha(t-)=1 $.
Similarly on any non-zero measurable set $ E\in S=\Omega\times[0,1) $ such that $ \alpha(t-)=2 $, we can only have three cases:
\begin{description}
\item[Case 1]: $\bar{u}(t)=1\Rightarrow N_{[0,1]}(\bar{u}(t))=[0,+\infty) \text{ and } \bar{q}(t)+\bar{u}(t)\geq 0.$ \\According to \eqref{egqcond}, $ (\phi(t,2)+1)\bar{u}(t)\geq 0, \bar{u}(t)\leq\frac{1}{\phi(t,2)+1}<0 $, contradiction.
\item[Case 2]: $ \bar{u}(t)\in(0,1)\Rightarrow N_{[0,1]}(\bar{u}(t))=\{0\} \text{ and } \bar{q}(t)+\bar{u}(t)=0. $\\According to \eqref{egqcond}, $ (\phi(t,2)+1)\bar{u}(t)=0, \bar{u}(t)=0 $, contradiction.
\item[Case 3]: $ \bar{u}(t)=0 \Rightarrow N_{[0,1]}(\bar{u}(t))=(-\infty,0] \text{ and } \bar{q}(t)+\bar{u}(t)\leq 0. $\\According to \eqref{egqcond}, $ (\phi(t,2)+1)\bar{u}(t)\leq 0, \bar{u}(t)=0. $
\end{description}
Hence we must have $ \bar{u}(t)=0 $ provided $ \alpha(t-)=2 $.\\
In conclusion, the potential optimal control can be written as
\begin{equation}\label{egoptimalcontrol}
\bar{u}(t)=-\frac{1}{\phi(t,1)}\mathcal{X}[\alpha(t-)=1].
\end{equation}

Let us now show that $ (\bar{x}(\cdot),\bar{u}(\cdot)) $ is indeed an optimal pair. Notice that the Hamiltonian \eqref{egHam} is not concave function of $ u $, and therefore Theorem \ref{WSSMP} cannot be applied. We have to use other methods to check the optimality of $ \bar{u} $. Given any admissible pair $ (x(\cdot),u(\cdot)) $, apply  Ito's formula on  $ \phi(t,\alpha(t))x^2(t) $ and write it in integral form,
\begin{equation}\label{egitointegral}
E\left[ \phi(1,\alpha(1))x^2(1) \right]=E\bigg[ \int_0^1 x^2(t)\bigg( \phi_t(t,\alpha(t))+\sum_{j=1}^2q_{ij}(\phi(t,j)-\phi(t,\alpha(t))) \bigg)+\phi(t,\alpha(t))u^2(t)dt \bigg].
\end{equation}
Substituting \eqref{egitointegral} into the cost functional and according to \eqref{eglineareqn},
\begin{align*}
J(u(\cdot))&=E\bigg[ \int_0^1 \left( A(\alpha(t))u(t)+B(\alpha(t))u^2(t)-\frac{1}{2}\phi(t,\alpha(t))u^2(t)\right) dt \bigg]\\
           &=E\bigg[ \int_{S_1}\left( -u(t)-\frac{1}{2}\phi(t,1)u^2(t)\right)dt+\int_{S_2}-\frac{1}{2}(1+\phi(t,2))u^2(t) dt \bigg]\\
           &=E\bigg[ \int_{S_1}\left(-\frac{1}{2}\phi(t,1)\left( u(t)+\frac{1}{\phi(t,1)}\right)^2+\frac{1}{2\phi(t,1)}\right) dt+\int_{S_2}-\frac{1}{2}(1+\phi(t,2))u^2(t)dt \bigg],
\end{align*}
where $ S_1\equiv\left\{ t\vert t\in[0,1] \text{ such that } \alpha(t-)=1 \right\} $ and $ S_2\equiv\left\{ t\vert t\in[0,1] \text{ such that } \alpha(t-)=2 \right\}=[0,1]\backslash S_1 $. Since $ \phi(t,1) \leq 1 $ and $ \phi(t,2)<1 \forall t\in[0,1] $, the minimum value of the cost functional is achieved at $ \bar{u} $ defined in \eqref{egoptimalcontrol}.
}
\end{example}

\section{Preliminary Results}
In this section, we introduce some preliminary results, which will be useful in the sequel. Hereafter, $ K $ represents a generic constant.
\subsection{Clarke's Generalized Gradient and Normal Cone}\label{ClarkeIntro}
In this subsection we recall some  basic concepts and  properties in nonsmooth analysis and optimization, which are needed in the statement and proof of the main results (Theorems \ref{WNSMP} and \ref{WSSMP}).    Clarke's generalized gradient is first introduced to the  finite dimensional space in \cite{clarke:gengradapp}  and then  extended to the infinite dimensional space in \cite{clarke:shadowprices, clarke:gengradfunctional}
and \cite{aubin:setvaluedanalysis}. Interested readers may refer to \cite{clarke:optimization}  for a detailed and complete treatment of the topic.

\begin{definition}(Generalized directional derivative)
Let $ C $ be an open subset of a Banach space $ X $, and let a function $ f:C\longrightarrow \mathbb{R} $ be given. We suppose that $ f $ is Lipschitz on $ C $. The generalized directional derivative of $ f $ at $ x $ in the direction $ v $, denoted $ f^o(x;v) $, is given by
\begin{equation*}
f^o(x;v)=\limsup_{y\rightarrow_C x, \lambda \downarrow 0} \dfrac{f(y+\lambda v)-f(y)}{\lambda}.
\end{equation*}
\end{definition}

\begin{definition}(Clarke's generalized gradient)
Let $ X^* $ denote the dual of $ X $ and $ \langle \cdot , \cdot \rangle $ be the duality pairing between $ X $ and $ X^* $. The generalized gradient of $ f $ at $ x $, denoted $ \partial{f}(x) $, is the set of all $ \zeta $ in $ X^* $ satisfying
\begin{equation*}
f^o(x;v)\geq \langle v, \zeta\rangle \textit{ for } \forall v\in X. 
\end{equation*}
\end{definition}

\begin{theorem}\label{minmaxgradient}
If $ f $ attains a local minimum or maximum at x, then $ 0\in \partial{f}(x) $.
\end{theorem}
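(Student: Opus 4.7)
The goal is to verify the defining inequality $f^o(x;v) \ge \langle v,0\rangle = 0$ for every direction $v \in X$, since by definition this is precisely what it means for $0 \in X^*$ to lie in $\partial f(x)$. So the whole task reduces to showing $f^o(x;v)\ge 0$ for all $v$, in each of the two cases.

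\medskip
\noindent\textbf{Case of a local minimum.} The plan is to exploit the freedom in the limsup by restricting to the particular curve $y=x$, $\lambda\downarrow 0$. Since the limsup over a subset of approaches is dominated by the full limsup,
\[
f^o(x;v) \;\ge\; \limsup_{\lambda\downarrow 0}\frac{f(x+\lambda v)-f(x)}{\lambda}.
\]
For $\lambda$ small enough that $x+\lambda v$ stays in the neighborhood on which $f(x)$ is minimal, the numerator is nonnegative, so the right-hand side is $\ge 0$, giving $f^o(x;v)\ge 0$.

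\medskip
\noindent\textbf{Case of a local maximum.} Here the trick is to choose a different approaching sequence: $y_\lambda := x-\lambda v$ with $\lambda\downarrow 0$. Then $y_\lambda\to x$ in $C$ (using that $C$ is open so $y_\lambda\in C$ for small $\lambda$), and crucially $y_\lambda+\lambda v = x$. Therefore
\[
\frac{f(y_\lambda+\lambda v)-f(y_\lambda)}{\lambda} \;=\; \frac{f(x)-f(x-\lambda v)}{\lambda},
\]
and the local maximality of $x$ forces $f(x)\ge f(x-\lambda v)$ for $\lambda$ small, so the quotient is nonnegative. Taking limsup yields $f^o(x;v)\ge 0$. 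Alternatively, one may invoke the symmetry property $\partial(-f)(x)=-\partial f(x)$ and reduce to the minimum case applied to $-f$, but the direct argument is equally short and avoids relying on that identity.

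\medskip
\noindent\textbf{Remarks on obstacles.} There is no real obstacle here; the Lipschitz hypothesis is used only implicitly (to guarantee that $f^o(x;v)$ is finite and that the limsup is meaningful), and openness of $C$ guarantees that the perturbed points $x+\lambda v$ and $x-\lambda v$ eventually lie in $C$. The only conceptual point worth flagging is that the limsup in the definition of $f^o$ is a joint limsup in $(y,\lambda)$; freely specializing $y$ to a convenient sequence is what makes the two one-line arguments go through.
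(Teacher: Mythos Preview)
Your proof is correct and is the standard argument (essentially the one in Clarke's \emph{Optimization and Nonsmooth Analysis}, Proposition~2.3.2). Note, however, that the paper does not actually supply its own proof of this theorem: it is stated in Section~5.1 as a background result from nonsmooth analysis, with the reader referred to \cite{clarke:optimization} for details. So there is no ``paper's proof'' to compare against; your write-up simply fills in what the paper leaves as a citation, and does so cleanly.
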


Theorem \ref{minmaxgradient} is only valid in the case where $ C $ is open. When the function is defined on a general non-empty subset of $ X $, we need to introduce the so-called distance function and the concept of Clarke's tangent cone and normal cone.

\begin{definition}(Distance function)
Let $ X $ be a Banach space and $ C $ be a non-empty subset of $ X $. The distance function $ d_C:X\rightarrow \mathbb{R} $ is defined as
\begin{equation*}
d_C(x)=\inf\{\| x-c \|: c\in C\}.
\end{equation*}
\end{definition}

\begin{theorem}
The function $ d_C $ satisfies the following global Lipschitz condition on $ X $
\begin{equation*}
\vert d_C(x)-d_C(y) \vert \leq \| x-y \|.
\end{equation*}
\end{theorem}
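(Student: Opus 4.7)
The plan is to derive the Lipschitz bound directly from the triangle inequality in the Banach space $X$, combined with the infimum definition of $d_C$. No structural assumption on $C$ beyond non-emptiness is needed, which is consistent with the fact that the statement is supposed to hold globally.

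First I would fix arbitrary $x,y \in X$ and an arbitrary $c \in C$. By the triangle inequality in the norm of $X$,
\begin{equation*}
\|x - c\| \leq \|x - y\| + \|y - c\|.
\end{equation*}
Since this holds for every $c \in C$, I would take the infimum over $c \in C$ on the right-hand side (the quantity $\|x-y\|$ is independent of $c$ and can be pulled out of the infimum), obtaining
\begin{equation*}
d_C(x) = \inf_{c\in C}\|x-c\| \leq \|x - y\| + \inf_{c\in C}\|y - c\| = \|x - y\| + d_C(y),
\end{equation*}
i.e.\ $d_C(x) - d_C(y) \leq \|x - y\|$.

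Next, by symmetry (interchanging the roles of $x$ and $y$ in the argument above), one likewise gets $d_C(y) - d_C(x) \leq \|y - x\| = \|x - y\|$. Combining the two inequalities gives
\begin{equation*}
|d_C(x) - d_C(y)| \leq \|x - y\|,
\end{equation*}
which is the desired global Lipschitz estimate, with Lipschitz constant $1$.

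There is no substantive obstacle here: the only mild subtlety is the standard fact that $\inf_{c}\bigl(a + g(c)\bigr) = a + \inf_{c} g(c)$ when $a$ does not depend on $c$, and that $C$ is non-empty so that $d_C$ is finite-valued. Since $C \neq \emptyset$ by hypothesis, $d_C(x), d_C(y) < \infty$, so the subtraction is well-defined and the argument is complete.
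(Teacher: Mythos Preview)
Your proof is correct and is the standard argument via the triangle inequality. The paper itself states this theorem without proof, treating it as a well-known fact from nonsmooth analysis (cf.\ Clarke \cite{clarke:optimization}), so there is nothing further to compare.
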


\begin{definition}(Adjacent cone)\label{ajacentcone}
Let $\bar{C}$ be the closure of C and $ x\in \bar{C} $. The adjacent cone to C at x, denoted as $ T_C^b(x) $, is defined by
\begin{equation*}
T^b_C(x):=\{ v\vert \lim_{h\rightarrow 0^+} d_C(x+hv)/h = 0 \}.
\end{equation*}
\end{definition}

\begin{definition}(Tangent cone)
Suppose $ x\in C $. A vector $ v $ in $ X $ is a tangent to $ C $ at x provided $ d_C^o(x;v)=0 $. The tangent cone to C at x, denoted as $ T_C(x) $, is the set of all tangents to C at x.
\end{definition}

In addition, when the set C is convex, it can be proved that the adjacent and tangent cones coincide, see   \cite[Proposition 4.2.1]{aubin:setvaluedanalysis}.

\begin{theorem}\label{AdjacentTangent}
Assume that C is convex. Then $T_C(x) = T_C^b(x)$.
\end{theorem}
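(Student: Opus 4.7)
The plan is to prove the two inclusions $T_C(x) \subset T_C^b(x)$ and $T_C^b(x) \subset T_C(x)$ separately, with convexity entering only in the second. For the first, if $v \in T_C(x)$, i.e., $d_C^o(x;v) = 0$, I would specialize the double limsup in the definition of the Clarke directional derivative to the constant sequence $y \equiv x$, giving $\limsup_{\lambda \downarrow 0}(d_C(x+\lambda v) - d_C(x))/\lambda \leq 0$. Since $x \in \bar C$ forces $d_C(x)=0$ and $d_C \geq 0$ everywhere, the limsup of non-negative quantities is squeezed to $0$, hence $v \in T_C^b(x)$. This direction uses neither convexity of $C$ nor anything beyond the definitions.

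For the reverse inclusion I would invoke two standard facts about $d_C$ when $C$ is convex: first, that $d_C$ is itself a convex function on $X$ (a short midpoint argument using $t c_1 + (1-t) c_2 \in C$ together with the triangle inequality), and second, that convexity then implies the difference quotient $h \mapsto (d_C(y + hv) - d_C(y))/h$ is non-decreasing in $h > 0$ for each fixed $y, v$. Given $v \in T_C^b(x)$ and a fixed $h > 0$, monotonicity yields $(d_C(y + \lambda v) - d_C(y))/\lambda \leq (d_C(y + hv) - d_C(y))/h$ for all $0 < \lambda \leq h$. Letting $y \to x$ and using continuity of $d_C$ (which follows from its global $1$-Lipschitz property) gives
\[
d_C^o(x;v) \;=\; \limsup_{y \to x,\, \lambda \downarrow 0} \frac{d_C(y+\lambda v) - d_C(y)}{\lambda} \;\leq\; \frac{d_C(x+hv) - d_C(x)}{h}.
\]
Sending $h \downarrow 0$ and using $v \in T_C^b(x)$ together with $d_C(x) = 0$ drives the right-hand side to $0$, so $d_C^o(x;v) \leq 0$; combined with the lower bound $d_C^o(x;v) \geq 0$ (from the same $y \equiv x$ specialization and $d_C \geq 0$), we conclude $d_C^o(x;v) = 0$, hence $v \in T_C(x)$.

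The main subtlety, and the only genuinely nontrivial step, is the separation-of-variables move that replaces the joint $(y,\lambda)$-limsup by a quantity depending only on $h$. A naive attempt based on the $1$-Lipschitz property alone would introduce a term $\|y - x\|/\lambda$ which can blow up along sequences where $\lambda \downarrow 0$ much faster than $\|y - x\| \to 0$. The monotonicity of the convex difference quotient sidesteps this issue cleanly: it lets us freeze $h$ while first taking the $y$-limit, and only afterwards send $h \downarrow 0$. This convex-analysis trick is the heart of the argument, and everything else is routine verification of definitions.
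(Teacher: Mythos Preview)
Your argument is correct. The inclusion $T_C(x)\subset T_C^b(x)$ follows from specialising $y\equiv x$ together with non-negativity of $d_C$, and the reverse inclusion is obtained exactly as you indicate, via convexity of $d_C$ and the monotonicity of convex difference quotients; the only point that deserves a word of care is the order of limits in the joint $\limsup$, but since for any sequence $(y_n,\lambda_n)\to(x,0)$ one eventually has $\lambda_n<h$ and then passes to the limit in $y_n$ by continuity, your bound $d_C^o(x;v)\le d_C(x+hv)/h$ is justified.

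As for comparison: the paper does not give its own proof of this theorem at all---it simply records the statement and refers the reader to \cite[Proposition~4.2.1]{aubin:setvaluedanalysis}. Your write-up therefore supplies what the paper omits. The route you take (convexity of $d_C$ $\Rightarrow$ regularity of $d_C$ in Clarke's sense $\Rightarrow$ the generalised directional derivative agrees with the ordinary one) is the standard one and is essentially the argument behind the cited result in Aubin--Frankowska, so there is no substantive methodological difference to report.
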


\begin{definition}(Normal cone)
Let $ x\in C $. The normal cone to C at x is defined by the polarity with $ T_C(x) $:
\begin{equation*}
N_C(x)=\{ \xi \in X^*: \langle \xi,v \rangle \leq 0 \textit{ for all } v\in T_C(x)\}.
\end{equation*}
\end{definition}

The following necessary optimality condition is proved in \cite[page 52 Corollary]{clarke:optimization}.

\begin{theorem} \label{NeceOptCond}
Assume that f is Lipschitz near x and attains a minimum over C at x. Then $ 0 \in \partial f(x)+N_C(x) $.
\end{theorem}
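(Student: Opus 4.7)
The plan is to reduce the constrained optimality condition to the unconstrained one (Theorem \ref{minmaxgradient}) via an exact penalization by the distance function, then peel off the penalty using a sum rule for Clarke's generalized gradient together with the fact that (scalar multiples of) $\partial d_C(x)$ sit inside $N_C(x)$.

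First I would establish exact penalization. Let $K$ be a Lipschitz constant of $f$ on a neighborhood $V$ of $x$, and fix any $K' > K$. I claim that the function $g := f + K'\, d_C$ attains an unconstrained local minimum at $x$ on $V$. For $y \in V$, if $y \in C$ then $d_C(y) = 0$ and $g(y) = f(y) \geq f(x) = g(x)$ because $x$ minimizes $f$ over $C$. If $y \notin C$, for any $\varepsilon > 0$ pick $c \in C$ with $\|y-c\| \leq d_C(y) + \varepsilon$; shrinking $V$ if necessary we may assume $c \in V$. Then
\begin{equation*}
g(y) = f(y) + K' d_C(y) \geq f(c) - K\|y-c\| + K' d_C(y) \geq f(x) + (K'-K)d_C(y) - K\varepsilon,
\end{equation*}
and letting $\varepsilon \downarrow 0$ gives $g(y) \geq f(x) = g(x)$.

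Next, by Theorem \ref{minmaxgradient}, the unconstrained local minimum yields $0 \in \partial g(x)$. Applying the sum rule for Clarke's generalized gradient of two functions Lipschitz near $x$, which is standard and can be taken from Clarke's monograph, one gets
\begin{equation*}
\partial g(x) = \partial(f + K' d_C)(x) \subseteq \partial f(x) + K' \partial d_C(x).
\end{equation*}
Finally I would verify $K' \partial d_C(x) \subseteq N_C(x)$. For any $v \in T_C(x)$ one has $d_C^{o}(x;v) = 0$ by the definition of the tangent cone, so every $\zeta \in \partial d_C(x)$ satisfies $\langle \zeta, v\rangle \leq d_C^{o}(x;v) = 0$ for all $v \in T_C(x)$, i.e. $\zeta \in N_C(x)$. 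Since $N_C(x)$ is a cone, $K'\zeta \in N_C(x)$. Combining these inclusions yields $0 \in \partial f(x) + N_C(x)$.

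The main obstacle I anticipate is justifying the exact penalization inequality cleanly (choosing $K'$ and controlling the approximating point $c$ inside the Lipschitz neighborhood $V$); the sum rule and the inclusion $\partial d_C(x) \subseteq N_C(x)$ are routine once the local-minimum reduction is in place.
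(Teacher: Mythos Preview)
Your argument is correct and is precisely the standard exact-penalization proof due to Clarke. However, note that the paper does not actually supply its own proof of this theorem: it is stated in the preliminaries as a background result with the remark ``The following necessary optimality condition is proved in \cite[page 52 Corollary]{clarke:optimization}.'' Your proposal faithfully reproduces that argument (penalize by $K' d_C$, invoke Theorem~\ref{minmaxgradient} at the unconstrained minimum, apply the sum rule, and absorb $K'\partial d_C(x)$ into $N_C(x)$), so there is nothing to compare---you have simply filled in what the paper outsources to Clarke's monograph.

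One minor technical point worth tightening: in the exact-penalization step for $y\notin C$, you need to guarantee the approximating point $c\in C$ lies in the Lipschitz neighborhood of $f$. The clean way is to first choose a ball $B(x,\delta)$ on which $f$ is $K$-Lipschitz, then restrict $y$ to $B(x,\delta/3)$; since $x\in C$ one has $d_C(y)\le\|y-x\|<\delta/3$, so any $c$ with $\|y-c\|\le d_C(y)+\varepsilon$ (with $\varepsilon<\delta/3$) automatically satisfies $\|c-x\|<\delta$. This removes the vague ``shrinking $V$ if necessary'' and makes the inequality chain rigorous.
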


\subsection{Markovian Regime-Switching SDE and BSDE}
In this subsection, we establish the existence and uniqueness theorem of solutions to regime switching SDEs of the form \eqref{stateeqn}. First, we give the definition of the solution.
\begin{definition}\cite[Definition 3.11]{maoyuan:SDEswitching}
An $ \mathbb{R}^n $ valued stochastic process $ \{ x(t) \}_{0\leq t\leq T} $ is called a solution of equation \eqref{stateeqn} if it has the following properties:
\begin{enumerate}
\item $ \{ x(t) \} $ is continuous and $ \mathcal{F}_t $-adapted;
\item $ \{ b(t,x(t),u(t),\alpha(t-)) \}\in L^1_{\mathcal{F}}(S;\mathbb{R}^n) $ and $ \{ \sigma(t,x(t),u(t),\alpha(t-))\in L^2_{\mathcal{F}}(S;\mathbb{R}^{n\times m}) \} $;
\item for any $ t\in[0,T] $, equation 
\begin{align*}
x(t)=x_0+\int_0^t b(s,x(s),u(s),\alpha(s-))ds+\int_0^t \sigma(t,x(s),u(s),\alpha(s-))dW(s)
\end{align*}
holds with probability 1.
\end{enumerate}
A solution $ \{ x(t) \} $ is said to be unique if any other solution $ \{ \tilde{x}(t) \} $ is indistinguishable from $ \{ x(t) \} $, that is 
\begin{align*}
\mathbb{P}\{ x(t)=\tilde{x}(t) \mbox{ for all }0\leq t\leq T \}=1
\end{align*}
\end{definition}
Using the same method as in \cite[Chapter 3, Theorem 3.13]{maoyuan:SDEswitching}, the existence and uniqueness of solutions to regime-switching SDE of type \eqref{stateeqn} can be proved.
\begin{theorem}\label{exisuniqSDE}
Under assumption \textbf{(A1)}, given control $ u\in L^4(S;\mathbb{R}^k) $, there exists a unique solution $ x(t) $ to equation \eqref{stateeqn} and moreover, 
\begin{equation}\label{SDEEstimate}
E\left( \sup_{0\leq t\leq T}\vert x(t) \vert^2 \right)\leq K\left( 1+\vert x \vert^2 + \int_0^TE\vert u(t) \vert^2dt \right)
\end{equation}
for some constant $ K\geq 0 $.
\end{theorem}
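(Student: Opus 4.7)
The plan is to adapt the classical Picard iteration to the regime-switching setting. The key observation is that $\alpha(t-)$ is predictable and takes values in the finite state space $I$, so the Lipschitz constant $K$ and the bound on $|\varphi(t,0,0,i)|$ in assumption \textbf{(A1)} hold uniformly in $i\in I$. Consequently, whenever a process $y\in L^2_{\mathcal{F}}(S;\mathbb{R}^n)$ is plugged into $\varphi(t,y(t),u(t),\alpha(t-))$, the resulting process inherits a linear growth bound in $y$ and $u$, uniformly in $\omega$ and $t$. Since $u\in L^4(S;\mathbb{R}^k)\subset L^2(S;\mathbb{R}^k)$, all integrals in the sequel are well defined.

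First I would define the iteration $x^{(0)}(t)\equiv x_0$ and, for $n\geq 0$,
\begin{equation*}
x^{(n+1)}(t)=x_0+\int_0^t b(s,x^{(n)}(s),u(s),\alpha(s-))\,ds+\int_0^t\sigma(s,x^{(n)}(s),u(s),\alpha(s-))\,dW(s).
\end{equation*}
By induction, using the Burkholder--Davis--Gundy (BDG) inequality together with the Lipschitz and linear growth bounds from \textbf{(A1)}, each $x^{(n)}$ is continuous, $\mathcal{F}_t$-adapted, and lies in the space of processes with finite $E\sup_{0\le t\le T}|\cdot|^2$. Letting $\Delta^{(n)}(t)=x^{(n+1)}(t)-x^{(n)}(t)$, another application of BDG and the uniform Lipschitz bound yields
\begin{equation*}
E\sup_{0\le s\le t}|\Delta^{(n)}(s)|^2\le C\int_0^t E|\Delta^{(n-1)}(s)|^2\,ds,
\end{equation*}
with $C$ depending only on $K$ and $T$. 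Iterating gives $E\sup_{0\le t\le T}|\Delta^{(n)}(t)|^2\le (CT)^n/n!\cdot M$ for some finite $M$, so $\{x^{(n)}\}$ is Cauchy in the Banach space of continuous adapted processes under the norm $(E\sup_{[0,T]}|\cdot|^2)^{1/2}$. Its limit $x$ is continuous and adapted; passing to the limit in the integral equation using $L^2$ convergence of the stochastic integrals yields the existence of a solution.

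For uniqueness, if $x$ and $\tilde{x}$ are two solutions, the same BDG/Lipschitz estimate gives
\begin{equation*}
E\sup_{0\le s\le t}|x(s)-\tilde{x}(s)|^2\le C\int_0^t E\sup_{0\le r\le s}|x(r)-\tilde{x}(r)|^2\,ds,
\end{equation*}
and Gronwall's inequality forces indistinguishability. For the a priori bound \eqref{SDEEstimate}, I would apply BDG to the stochastic integral and use $|\varphi(t,x,u,i)|\le K(1+|x|+|u|)$ (a consequence of the Lipschitz and bound conditions in \textbf{(A1)}) to get
\begin{equation*}
E\sup_{0\le s\le t}|x(s)|^2\le K\Bigl(1+|x_0|^2+\int_0^T E|u(s)|^2\,ds\Bigr)+K\int_0^t E\sup_{0\le r\le s}|x(r)|^2\,ds,
\end{equation*}
and Gronwall's inequality delivers \eqref{SDEEstimate}. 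The main subtlety, rather than a real obstacle, is checking that the stochastic integrals are well defined and that the dominated/Cauchy arguments work despite the jumps of $\alpha$; this is handled by the fact that $\alpha$ is càdlàg with finitely many values, so $\mathcal{X}[\alpha(t-)=i]$ is a bounded predictable process for each $i\in I$ and the estimates are uniform over the finite decomposition $\sum_{i\in I}\mathcal{X}[\alpha(t-)=i]$.
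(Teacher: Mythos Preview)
Your proposal is correct and follows the standard Picard iteration argument. The paper itself does not give a proof of this theorem; it simply states that the result can be proved ``using the same method as in \cite[Chapter 3, Theorem 3.13]{maoyuan:SDEswitching}'', and your BDG--Lipschitz--Gronwall scheme is precisely that method, so there is nothing to compare.
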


We now develop results for existence and uniqueness of adapted solutions to regime switching BSDEs of type \eqref{ajointeqn}. Here we use the method of contraction mapping as in \cite[Chapter 6, Section 3]{yong.zhou:stochasticcontrols} and \cite[Chapter 6, Section 2]{pham:contimuoustimeSC} with the help of a martingale representation theorem for the joint filtration of a vector Brownian motion and a finite state Markov chain. 
Here we introduce the Dol\'{e}ans measure $ v_{[Q_{ij}]} $ on the measure space $ \left(\Omega\times[0,T],\mathcal{P}^\star\right) $:
\begin{align*}
v_{[Q_{ij}]}[A]:=E\int_0^T\mathcal{X}_A(\omega,t)d[Q_{ij}](t), \forall A\in\mathcal{P}^\star, \forall i,j\in I, i\neq j.
\end{align*}
By $ G=H \ v_{[Q]}$-a.e. for $ \mathbb{R}^{d\times d} $ mappings $ G $ and $ H $ on the set $ \Omega\times[0,T] $, we mean that
\begin{align*}
G_{ij}&=H_{ij} \ v_{[Q_{ij}]}\text{-a.e. } \forall i,j\in I, \ i\neq j\\
\text{and } G_{ii}&=H_{ii} \ \left( \mathbb{P}\otimes\text{Leb} \right)\text{-a.e. } \forall i\in I.
\end{align*}
We start by defining the following spaces for stochastic processes.
\begin{align*}
\mathbb{S}^2(\left[0,T\right]):=&\bigg\{Y:\Omega\times[0,T]\rightarrow\mathbb{R}^n\vert Y \text{ is } \mathcal{F}_t \text{ progressively measurable }\\ 
&\text{and } E\left(\sup_{0\leq t \leq T}\vert Y(t) \vert^2\right) < \infty\bigg\},\\
L^2\left(W,[0,T]\right)&:=\left\{\Lambda:\Omega\times[0,T]\rightarrow\mathbb{R}^{n\times m}\vert \Lambda\in\mathcal{P}^\star \text{ and } E\int_0^T\|\Lambda(t)\|^2 dt<\infty \right\},\\
L^2\left(Q,[0,T]\right)&:=\bigg\{ \Gamma=\left\{\left(\Gamma_{ij}^{(1)}\right)_{i,j=1}^d,\cdots ,\left(\Gamma_{ij}^{(n)}\right)_{i,j=1}^d\bigg\} \middle| \Gamma^{(l)}_{ii}=0 \ \mathbb{P}\otimes\text{Leb}-a.e. \forall i\in I, \right.\\
&\Gamma^{(l)}_{ij}\in \mathcal{P}^\star \text{ and } \sum_{l=1}^n\sum_{i,j=1}^dE\int_0^T\| \Gamma^{(l)}_{ij}(t) \|^2d\left[ Q_{ij} \right](t)<\infty \ \forall i,j\in I, i\neq j \bigg\}.
\end{align*}
It can be proved that $ L^2(W,[0,T]) $ and $ L^2(Q,[0,T]) $ are Hilbert spaces (see \cite[Lemma A.2.5]{DonnellyPhdThesis}). Next we present a martingale representation theorem for square integrable martingales with joint filtration generated by a Brownian motion and a finite state Markov chain. The proof can be found in \cite[Theorem B.4.6]{DonnellyPhdThesis} and \cite[Proposition 3.9]{donheu:regimeswitching}. 
\begin{theorem}\label{MRT}
Suppose the $ \mathbb{R}^n $-valued process $ \left\{Y(t),t\in[0,T]\right\} $ is a square-integrable $ \{\mathcal{F}_t\} $-martingale and null at the origin. Then there exists processes $ \Lambda\in L^2(W,[0,T]) $ and $ \Gamma\in L^2(Q,[0,T]) $ such that $ Y $ has the stochastic integral representation 
\begin{equation}\label{martingalerep}
Y(t)=Y(0)+\sum_{j=1}^m\int_0^t\Lambda_j(s)dW^j(s)+\int_0^t\Gamma(s)\bullet dQ(s) \ \text{a.s. }\forall t\in[0,T]  
\end{equation}
with the square-bracket quadratic variation process of $ Y $ given by 
\begin{align*}
\left[Y\right](t):=\sum_{i=1}^n\sum_{j=1}^m\int_0^t\Lambda_{ij}^2(s)ds+\sum_{l=1}^n\sum_{i,j=1}^d\int_0^t\left(\Gamma_{ij}^{\left(l\right)}(s)\right)^2d\left[Q_{ij}\right](t)\text{ a.s. }\forall t\in[0,T].
\end{align*}
Moreover, $ \Lambda $ and $ \Gamma $ are unique in the sense that if $ \tilde{\Lambda}\in L^2(W,[0,T]) $ and $ \tilde{\Gamma}\in L^2(Q,[0,T]) $ are such that \eqref{martingalerep} holds, then $ \Lambda=\tilde{\Lambda} \ \mathbb{P}\otimes\text{Leb}-a.e. $ and $ \Gamma=\tilde{\Gamma} \ v_{[Q]}-a.e.$
\end{theorem}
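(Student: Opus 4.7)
My plan is to use the standard Hilbert-space density argument for martingale representation. Let $\mathcal{M}^2$ denote the space of square-integrable $\mathbb{R}^n$-valued $\{\mathcal{F}_t\}$-martingales vanishing at zero, endowed with the inner product $\langle M, N\rangle := E[M(T)^\intercal N(T)]$, under which it is a Hilbert space. Let $\mathcal{L} \subset \mathcal{M}^2$ be the linear subspace consisting of all processes of the form
$$ \sum_{j=1}^m \int_0^\cdot \Lambda_j(s)\,dW^j(s) + \int_0^\cdot \Gamma(s)\bullet dQ(s), \qquad \Lambda \in L^2(W,[0,T]),\; \Gamma \in L^2(Q,[0,T]). $$
Closedness of $\mathcal{L}$ in $\mathcal{M}^2$ follows from an isometric identification of $\mathcal{L}$ with $L^2(W,[0,T]) \oplus L^2(Q,[0,T])$ (a Hilbert space by the cited lemma); the isometry is built from the It\^o isometry for $W$, the compensated jump-martingale isometry for each $Q_{ij}$ (using $d\langle Q_{ij}\rangle = q_{ij}\mathcal{X}[\alpha(s-)=i]ds$), and the pairwise strong orthogonality of the $W^j$ and the $Q_{ij}$, which holds because $W$ is continuous while the $Q_{ij}$ are purely discontinuous and because $W$ and $\alpha$ are independent.

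Next I would show $\mathcal{L}^\perp = \{0\}$ in $\mathcal{M}^2$. Given $N \in \mathcal{L}^\perp$, testing against integrands of the form $\Lambda_j = \eta\,\mathbf{1}_{(s,t]}$ with $\eta$ bounded and $\mathcal{F}_s$-measurable and using $E[N(T)\int_0^T \Lambda_j\,dW^j] = 0$ together with conditioning yields $E[\eta\,([N,W^j]_t - [N,W^j]_s)] = 0$, hence $[N,W^j] \equiv 0$ for every $j$. An analogous argument with indicator integrands supported on the graphs of the jumps of $\alpha$ gives $[N, Q_{ij}] \equiv 0$ for all $i \neq j$. The delicate remaining step, which I expect to be the main obstacle, is to conclude that an $\{\mathcal{F}_t\}$-martingale strongly orthogonal to $W^1,\ldots,W^m$ and to every $Q_{ij}$ must vanish identically. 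This is where I would invoke Jacod's predictable representation criterion: since $\{\mathcal{F}_t\}$ is generated by $W$ and $\alpha$ and the two are independent, the joint law of $(W,\alpha)$ is the unique probability measure under which $W$ is an $m$-dimensional standard Brownian motion and each $Q_{ij}$ has compensator $\langle Q_{ij}\rangle$; extremality of $\mathbb{P}$ in this set of martingale measures is equivalent to the predictable representation property for $\{W^j,Q_{ij}\}$, which forces $N \equiv 0$.

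A more hands-on alternative to Jacod's criterion, should the abstract route feel unsatisfying, is to build the representation by density: approximate $\xi \in L^2(\mathcal{F}_T)$ by linear combinations of products $f(W_{t_1},\ldots,W_{t_p})\,g(\alpha_{s_1},\ldots,\alpha_{s_q})$, apply It\^o's representation theorem on the Brownian side and the known finite-state-chain representation on the Markov side, and combine them by an integration-by-parts argument that exploits the independence of $W$ and $\alpha$ to get mixed integrals against $W$ and $Q$. Either way, once existence is in place, uniqueness of $\Lambda$ and $\Gamma$ follows immediately by subtracting two representations, computing the quadratic variation, and reading off $E\int_0^T |\Lambda - \tilde\Lambda|^2\,dt = 0$ and $E\int_0^T |\Gamma^{(l)}_{ij} - \tilde\Gamma^{(l)}_{ij}|^2\,d[Q_{ij}](t) = 0$, yielding equality $\mathbb{P}\otimes\mathrm{Leb}$-a.e.\ and $v_{[Q_{ij}]}$-a.e., respectively, in the senses defined just above the theorem.
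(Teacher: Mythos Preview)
The paper does not supply its own proof of this theorem: immediately before the statement it writes ``The proof can be found in \cite[Theorem B.4.6]{DonnellyPhdThesis} and \cite[Proposition 3.9]{donheu:regimeswitching},'' and no argument appears thereafter. So there is nothing in the paper to compare your proposal against.

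That said, your outline is the standard and essentially correct route to a result of this type. The Hilbert-space orthogonal-complement argument is exactly how the cited references (and most treatments of predictable representation for jointly generated filtrations) proceed: one shows the stable subspace generated by the $W^j$ and the $Q_{ij}$ is all of $\mathcal{M}^2$ by checking that any $N\in\mathcal{L}^\perp$ is strongly orthogonal to each $W^j$ and each $Q_{ij}$, and then invokes a structural fact about the filtration to force $N\equiv0$. Your two options for this last step --- Jacod's extremality criterion, or an explicit density argument exploiting independence of $W$ and $\alpha$ --- are both viable; the Donnelly--Heunis reference follows the second, more constructive path. The isometry and uniqueness portions of your sketch are correct as written. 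One small point worth tightening: your claim that $[N,Q_{ij}]\equiv0$ follows from testing against ``indicator integrands supported on the graphs of the jumps of $\alpha$'' needs those integrands to be \emph{predictable}, so you should phrase the test functions as $\eta\,\mathbf{1}_{(s,t]}$ with $\eta$ bounded $\mathcal{F}_s$-measurable, exactly as you did for the Brownian part, and then use $E\big[N(T)\int_0^T \Gamma^{(l)}_{ij}\,dQ_{ij}\big]=E\big[\int_0^T \Gamma^{(l)}_{ij}\,d[N^{(l)},Q_{ij}]\big]$.
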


Suppose we are given a pair $ (\xi,f) $ called the terminal and generator satisfying the following conditions:
\begin{enumerate}
\item[(a)] $E\vert \xi \vert^2 <\infty $,
\item[(b)] $f: \Omega\times[0,T]\times\mathbb{R}^n\times\mathbb{R}^{n\times m}\rightarrow\mathbb{R}^n \text{ such that}$
\begin{enumerate}
\item[(i)] $ f(t,y,z) $ is $ \mathcal{F}_t $-progressively measurable for all $y,z $.
\item[(ii)] $ f(t,0,0)\in L_{\mathcal{F}}^2(S;\mathbb{R}^n) $,
\item[(iii)] $ f $ satisfies uniform Lipschitz condition in $ \left( y,z \right) $, i.e $\exists C_f>0 \text{ such that } $
\begin{align*}
\vert f(t,y_1,z_1)-f(t,y_2,z_2) \vert \leq C_f\left( \vert y_1-y_2 \vert+\vert z_1-z_2 \vert \right)
\end{align*}
$\forall y_1,y_2\in \mathbb{R}^n,z_1,z_2\in\mathbb{R}^{n\times m} \ \mathbb{P}\otimes\text{Leb} \ a.e. $
\end{enumerate}
\end{enumerate}
Consider the regime switching BSDE 
\begin{equation}\label{BSDE}
-dY(t)=f(t,Y(t),Z(t))dt-Z(t)dW(t)-S(t)\bullet dQ(t), \ Y(T)=\xi.
\end{equation}
\begin{definition}\label{RSBSDEdefn}
A solution to the regime switching BSDE \eqref{BSDE} is a set $ (Y,Z,S)\in\mathbb{S}^2([0,T])\times L^2(W,[0,T])\times L^2(Q,[0,T]) $ satisfying 
\begin{align*}
Y(t)=\xi+\int_t^T f(s,Y(s),Z(s)))ds-\int_t^T Z(s)dW(s)-\int_t^T S(s)\bullet dQ(t).
\end{align*}
\end{definition}
Now we prove the existence and uniqueness of a solution to the regime switching BSDE of type \eqref{BSDE}.
\begin{theorem}\label{RSBSDEtheorem}
Given a pair $ (\xi,f) $ satisfying $ (a) $ and $ (b) $, there exists a unique solution $ (Y,Z,S) $ to the regime switching BSDE \eqref{BSDE}.
\end{theorem}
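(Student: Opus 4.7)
The plan is to adapt the classical contraction-mapping proof for Brownian BSDEs (see \cite[Chapter 7]{yong.zhou:stochasticcontrols}) to the regime-switching setting, using Theorem \ref{MRT} as the key analytic tool. I would work in the product space $\mathcal{H}:=L^2_{\mathcal{F}}(S;\mathbb{R}^n)\times L^2(W,[0,T])\times L^2(Q,[0,T])$ equipped with the weighted norm
\begin{equation*}
\|(y,z,s)\|_\beta^2 := E\int_0^T e^{\beta t}\bigl(|y(t)|^2+\|z(t)\|^2\bigr)dt+\sum_{l=1}^n\sum_{i\neq j}E\int_0^T e^{\beta t}|s^{(l)}_{ij}(t)|^2\,d[Q_{ij}](t),
\end{equation*}
which for any $\beta\geq 0$ is equivalent to the unweighted norm on $\mathcal{H}$; the parameter $\beta$ will be chosen large at the end to buy contractivity.

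First, given $(y,z,s)\in\mathcal{H}$, I would freeze the generator at $(y(t),z(t))$ and consider the square-integrable martingale
\begin{equation*}
M(t):=E\!\left[\xi+\int_0^T f(r,y(r),z(r))\,dr\,\bigg|\,\mathcal{F}_t\right],
\end{equation*}
which is well defined because conditions (a) and (b) together with the Lipschitz bound on $f$ force $f(\cdot,y,z)\in L^2_{\mathcal{F}}(S;\mathbb{R}^n)$. Applying the regime-switching martingale representation Theorem \ref{MRT} to $M-M(0)$ produces unique processes $(Z,S)\in L^2(W,[0,T])\times L^2(Q,[0,T])$ such that $M(t)=M(0)+\int_0^t Z\,dW+\int_0^t S\bullet dQ$. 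Setting
\begin{equation*}
Y(t):=M(t)-\int_0^t f(r,y(r),z(r))\,dr = E\!\left[\xi+\int_t^T f(r,y,z)\,dr\,\bigg|\,\mathcal{F}_t\right]
\end{equation*}
yields a triple that satisfies the BSDE with the frozen generator, and Doob's maximal inequality upgrades $Y$ to $\mathbb{S}^2([0,T])$. This defines a map $\Phi(y,z,s):=(Y,Z,S)$ from $\mathcal{H}$ into itself.

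Next I would show $\Phi$ is a contraction on $(\mathcal{H},\|\cdot\|_\beta)$ for $\beta$ large. Writing the difference of two images as the linear BSDE
\begin{equation*}
-d(\delta Y)(t)=\bigl[f(t,y_1,z_1)-f(t,y_2,z_2)\bigr]dt-\delta Z(t)\,dW(t)-\delta S(t)\bullet dQ(t),\qquad \delta Y(T)=0,
\end{equation*}
I would apply Ito's formula to $e^{\beta t}|\delta Y(t)|^2$, integrate from $0$ to $T$ and take expectation. After standard localization the $\int \delta Y\,\delta Z\,dW$ and $\int \delta Y\,\delta S\bullet dQ$ integrals vanish, and the Lipschitz hypothesis combined with Young's inequality $2ab\leq \eta a^2+\eta^{-1}b^2$ produces an estimate of the form
\begin{equation*}
E|\delta Y(0)|^2+\beta\,\||\delta Y|\|_\beta^2+\|(\delta Z,\delta S)\|_\beta^2 \;\leq\; \eta^{-1}\||\delta Y|\|_\beta^2+\eta C_f^2\,\|(\delta y,\delta z)\|_\beta^2,
\end{equation*}
so that choosing $\eta$ small and then $\beta$ large enough produces a contraction constant strictly less than $1$. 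The Banach fixed-point theorem then delivers a unique fixed point of $\Phi$, which is the required unique solution of \eqref{BSDE}.

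The main obstacle is the careful bookkeeping of the $S\bullet dQ$ terms in the Ito expansion: the quadratic variation decomposes as $\sum_{l,i\neq j}\int_0^\cdot |\delta s^{(l)}_{ij}|^2\,d[Q_{ij}]$ with intensity $q_{ij}\mathcal{X}[\alpha(s-)=i]\,ds$, so the $|\delta S|^2$ term naturally appears integrated against $d[Q_{ij}]$ rather than $dt$, and one must verify that the cross bracket $[W,Q_{ij}]$ vanishes (which follows from the continuity of $W$, the pure discontinuity of $Q_{ij}$ and the independence of $W$ and $\alpha$) to ensure no cross terms survive in expectation. Once this is in place, the remaining manipulations mirror the pure-Brownian case line by line.
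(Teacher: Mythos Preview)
Your proposal is correct and follows essentially the same contraction-mapping argument as the paper: construct $\Phi$ via the martingale representation Theorem \ref{MRT}, apply It\^o's formula to $e^{\beta t}|\delta Y(t)|^2$, and use the Lipschitz condition with Young's inequality to choose $\beta$ large (the paper takes $\beta=1+4C_f^2$) so that $\Phi$ contracts in the weighted norm. The only cosmetic difference is that the paper runs the fixed-point argument on $\mathbb{S}^2([0,T])\times L^2(W,[0,T])\times L^2(Q,[0,T])$ rather than your $\mathcal{H}$, but both use the same $L^2$-type weighted norm for the contraction estimate, so this is immaterial.
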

The proof follows a contraction mapping argument similar to that in \cite[Chapter 6, Section 2]{pham:contimuoustimeSC}. For completeness, we give details in Appendix.

\subsection{A Moment Estimation}
In this subsection, we prove a moment estimation result. A simplified version of the moment estimate can be found in \cite[Chapter 3 Lemma 4.2 ]{yong.zhou:stochasticcontrols}.
\begin{lemma}\label{momest}
Let $ Y(t)\in L^2_{\mathcal{F}}(S;\mathbb{R}^n) $ be the solution of the following regime switching SDE
\begin{equation}
\left\{
\begin{array}{l}
dY(t)=[A(t)Y(t)+\beta(t)]dt+\displaystyle\sum_{j=1}^m\left[ B^j(t)Y(t)+\gamma^j(t) \right]dW^j(t)\\ 
Y(0)=y_0
\end{array}
\right.
\end{equation}
where $ A,B^j : \Omega\times[0,T]\rightarrow\mathbb{R}^{n\times n}$ and $ \beta,\gamma^j:\Omega\times[0,T]\rightarrow\mathbb{R}^n$ are $ \{\mathcal{F}_t\}_{t\geq 0} $-adapted and 
\begin{equation}
\left\{
\begin{array}{ll}
\vert A(t)\vert, \vert B^j(t)\vert \leq K \mbox{ a.e.} t\in[0,T], \mathbb{P}\mbox{-a.s.}\\
\displaystyle\int_0^TE\vert \beta(s) \vert^{2k}ds + \int_0^TE\vert \gamma^j(s) \vert^{2k}ds <\infty \mbox{ for some } k\geq 1.
\end{array}
\right.
\end{equation}
Then 
\begin{equation}\label{momesteqn}
\begin{array}{cc}
\sup\limits_{t\in[0,T]}E\vert Y(t) \vert^{2k} \leq K\left\{ E\vert y_0 \vert^{2k}+ \displaystyle\int_0^TE\vert \beta(s) \vert^{2k}ds+ \sum_{j=1}^m\int^T_0E\vert \gamma^j(s) \vert^{2k}ds \right\}
\end{array}
\end{equation}
\end{lemma}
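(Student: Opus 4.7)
The SDE has no jump term, so regime switching enters only through the adaptedness of the coefficients $A,B^j,\beta,\gamma^j$ to $\{\mathcal{F}_t\}$; consequently the estimate can be obtained by a standard It\^o argument applied to $\phi(y):=|y|^{2k}$, followed by Gronwall's inequality. The plan is to first record the derivatives $\phi_y(y)=2k|y|^{2k-2}y$ and $\phi_{yy}(y)=2k|y|^{2k-2}I+2k(2k-2)|y|^{2k-4}yy^{\intercal}$, so that $|\phi_{yy}(y)|\le C_k|y|^{2k-2}$, and then apply It\^o's formula to $|Y(t)|^{2k}$.

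Carrying out the calculation and writing $\sigma^j(t):=B^j(t)Y(t)+\gamma^j(t)$, the drift of $d|Y(t)|^{2k}$ is bounded in absolute value by
\begin{equation*}
C_k\Bigl(|Y|^{2k-1}\bigl(|A||Y|+|\beta|\bigr)+|Y|^{2k-2}\sum_{j=1}^m|\sigma^j|^2\Bigr)
\le C_k'\Bigl(|Y|^{2k}+|Y|^{2k-1}|\beta|+\sum_{j=1}^m|Y|^{2k-2}|\gamma^j|^2\Bigr),
\end{equation*}
using the uniform bound on $|A|,|B^j|$. Two applications of Young's inequality, namely $a^{2k-1}b\le(2k-1)a^{2k}/(2k)+b^{2k}/(2k)$ and $a^{2k-2}b^2\le(k-1)a^{2k}/k+b^{2k}/k$, bound this by $\tilde C_k\bigl(|Y|^{2k}+|\beta|^{2k}+\sum_j|\gamma^j|^{2k}\bigr)$. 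Taking expectation (after a standard localization by stopping times $\tau_n=\inf\{t:|Y(t)|\ge n\}\wedge T$ to kill the local-martingale part, then sending $n\to\infty$ via monotone/dominated convergence and the hypothesis $Y\in L^2_{\mathcal F}$) yields
\begin{equation*}
E|Y(t)|^{2k}\le E|y_0|^{2k}+\tilde C_k\int_0^t\Bigl(E|Y(s)|^{2k}+E|\beta(s)|^{2k}+\sum_{j=1}^m E|\gamma^j(s)|^{2k}\Bigr)ds.
\end{equation*}
Gronwall's lemma applied to $\varphi(t):=E|Y(t)|^{2k}$ then produces \eqref{momesteqn} with $K=e^{\tilde C_k T}(1+\tilde C_k T)$, and taking the supremum over $t\in[0,T]$ finishes the argument.

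The main technical point I expect to have to spell out is the localization step: because the hypothesis only gives $Y\in L^2_{\mathcal F}$ rather than $Y\in L^{2k}_{\mathcal F}$, one has to verify \emph{a priori} that $E|Y(t)|^{2k}<\infty$ before using Gronwall, otherwise the inequality above is vacuous. The standard trick is to run the argument with the stopped process $Y(\cdot\wedge\tau_n)$ on the left and $|Y|^{2k-2}\mathbf{1}_{\{|Y|\le n\}}$ bounds on the right, obtain \eqref{momesteqn} for $E|Y(t\wedge\tau_n)|^{2k}$ with a constant independent of $n$, and then pass to the limit by Fatou together with the fact that $\tau_n\to T$ a.s. (since $Y$ has continuous paths). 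No use of a martingale representation or of the Markov chain structure is needed in the proof itself; it is purely a linear-growth It\^o estimate.
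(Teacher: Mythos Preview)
Your proposal is correct and follows the same core strategy as the paper: apply It\^o's formula to (a version of) $|Y|^{2k}$, use boundedness of $A,B^j$ and Young's inequality to reduce the drift to $C\bigl(|Y|^{2k}+|\beta|^{2k}+\sum_j|\gamma^j|^{2k}\bigr)$, take expectation, and close with Gronwall. The only cosmetic difference is in the localization device: the paper smooths $|y|$ to $\langle y\rangle_\epsilon:=\sqrt{|y|^2+\epsilon^2}$ (and first assumes $\beta,\gamma$ bounded) before letting $\epsilon\to0$, whereas you apply It\^o directly to $|y|^{2k}$---legitimate since this function is $C^2$ for $k\ge1$---and localize with stopping times instead.
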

\begin{proof}
For notation simplicity, we prove only the case $ m=n=1 $, leaving the case $ m,n>1 $ to the interested reader. We first assume that $ \beta,\gamma$ are bounded. Let $ \epsilon>0 $ and define 
\begin{equation}\label{approxabsoluteY}
\langle Y \rangle_\epsilon\triangleq\sqrt{\vert Y\vert^2+\epsilon^2}, \forall Y\in\mathbb{R}^L.
\end{equation}
Note that for any $ \epsilon>0 $, the map $ Y\rightarrow\langle Y \rangle_\epsilon $ is smooth and $ \langle Y \rangle_\epsilon\rightarrow\vert Y \vert $ as $ \epsilon\rightarrow 0 $. Applying Ito's formula to $ \langle Y(t) \rangle_\epsilon^{2k} $, we have
\begin{align*}
&d\langle Y(t) \rangle_\epsilon^{2k}=2k\langle Y(t) \rangle_\epsilon^{2k-1}\left.\dfrac{\vert Y(t) \vert}{\langle Y(t) \rangle_\epsilon}\right\{ \left[A(t)Y(t)+\beta(t)\right]dt+\left[B(t)Y(t)+\gamma(t)\right]dW(t)\bigg\} \\
&+\left[ k(2k-1)\langle Y(t) \rangle_\epsilon^{2k-2}\dfrac{\vert Y(t) \vert^2}{\langle Y(t) \rangle^2_\epsilon} +k\langle Y(t) \rangle_\epsilon^{2k-1}\dfrac{\epsilon^2}{\langle Y(t) \rangle^3_\epsilon} \right]\left[B(t)Y(t)+\gamma(t)\right]^2dt.
\end{align*}
Writing it in integral form and taking expectation. Since $ \langle Y(t) \rangle_\epsilon>\vert Y(t) \vert $ and $ 2k-1\geq1 $, we obtain
\begin{align*}
E\langle Y(t) \rangle^{2k}_\epsilon\leq & E\langle Y(0) \rangle^{2k}_{\epsilon}+2kE\int_0^t\langle Y(s) \rangle_{\epsilon}^{2k-1}\left\{ \vert A(s)\vert\langle Y(s) \rangle_\epsilon + \vert \beta(s) \vert \right\}ds\\
&+k(2k-1)E\int_0^t\langle Y(s) \rangle_\epsilon^{2k-2}\left[ \vert B(s)\vert\langle Y(s) \rangle_\epsilon +\vert\gamma(s)\vert \right]^2ds\\
\leq & E\langle Y(0) \rangle^{2k}_\epsilon+\left. KE\int_0^t\right\{\langle Y(s) \rangle_\epsilon^{2k}+\vert \beta(s) \vert\langle Y(s) \rangle^{2k-1}_\epsilon +\vert \gamma(s) \vert^2\langle Y(s) \rangle^{2k-2}_\epsilon\bigg\} ds,
\end{align*}
where $ K $ is a constant independent of $ t $. Applying \textit{Young's inequality}, we get
\begin{align*}
E\langle Y(t) \rangle_\epsilon^{2k}\leq & E\langle Y(0) \rangle^{2k}_\epsilon +KE\int_0^t\bigg\{\langle Y(s) \rangle_\epsilon^{2k}+\vert \beta(s) \vert^{2k}+\vert \gamma(s) \vert^{2k}\bigg\}ds.
\end{align*}
Finally, \textit{Gronwall's inequality} yields
\begin{equation}\label{momestlasteqn}
\begin{array}{ll}
\sup\limits_{t\in[0,T]}E\langle Y(t) \rangle^{2k}_\epsilon \leq & K\bigg\{ E\langle Y(0) \rangle^{2k}_\epsilon +E\displaystyle\int_0^T \left[ \vert \beta(s) \vert^{2k} + \vert \gamma(s) \vert^{2k}\right]ds\bigg\},
\end{array}
\end{equation}
for some constant $ K$. Letting $ \epsilon\rightarrow 0 $ in \eqref{approxabsoluteY}, then \eqref{momestlasteqn} becomes \eqref{momesteqn}.
\end{proof}

\subsection{Lipschitz Property}
\begin{lemma}\label{Liplemma1}
Let $ u_1, u_2 \in L^4(S;\mathbb{R}^k)$ and $ x_1, x_2 $ be the associated state processes satisfying \eqref{stateeqn}. The we have the following inequality:
\begin{align*}
\sup_{t\in[0,T]}E\vert x_1(t)-x_2(t) \vert^4 \leq K\|u_1-u_2\|^4
\end{align*}
\end{lemma}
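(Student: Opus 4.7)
The plan is to write the difference $\tilde x(t):=x_1(t)-x_2(t)$ as the solution of a linear regime-switching SDE driven by an inhomogeneous term that is controlled by $u_1-u_2$, and then to invoke the moment estimate Lemma \ref{momest} with $k=2$. Since $\tilde x(0)=0$, and
\begin{align*}
d\tilde x(t)=\bigl[b(t,x_1,u_1,\alpha(t-))-b(t,x_2,u_2,\alpha(t-))\bigr]dt+\bigl[\sigma(t,x_1,u_1,\alpha(t-))-\sigma(t,x_2,u_2,\alpha(t-))\bigr]dW(t),
\end{align*}
I would decompose each coefficient increment by inserting the mixed term $\varphi(t,x_2,u_1,\alpha(t-))$ for $\varphi\in\{b,\sigma\}$. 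Using (A2), which gives $C^1$ dependence of $b,\sigma$ in $x$, the first piece can be written as a multiplicative term $A(t)\tilde x(t)$ (respectively $B^j(t)\tilde x(t)$) with
\begin{align*}
A(t):=\int_0^1 b_x(t,x_2(t)+\theta\tilde x(t),u_1(t),\alpha(t-))\,d\theta,
\end{align*}
and the analogous definition for $B^j(t)$. Because of (A1), $|b_x|,|\sigma_x|\le K$ so $|A(t)|,|B^j(t)|\le K$ almost everywhere. The remaining pieces
\begin{align*}
\beta(t):=b(t,x_2,u_1,\alpha(t-))-b(t,x_2,u_2,\alpha(t-)),\qquad \gamma^j(t):=\sigma^{(\cdot,j)}(t,x_2,u_1,\alpha(t-))-\sigma^{(\cdot,j)}(t,x_2,u_2,\alpha(t-))
\end{align*}
satisfy $|\beta(t)|+|\gamma^j(t)|\le K|u_1(t)-u_2(t)|$ by (A1), and the hypothesis $u_1,u_2\in L^4(S;\mathbb{R}^k)$ gives $\int_0^T E|\beta(s)|^{4}ds+\sum_{j}\int_0^T E|\gamma^j(s)|^4 ds<\infty$, so the inhomogeneous data satisfy the integrability requirement of Lemma \ref{momest} with $k=2$.

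Applying Lemma \ref{momest} to $\tilde x$ with $y_0=0$ and $k=2$ then yields
\begin{align*}
\sup_{t\in[0,T]}E|\tilde x(t)|^4\le K\left\{\int_0^T E|\beta(s)|^4\,ds+\sum_{j=1}^m\int_0^T E|\gamma^j(s)|^4\,ds\right\}\le K\int_0^T E|u_1(s)-u_2(s)|^4\,ds = K\|u_1-u_2\|^4,
\end{align*}
which is the claim. The proof is essentially routine once the correct decomposition is set up; the only mildly delicate step is justifying the use of Lemma \ref{momest} in the presence of the Markov chain, but the lemma is stated for general $\mathcal{F}_t$-adapted coefficients and the regime-switching enters only through $\alpha(t-)$ in $A,B^j,\beta,\gamma^j$, which does not affect the boundedness of $A,B^j$ nor the $L^{2k}$ integrability of $\beta,\gamma^j$. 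The main (modest) obstacle is therefore just the care required to verify that $A,B^j$ are uniformly bounded regardless of the particular realization of $(x_1,x_2,u_1,\alpha)$, which follows immediately from (A1).
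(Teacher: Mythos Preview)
Your proposal is correct and follows essentially the same approach as the paper: linearize the $x$-dependence via the integral mean-value representation $\int_0^1\varphi_x(t,x_2+\theta\tilde x,u_1,\alpha(t-))\,d\theta$, leave the $u$-increment as the inhomogeneous term, and apply Lemma~\ref{momest} with $k=2$ using the Lipschitz bound from \textbf{(A1)}. The paper's proof is identical in structure and detail.
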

\begin{proof}
Let $\xi(t) \triangleq x_1(t)-x_2(t)$. Then we have
\begin{align*}
d\xi(t)=&\left[ b(t,x_1(t),u_1(t),\alpha(t-))-b(t,x_2(t),u_2(t),\alpha(t-)) \right]dt\\
        &+\left[ \sigma(t,x_1(t),u_1(t),\alpha(t-))-\sigma(t,x_2(t),u_2(t),\alpha(t-)) \right]dW(t)
\end{align*}
For $ \varphi=b $ and $ \sigma $, let
\begin{equation}\label{Lip1}
\tilde{\varphi}_x(t)=\int_0^1\varphi_x\left(t,x_2(t)+\theta(x_1(t)-x_2(t)),u_1(t),\alpha(t-)\right)d\theta.
\end{equation}
Substitute \eqref{Lip1}, we obtain
\begin{align*}
d\xi(t)=&\left[ \tilde{b}_x(t)\xi(t)+b(t,x_2(t),u_1(t),\alpha(t-))-b(t,x_2(t),u_2(t),\alpha(t-)) \right]dt\\
       +&\left[ \tilde{\sigma}_x(t)\xi(t)+ \sigma(t,x_2(t),u_1(t),\alpha(t-))-\sigma(t,x_2(t),u_2(t),\alpha(t-))\right]dW(t).
\end{align*}
By Lemma \ref{momest}, we obtain
\begin{align*}
\sup_{t\in[0,T]}E\vert \xi(t) \vert^4 \leq & K\left\{ \int_0^TE\vert b(t,x_2(t),u_2(t),\alpha(t-))-b(t,x_2(t),u_1(t),\alpha(t-)) \vert^4 dt \right.\\
                                           & + \int_0^TE\vert \sigma(t,x_2(t),u_2(t),\alpha(t-))-\sigma(t,x_2(t),u_1(t),\alpha(t-)) \vert^4 dt\\
                                       \leq & K\left\{ \int_0^TE\vert u_1(t)-u_2(t) \vert^4dt \right\}
\end{align*}
\end{proof}
\begin{lemma}\label{lemmalocalLip}
The cost functional $ J:L^4(S;\mathbb{R}^k)\rightarrow\mathbb{R} $ is locally Lipschitz, i.e. for all $ \hat{u}\in L^4(S;\mathbb{R}^k) $, there exists a small ball $ B_{\hat{u}}^M $ with radius $ M>0 $ containing $ \hat{u} $ on which, we have 
\begin{equation}\label{localLip}
\vert J(u_1)-J(u_2) \vert \leq K_{M,\hat{u}}\| u_1-u_2 \|,
\end{equation}
for $ \forall u_1,u_2\in B_{\hat{u}}^M $, where $ K_{M,\hat{u}} $ is a constant dependent on $ MF\text{ and } \hat{u}$.
\end{lemma}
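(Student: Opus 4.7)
The plan is to bound $|J(u_1) - J(u_2)|$ by decomposing the difference into pieces that isolate the perturbation of the state variable from the perturbation of the control variable, and then applying the growth bounds from \textbf{(A3)} and \textbf{(A4)} together with the $L^4$-Lipschitz estimate from Lemma \ref{Liplemma1}. Concretely, I would write
\begin{align*}
J(u_1) - J(u_2) =\; & E\int_0^T \bigl[f(t,x_1,u_1,\alpha) - f(t,x_2,u_1,\alpha)\bigr]\,dt \\
& + E\int_0^T \bigl[f(t,x_2,u_1,\alpha) - f(t,x_2,u_2,\alpha)\bigr]\,dt \\
& + E\bigl[h(x_1(T),\alpha(T)) - h(x_2(T),\alpha(T))\bigr],
\end{align*}
where $x_1,x_2$ are the state processes associated with $u_1,u_2$.

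For the first and third pieces I would apply the mean-value identity already used in the proof of Lemma \ref{Liplemma1} (via $\tilde\varphi_x$) and use \textbf{(A4)} together with the bound $|\varphi_x(t,0,0,i)|\leq L$ to obtain the linear-growth estimate $|\varphi_x(t,y,u,i)| \leq L(1+|y|) + \bar\omega(|u|)$ for $\varphi = f,h$. For the middle piece I would apply \textbf{(A3)} directly to obtain the pointwise bound $[K_1 + K_2(|x_2| + |u_1| + |u_2|)]|u_1-u_2|$.

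To upgrade these pointwise bounds to the $L^4$-norm on $u_1 - u_2$, I would use Cauchy--Schwarz on the state-dependent pieces, bounding $\int_0^T E|f_x|^2\,dt$ uniformly on the ball $B^M_{\hat u}$ via \eqref{SDEEstimate} applied to $x_2$, and controlling $\bigl(\int_0^T E|x_1-x_2|^2\,dt\bigr)^{1/2}$ by $T^{1/2} K^{1/4}\|u_1-u_2\|$ via Jensen's inequality combined with Lemma \ref{Liplemma1}. The terminal piece is handled analogously via Cauchy--Schwarz in $\Omega$, using $E|x_1(T)-x_2(T)|^2 \leq (E|x_1(T)-x_2(T)|^4)^{1/2}$. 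For the control-dependent piece, Hölder's inequality with conjugate exponents $4/3$ and $4$ gives
\[
E\int_0^T g(t)|u_1(t)-u_2(t)|\,dt \leq \Bigl(\int_0^T E g(t)^{4/3}\,dt\Bigr)^{3/4}\|u_1 - u_2\|,
\]
where $g(t) = K_1 + K_2(|x_2(t)| + |u_1(t)| + |u_2(t)|)$ has $L^{4/3}(S)$ norm bounded uniformly on $B^M_{\hat u}$, since $L^4(S) \hookrightarrow L^{4/3}(S)$ on the finite measure space $S$ and $x_2$ admits an $L^4$ moment bound following from Lemma \ref{momest} (with $k=2$).

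The main obstacle will be bookkeeping the constants so that $K_{M,\hat u}$ depends only on $M$, $\hat u$, $T$, $K$, $K_1$, $K_2$, and $L$. Some mild care is needed with the modulus-of-continuity term $\bar\omega(|u|)$ appearing in \textbf{(A4)}: one only needs $\bar\omega(|u_1|)$ to lie in $L^2(S)$ uniformly on $B^M_{\hat u}$, which is routine under the implicit regularity of $\bar\omega$. Collecting all estimates and absorbing the finite factors into a single constant yields the desired inequality \eqref{localLip}.
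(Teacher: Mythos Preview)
Your proposal is correct and follows essentially the same route as the paper: the same three-term decomposition, the mean-value identity with \textbf{(A4)} and Cauchy--Schwarz for the state-dependent pieces, \textbf{(A3)} plus H\"older for the control-dependent piece, and Lemma~\ref{Liplemma1} together with the SDE moment estimate to close. The only cosmetic difference is that the paper handles the control-dependent term via Cauchy--Schwarz (exponents $2,2$) followed by the embedding $L^4(S)\hookrightarrow L^2(S)$, whereas you use H\"older with exponents $4/3,4$ directly; both are equally valid, and your explicit remark about controlling $\bar\omega(|u_1|)$ in $L^2$ is a detail the paper leaves implicit.
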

\begin{proof}
Given $ \hat{u}\in L^4(S;\mathbb{R}^k) $ and $ M>0 $, define 
\begin{align*}
B_{\hat{u}}^M \triangleq \left\{ u \in L^4(S;\mathbb{R}^k):\| u-\hat{u} \|<M \right\}.
\end{align*}
For any $ u_1,u_2 \in B^M_{\hat{u}} $ with associated state processes $ x_1,x_2 $, according to \textbf{(A4)}, we have 
\begin{align*}
&E\vert h(x_1(T),\alpha(T))-h(x_2(T),\alpha(T)) \vert\\
&\leq E\int_0^1\vert \langle h_x(x_1(T)+\theta(x_2(T)-x_1(T)),\alpha(T)),x_2(T)-x_1(T) \rangle \vert d\theta\\
&\leq K\left\{ E\left( 1+\vert x_1(T) \vert^2+\vert x_2(T) \vert^2 \right) \right\}^{\frac{1}{2}} \left\{ E\vert x_2(T)-x_1(T) \vert^2 \right\}^{\frac{1}{2}}\\
&\leq K\left\{ E(1+\vert \hat{x}(T) \vert^2+\vert \hat{x}(T)-x_1(T) \vert^2+\vert \hat{x}(T)-x_2(T) \vert^2 ) \right\}^{\frac{1}{2}} \left\{ E\vert x_2(T)-x_1(T) \vert^2 \right\}^{\frac{1}{2}}
\end{align*}
by H\"{o}lder's inequality and Minkowski's inequality. According to Theorem \ref{exisuniqSDE}, Jensen's inequality and Lemma \ref{Liplemma1},
\begin{align*}
E\vert h(x_1(T),\alpha(T))-h(x_2(T),\alpha(T)) \vert \leq &K_M\left\{ 1+\left( \int_0^TE\vert \hat{u}(t) \vert^2 dt \right)^{\frac{1}{2}} \right\}\left\{ E\vert x_2(T)-x_1(T) \vert^2 \right\}^{\frac{1}{2}}\\
\leq & K_M\left\{ 1+\left( \int_0^TE\vert \hat{u}(t) \vert^4 dt \right)^{\frac{1}{4}} \right\}\left\{ E\vert x_2(T)-x_1(T) \vert^4 \right\}^{\frac{1}{4}}\\
\leq & K_{M,\hat{u}}\| u_1-u_2 \|.
\end{align*}
On the other hand,
\begin{align*}
E&\int_0^T\vert f(t,x_1(t),u_1(t),\alpha(t)) - f(t,x_2(t),u_2(t),\alpha(t)) \vert dt\\
\leq &  E\int_0^T\left(\vert f(t,x_1(t),u_1(t),\alpha(t)) - f(t,x_2(t),u_1(t),\alpha(t)) \vert\right. \\
&+ \left.\vert f(t,x_2(t),u_1(t),\alpha(t)) - f(t,x_2(t),u_2(t),\alpha(t)) \vert\right) dt.
\end{align*}
Following similar arguments, we have
\begin{align*}
E\int_0^T\vert f(t,x_1(t),u_1(t),\alpha(t)) - f(t,x_2(t),u_1(t),\alpha(t)) \vert dt \leq K_{M,\hat{u}}\| u_1-u_2 \|.
\end{align*}
For the second term, by \textbf{(A3)} we have
\begin{align*}
&E\int_0^T\vert f(t,x_2(t),u_1(t),\alpha(t)) - f(t,x_2(t),u_2(t),\alpha(t)) \vert dt\\
&\leq E\int_0^T\left\{ K_1+K_2(\vert x_2(t) \vert+\vert u_1(t) \vert+\vert u_2(t) \vert)\right\}\vert u_1(t)-u_2(t) \vert dt\\
&\leq K\left\{ E\int_0^T( 1+\vert x_2(t) \vert^2+\vert u_1(t) \vert^2+\vert u_2(t) \vert^2 )dt \right\}^{\frac{1}{2}}\| u_1-u_2 \|\\
&\leq K_M\left( 1+\left\{ E\int_0^T\vert \hat{u}(t) \vert^4 dt \right\}^{\frac{1}{4}} \right)\| u_1-u_2 \|\\
&\leq K_{M,\hat{u}}\| u_1-u_2 \|,
\end{align*}
and \eqref{localLip} follows by combining the above inequalities.
\end{proof}

\subsection{Taylor Expansions}
Let $ (x,u) $ be an admissible pair. 
Let $ v\in\L^4(S;\mathbb{R}^k) $ and $ \epsilon > 0 $. Define $ u^\epsilon(t) \triangleq u(t)+\epsilon v(t) $ for all $ t\in[0,T] $. Let $ (x^\epsilon,u^\epsilon) $ satisfy the following stochastic control system:
\begin{equation*}
\left\{
\begin{array}{cl}    
    dx^\epsilon(t) =& b(t,x^\epsilon(t),u^\epsilon(t),\alpha(t-))dt + \sigma(t,x^\epsilon(t),u^\epsilon(t),\alpha(t-))dW(t), \ t\in[0,T],\\
    
    x^\epsilon(0) =& x_0\in\mathbb{R}^L, \alpha(0)=i_0\in I.
\end{array}\right.
\end{equation*}
Next, for $ \varphi=b,\ \sigma^{j}(1\leq j \leq M) \textit{ and } f $, we define
\begin{align*}
\left\{
\begin{array}{ll}    
 &\varphi_x(t)  \triangleq \varphi_x(t,x(t),u(t),\alpha(t-)), \\
 &\delta\varphi(t)  \triangleq \varphi(t,x(t),u^\epsilon(t),\alpha(t-))-\varphi(t,x(t),u(t),\alpha(t-)).
\end{array}\right.
\end{align*}
Let $ y^\epsilon $ be the solution of the following regime-switching SDE:
\begin{equation}\label{yepsilon}
\left\{
\begin{array}{cl} 
 dy^\epsilon(t)  =& \left\lbrace b_x(t)y^\epsilon(t)+\delta b(t)\right\rbrace dt+ \sum\limits_{j=1}^m \left\lbrace \sigma_x^j(t)y^\epsilon(t)+\delta\sigma^j(t) \right\rbrace dW^j(t), \
 t\in [0,T], \\

 y^\epsilon(0) =& 0, \alpha(0)=i_0\in I.
\end{array}\right.
\end{equation}

\begin{remark}
{\rm 
The variation in our proof is different from the so-called spike variation technique in the proof of Peng's maximum principle in \cite{peng:SMP} and \cite{yong.zhou:stochasticcontrols}. In their proof, where $ u^\epsilon(t)=u(t)+1_{[\tau,\tau+\epsilon]}v(t) $, one first perturbs an optimal control on a small set of size $ \epsilon $ and then let $ \epsilon \rightarrow 0 $. Whereas, in our proof we perturbs an optimal control over the whole space. Then reason behind this is that in the definition of Clarke's generalized directional derivative, $ v(t) $ represents a directional vector in $ L^4(S;\mathbb{R}^k) $ and must be fixed. One perturbs the control through multiplication of a scalar $ \epsilon $ and letting $ \epsilon \rightarrow 0 $.
}
\end{remark}

The following lemma gives the Taylor expansion result of the state process and cost functional.
\begin{lemma}\label{TaylorExp}
Let assumptions \textbf{(A1)}-\textbf{(A4)} hold. Then, we have
\begin{align}\label{firstTaylor}
\sup_{t\in[0,T]} E\left| x^\epsilon(t)-x(t) \right|^2 = O(\epsilon^2),\\\label{secondTaylor}
\sup_{t\in[0,T]} E\left| y^\epsilon(t) \right|^2 = O(\epsilon^2),\\\label{thirdTaylor}
\sup_{t\in[0,T]} E\left| x^\epsilon(t)-x(t)-y^\epsilon(t) \right|^2 = o(\epsilon^2).
\end{align}
Moreover, the following expansion holds for the cost functional:
\begin{equation}\label{fourthTaylor}
\begin{array}{cl}
J(u^\epsilon)= J(u)+E\langle h_x(x(T),\alpha(T)),y^\epsilon(t) \rangle + E\displaystyle\int_0^T \left\lbrace \langle f_x(t) ,y^\epsilon(t)\rangle + \delta f(t)\right\rbrace dt + o(\epsilon).
\end{array}
\end{equation}
\end{lemma}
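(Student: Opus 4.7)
The plan is to prove (\ref{firstTaylor})--(\ref{fourthTaylor}) in order, with each estimate building on the previous ones and all heavy lifting delegated to the moment estimate of Lemma \ref{momest} and the Lipschitz estimate of Lemma \ref{Liplemma1}.

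For (\ref{firstTaylor}), I would simply invoke Lemma \ref{Liplemma1} with $u_1=u^\epsilon$, $u_2=u$, noting that $\|u^\epsilon-u\|^4=\epsilon^4\|v\|^4$, and then use $E|\cdot|^2\le (E|\cdot|^4)^{1/2}$. For (\ref{secondTaylor}), the SDE (\ref{yepsilon}) is already in the linear form required by Lemma \ref{momest} with $A=b_x$, $B^j=\sigma_x^j$ (both bounded by the Lipschitz constant in \textbf{(A1)}), $\beta=\delta b$ and $\gamma^j=\delta\sigma^j$. Assumption \textbf{(A1)} gives $|\delta b(t)|+|\delta\sigma(t)|\le K\epsilon|v(t)|$, so after applying Lemma \ref{momest} with $k=1$ the right-hand side is $O(\epsilon^2)$.

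The main work is in (\ref{thirdTaylor}). Setting $\eta^\epsilon:=x^\epsilon-x-y^\epsilon$ and applying the standard integral representation of the increment,
\[
\tilde\varphi_x^\epsilon(t):=\int_0^1\varphi_x\bigl(t,x(t)+\theta(x^\epsilon(t)-x(t)),u^\epsilon(t),\alpha(t-)\bigr)\,d\theta\qquad(\varphi=b,\sigma^j),
\]
one obtains that $\eta^\epsilon$ solves a linear SDE
\[
d\eta^\epsilon(t)=\bigl[b_x(t)\eta^\epsilon(t)+\beta^\epsilon(t)\bigr]dt+\sum_j\bigl[\sigma_x^j(t)\eta^\epsilon(t)+\gamma^{j,\epsilon}(t)\bigr]dW^j(t),\quad\eta^\epsilon(0)=0,
\]
where $\beta^\epsilon(t)=(\tilde b_x^\epsilon(t)-b_x(t))(x^\epsilon(t)-x(t))$ and similarly for $\gamma^{j,\epsilon}$. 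Lemma \ref{momest} with $k=1$ reduces the task to showing $\int_0^T E|\beta^\epsilon(t)|^2\,dt=o(\epsilon^2)$ and similarly for $\gamma^{j,\epsilon}$. Using \textbf{(A2)} inside the integral defining $\tilde b_x^\epsilon$ yields
\[
|\tilde b_x^\epsilon(t)-b_x(t)|\le \tfrac{L}{2}|x^\epsilon(t)-x(t)|+\bar\omega(\epsilon|v(t)|),
\]
so by Cauchy--Schwarz,
\[
E|\beta^\epsilon(t)|^2\le K\,E|x^\epsilon(t)-x(t)|^4+K\bigl(E\bar\omega(\epsilon|v(t)|)^4\bigr)^{1/2}\bigl(E|x^\epsilon(t)-x(t)|^4\bigr)^{1/2}.
\]
Lemma \ref{Liplemma1} controls both $E|x^\epsilon-x|^4$ terms by $K\epsilon^4$, contributing $O(\epsilon^4)=o(\epsilon^2)$ from the first summand, while for the second I would apply dominated convergence: $\bar\omega(\epsilon|v(t)|)\to 0$ pointwise as $\epsilon\downarrow 0$, and the dominant comes from boundedness of the first-order derivatives implicit in \textbf{(A1)} (so $\bar\omega$ is bounded on bounded sets and $\bar\omega(\epsilon|v(t)|)^4\le K(1+|v(t)|^4)\in L^1$). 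This is the step I expect to be the main obstacle, because the uniform integrability of $\bar\omega(\epsilon|v|)^4$ requires some care.

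Finally, for (\ref{fourthTaylor}) I would split $J(u^\epsilon)-J(u)$ into the terminal and running parts. For the terminal part, Taylor expansion gives
\[
h(x^\epsilon(T),\alpha(T))-h(x(T),\alpha(T))=\langle h_x(x(T),\alpha(T)),x^\epsilon(T)-x(T)\rangle+R_1^\epsilon,
\]
with $R_1^\epsilon$ controlled via \textbf{(A4)} and Cauchy--Schwarz to be $o(\epsilon)$ after invoking (\ref{firstTaylor}). Writing $x^\epsilon(T)-x(T)=y^\epsilon(T)+\eta^\epsilon(T)$ and using (\ref{thirdTaylor}) plus the $L^2$-boundedness of $h_x(x(T),\alpha(T))$ absorbs the $\eta^\epsilon(T)$ contribution into $o(\epsilon)$. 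For the running cost, decompose
\[
f(t,x^\epsilon,u^\epsilon,\alpha)-f(t,x,u,\alpha)=\bigl[f(t,x^\epsilon,u^\epsilon,\alpha)-f(t,x,u^\epsilon,\alpha)\bigr]+\delta f(t),
\]
expand the first bracket by $f_x$ at $(t,x,u,\alpha)$, and control the remainder using the same $\bar\omega$-plus-Cauchy--Schwarz argument as above together with the Lipschitz bound on $f$ in $u$ (\textbf{(A3)}). Collecting the linear terms produces the claimed Taylor expansion.
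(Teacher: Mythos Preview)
Your proposal is correct and follows essentially the same route as the paper: cast $x^\epsilon-x$, $y^\epsilon$, and $x^\epsilon-x-y^\epsilon$ as solutions of linear SDEs, apply Lemma~\ref{momest}, and control the remainders with \textbf{(A2)}/\textbf{(A4)} plus Cauchy--Schwarz/H\"older. The only notable difference is at the step you flagged: instead of dominated convergence, the paper uses the elementary bound $\bar\omega(r)\le\rho+K_\rho r$ (valid for any modulus of continuity and any $\rho>0$), which yields $\int_0^T E\,\bar\omega(\epsilon|v(t)|)^4\,dt\le K(\rho^4+K_\rho^4\epsilon^4)$ and hence $o(1)$ by choosing $\rho$ small, sidestepping the dominant issue; your DCT argument also goes through once you note that \textbf{(A1)} forces $|\varphi_x|\le K$, so $\bar\omega$ can be taken bounded without loss and a constant serves as dominant. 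A cosmetic difference is that the paper writes the SDE for $\zeta^\epsilon:=x^\epsilon-x-y^\epsilon$ with leading coefficient $\tilde b_x^\epsilon$ and inhomogeneity $(\tilde b_x^\epsilon-b_x)y^\epsilon$, whereas you use $b_x$ and $(\tilde b_x^\epsilon-b_x)(x^\epsilon-x)$; both are equivalent rearrangements of $\tilde b_x^\epsilon\xi^\epsilon-b_x y^\epsilon$.
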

\begin{proof}
For simplicity, we carry out the proof only for the case $ n=m=1 $.
\\

\noindent \textit{Proof of \eqref{firstTaylor}}.
Let $ \xi^\epsilon(t)\triangleq x^\epsilon(t)-x(t) $. The we have
\begin{equation}\label{xiepsilon}
\left\{
\begin{array}{cl}    
    d\xi^\epsilon(t) &= \left\{ \tilde{b}^\epsilon_x(t)\xi^\epsilon(t)+\delta b(t) \right\} dt + \left\{ \tilde{\sigma}_x^\epsilon(t)\xi^\epsilon(t)+\delta\sigma(t) \right\} dW(t)\\
    
    \xi(0) &= 0, \alpha(0)=i_0.
\end{array}\right.
\end{equation}
where for $\phi=b$ and $\sigma$, 
\begin{equation}\label{bsigmatilda}
\begin{array}{cl}    
    \tilde{\phi}^\epsilon_x(t) \triangleq \displaystyle\int^1_0 \phi_x(t,x(t)+\theta(x^\epsilon(t)-x(t)),u^\epsilon(t),\alpha(t-))d\theta.
\end{array}
\end{equation}
By Lemma \ref{momest}, since $ \tilde{b}^\epsilon_x(t) $, and $ \tilde{\sigma}^\epsilon_x(t) $ are bounded according to assumption \textbf{(A1)}, we obtain
\begin{align*}
\sup_{t\in[0,T]} E\vert \xi^\epsilon(t) \vert^2 & \leq K\int_0^T E \bigg\{ \vert \delta b(s) \vert^2 + \vert \delta \sigma(s) \vert^2 \bigg\} ds\\
& \leq K \epsilon^2 \int^T_0 E \vert v(s)\vert^2 ds \\
& \leq K \epsilon^2 .
\end{align*}
This proves \eqref{firstTaylor}.
\\
\noindent \textit{Proof of \eqref{secondTaylor}}.
Similarly, $ b_x(t) $ and $ \sigma_x(t) $ are bounded according to assumption \textbf{(A1)}. Applying Lemma \ref{momest} to \eqref{yepsilon}, we obtain
\begin{align*}
\sup_{t\in[0,T]} E\vert y^\epsilon(t) \vert^2 & \leq K\int_0^T E \bigg\{ \vert \delta b(s) \vert^2 + \vert \delta \sigma(s) \vert^2 \bigg\} ds\leq K \epsilon^2.
\end{align*}
This proves \eqref{secondTaylor}.
\\

\noindent \textit{Proof of \eqref{thirdTaylor}}.
Let $ \zeta^\epsilon(t)\triangleq x^\epsilon(t)-x(t)-y^\epsilon(t) \equiv \xi^\epsilon(t)-y^\epsilon(t) $. Then, by \eqref{xiepsilon} and \eqref{yepsilon} we have
\begin{align*}
d\zeta^\epsilon(t) = & d\xi^\epsilon(t)-dy^\epsilon(t)\\
                   = & \left\{ \tilde{b}_x^\epsilon(t)\xi^\epsilon(t)-b_x(t)y^\epsilon(t) \right\}dt+ \left\{ \tilde{\sigma}_x^\epsilon(t)\xi^\epsilon(t)-\sigma_x(t)y^\epsilon(t) \right\}dW(t)\\
                   = & \left\{ \tilde{b}_x^\epsilon(t)\zeta^\epsilon(t)+\left[ \tilde{b}_x^\epsilon(t)-b_x(t) \right]y^\epsilon(t) \right\}dt+ \left\{ \tilde{\sigma}_x^\epsilon(t)\zeta^\epsilon(t)+\left[ \tilde{\sigma}_x^\epsilon(t)-\sigma_x(t) \right]y^\epsilon(t) \right\}dW(t)
\end{align*}
Since $ \tilde{b}_x^\epsilon(t) $ and $ \tilde{\sigma}_x^\epsilon(t) $ are bounded by assumption \textbf{(A1)}, applying Lemma \ref{momest} we obtain
\begin{equation}\label{thirdTaylortosub}
\begin{array}{cl}
\sup_{t\in[0,T]} E\vert \zeta^\epsilon(t) \vert^2 \leq  K\displaystyle\int_0^T E\bigg\{ \left| \left[ \tilde{b}_x^\epsilon(t)-b_x(t) \right]y^\epsilon(t) \right|^2 + \left| \left[ \tilde{\sigma}_x^\epsilon(t)-\sigma_x(t) \right]y^\epsilon(t) \right|^2 \bigg\} dt.
\end{array}
\end{equation}
Recall that $ \bar{\omega} $ appearing in \textbf{(A4)} is a modulus of continuity. Thus for any $ \rho>0 $, there exists a constant $ K_{\rho}>0 $ such that 
\begin{equation}\label{modulusofcont}
\bar{\omega}(r)\leq \rho + rK_{\rho}, \ \forall r\geq 0.
\end{equation}
 By H\"older's inequality, \eqref{bsigmatilda}, \eqref{secondTaylor}, \eqref{firstTaylor} and \eqref{modulusofcont}, we have
\begin{align*}
&\int_0^T E\left| \left[ \tilde{b}^\epsilon_x(t)-b_x(t) \right]y^\epsilon(t) \right|^2 dt \\
& \leq \int_0^T \left( E\left| \tilde{b}_x^\epsilon(t)-b_x(t) \right|^4 \right)^{\frac{1}{2}}\left(E\left| y^\epsilon(t) \right|^4\right)^\frac{1}{2} dt\\
& \leq K\int_0^T \left\{E\int_0^1 \left| b_x\left(t,x(t)+\theta\xi^\epsilon(t),u^\epsilon(t),\alpha(t-)\right)-b_x(t)\right|^4 d\theta\right\}^{\frac{1}{2}}\epsilon^2 dt\\
& \leq K\int_0^T \left\{ E\left(\xi^\epsilon(t)^4+ \bar{\omega}(\epsilon v(t))^4\right) \right\}^{\frac{1}{2}}\epsilon^2 dt \\
&\leq K \int_0^T\left\{ \epsilon^4+E[\rho+K_\rho\epsilon |v(t)|]^4 \right\}^\frac{1}{2}dt\epsilon^2.
\end{align*}
Hence the first term in (\ref{thirdTaylortosub})  is $o(\epsilon^2)$. Similarly the second and third terms are also $o(\epsilon^2)$, which gives \eqref{thirdTaylor}.
\\
\noindent \textit{Proof of \eqref{fourthTaylor}}.
By definition of the cost functional \eqref{costfun}, we have
\begin{equation} \label{temp1}
\begin{array}{cl}
&J(u^\epsilon)-J(u)\nonumber \\ 
&= E\left\{ h(x^\epsilon(T),\alpha(T))-h(x(T),\alpha(T)) \right\} \\
&+ \displaystyle E\int_0^T \left\{f(t,x^\epsilon(t),u^\epsilon(t),\alpha(t))-f(t,x(t),u(t),\alpha(t))\right\}dt
\end{array}
\end{equation}
 For the first term on the right side of (\ref{temp1}) we have
\begin{align*}
&E\left\{ h(x^\epsilon(T),\alpha(T))-h(x(T),\alpha(T)) \right\} \\
&= E\int^1_0 \langle h_x(x(T)+\theta\xi^\epsilon(T),\alpha(T)),\xi^\epsilon(T) \rangle d\theta\\                                      
                                           &= E\langle h_x(x(T),\alpha(T)),y^\epsilon(T) \rangle + E\langle h_x(x(T),\alpha(T)),\zeta^\epsilon(T) \rangle\\
                                           &+ E\int^1_0 \langle h_x(x(T)+\theta\xi^\epsilon(T),\alpha(T))-h_x(x(T),\alpha(T)),\xi^\epsilon(T)\rangle d\theta.
\end{align*}
Then, by \eqref{firstTaylor}, \eqref{thirdTaylor}, \textbf{(A4)} and applying H\"{o}lder's inequality, we have
\begin{equation}\label{fourthTaylor1}
E\left\{ h(x^\epsilon(T),\alpha(T))-h(x(T),\alpha(T)) \right\} = E\langle h_x(x(T),\alpha(T)),y^\epsilon(T) \rangle + o(\epsilon).
\end{equation}
For the second term on the right side of (\ref{temp1}) we have
\begin{align*}
E& \int_0^T\left\{ f(t,x^\epsilon(t),u^\epsilon(t),\alpha(t))-f(t,x(t),u(t),\alpha(t)) \right\}dt\\
=& E\int^T_0 \left\{ \int_0^1 \langle f_x(t,x(t)+\theta\xi^\epsilon(t),u^\epsilon(t),\alpha(t)),\xi^\epsilon(t) \rangle d\theta \right\}\\
&+ \left\{ f(t,x(t),u^\epsilon(t),\alpha(t))-f(t,x(t),u(t),\alpha(t)) \right\} dt\\
=& E\int_0^T \left\{ \langle f_x(t),y^\epsilon(t) \rangle + \delta f(t)\right\}\\
&+ \left\{\int_0^1\langle f_x(t,x(t)+\theta\xi^\epsilon(t),u^\epsilon(t),\alpha(t))-f_x(t), y^\epsilon(t) \rangle d\theta\right\}\\
&+ \left\{\int_0^1\langle f_x(t,x(t)+\theta\xi^\epsilon(t),u^\epsilon(t),\alpha(t)),\zeta^\epsilon(t) \rangle d\theta \right\}dt
\end{align*}
Then, using \textbf{(A4)} and by a similar argument as in the proof of \eqref{thirdTaylor}, we have
\begin{equation}\label{fourthTaylor2}
\begin{array}{cl}
&E\displaystyle\int_0^T\left\{ f(t,x^\epsilon(t),u^\epsilon(t),\alpha(t))-f(t,x(t),u(t),\alpha(t)) \right\}dt \\
&=E\displaystyle\int_0^T \left\{ \langle f_x(t),y^\epsilon(t) \rangle + \delta f(t)\right\}+ o(\epsilon).
\end{array}
\end{equation}
\eqref{fourthTaylor} follows from \eqref{fourthTaylor1} and \eqref{fourthTaylor2}.
\end{proof}

\subsection{Duality Analysis}
\begin{lemma}
Let assumptions \textbf{(A1)}-\textbf{(A4)}  hold. Let $ y^\epsilon $ be the solution of \eqref{yepsilon} and $ (p,q,s) $ be the adapted solution of \eqref{ajointeqn}. Then
\begin{equation}\label{duality}
\begin{array}{cc}
E\langle p(T),y^\epsilon(T) \rangle = E\left. \displaystyle\int_0^T \right\{ \langle p(t),\delta b(t) \rangle + \langle f_x(t),y^\epsilon(t) \rangle+ tr\left( q(t)^\intercal\delta\sigma(t) \right)\bigg\} dt
\end{array}
\end{equation}
\end{lemma}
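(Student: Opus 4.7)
The plan is to apply Itô's product formula to $\langle p(t), y^\epsilon(t)\rangle$ on $[0,T]$ and use the specific structure of the Hamiltonian $H$ to force the $y^\epsilon$-dependent drift terms to cancel, leaving only the $\delta b$, $\delta\sigma$ and $f_x$ terms on the right-hand side. Since $y^\epsilon$ is a continuous semimartingale (driven only by $dt$ and $dW$), while $p$ has a continuous part driven by $W$ plus a purely discontinuous martingale part $s\bullet dQ$, the cross-variation $[p,y^\epsilon]$ only picks up the Brownian contribution. Concretely,
\begin{align*}
d\langle p,y^\epsilon\rangle
= \langle y^\epsilon,dp\rangle + \langle p,dy^\epsilon\rangle + \sum_{j=1}^{m}\langle q^j(t),\sigma_x^j(t)y^\epsilon(t)+\delta\sigma^j(t)\rangle\,dt,
\end{align*}
where $q^j$ is the $j$-th column of $q$. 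I would be explicit that $y^\epsilon(t-)=y^\epsilon(t)$ so the integrand against $s\bullet dQ$ is just $y^{\epsilon,\intercal}(t)s(t)\bullet dQ(t)$, a local martingale.

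Next I would substitute the drifts from \eqref{yepsilon} and \eqref{ajointeqn} and expand $H_x$ using \eqref{Hamiltonian}:
\begin{align*}
H_x(t,x,u,i,p,q) = -f_x(t,x,u,i) + b_x^\intercal(t,x,u,i)p + \nabla_x\,\mathrm{tr}(\sigma^\intercal(t,x,u,i)q).
\end{align*}
Collecting $dt$ terms, the contribution $-\langle y^\epsilon,H_x\rangle$ from $dp$ produces $\langle y^\epsilon,f_x\rangle - \langle b_x y^\epsilon,p\rangle - \sum_{i,j,k}y^\epsilon_i(\sigma_x^j)_{ki}q_{kj}$. The term $\langle p,b_x y^\epsilon\rangle$ from $\langle p,dy^\epsilon\rangle$ exactly cancels $\langle b_x y^\epsilon,p\rangle$, and an index relabelling shows that $\sum_{i,j,k}q_{ij}(\sigma_x^j)_{ik}y^\epsilon_k$ (from the cross variation) cancels $\sum_{i,j,k}y^\epsilon_i(\sigma_x^j)_{ki}q_{kj}$. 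What survives in the drift is precisely
\begin{align*}
\langle f_x(t),y^\epsilon(t)\rangle + \langle p(t),\delta b(t)\rangle + \mathrm{tr}\bigl(q(t)^\intercal\delta\sigma(t)\bigr).
\end{align*}

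Finally I would integrate from $0$ to $T$, use $y^\epsilon(0)=0$, and take expectations. The remaining stochastic integrals are $\int_0^T(y^{\epsilon,\intercal}q + p^\intercal(\sigma_x y^\epsilon+\delta\sigma))\,dW$ and $\int_0^T y^{\epsilon,\intercal}s\bullet dQ$, both of which vanish in expectation once they are shown to be genuine martingales. That is the main technical point: I would verify the needed square-integrability using Lemma \ref{momest} and \eqref{secondTaylor} (giving $\sup_tE|y^\epsilon(t)|^2<\infty$, in fact with fourth moments available via Lemma \ref{Liplemma1}) together with the fact that $(p,q,s)\in\mathbb{S}^2([0,T])\times L^2(W,[0,T])\times L^2(Q,[0,T])$ from Theorem \ref{RSBSDEtheorem}, plus the linear growth of $\sigma_x,b_x,\delta b,\delta\sigma$ implied by assumptions \textbf{(A1)}--\textbf{(A2)}. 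If direct integrability is awkward, a standard localization via stopping times $\tau_n\uparrow T$ kills the local-martingale terms at level $n$ and a dominated convergence argument passes to the limit, giving \eqref{duality}. The cancellation between the Hamiltonian derivative and the cross-variation term is the conceptual heart of the argument; the bookkeeping of tensor indices in $\nabla_x\mathrm{tr}(\sigma^\intercal q)$ is the only place that needs care.
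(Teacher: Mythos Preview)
Your proposal is correct and follows exactly the approach the paper uses: the paper's proof is the single sentence ``Applying It\^o's lemma and taking expectation immediately lead to \eqref{duality},'' and your write-up is a careful expansion of precisely that computation, including the cancellation of the $y^\epsilon$-linear drift terms via the structure of $H_x$ and the verification that the Brownian and $Q$-stochastic integrals are true martingales.
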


\begin{proof}
Applying Ito's lemma and taking expectation immediately lead to  \eqref{duality}.
\end{proof}
Now we are able to give the following lemma, which is of great importance.
\begin{lemma}\label{importlemma}
Let assumptions \textbf{(A1)}-\textbf{(A4)}  hold. For any $ \varepsilon > 0 $ and $ v \in L^4(S;\mathbb{R}^K) $, define 
\begin{equation*}
u^\epsilon(t) \triangleq u(t)+\epsilon v(t) \textit{ for } \forall t\in [0,T].
\end{equation*}
Then we have
\begin{equation*}
\begin{array}{cl}
J(u^\epsilon)-J(u) \\
= E\displaystyle\int_0^T (-H(t,x(t),u^\epsilon(t),\alpha(t-),p(t),q(t)))
-(-H(t,x(t),u(t),\alpha(t-),p(t),q(t))) dt +o(\epsilon)
\end{array}
\end{equation*}
\end{lemma}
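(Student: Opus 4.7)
The plan is to combine the Taylor expansion formula \eqref{fourthTaylor} with the duality identity \eqref{duality} and the terminal condition of the adjoint equation \eqref{ajointeqn}. The end result should be a cancellation of the $\langle f_x(t), y^\epsilon(t)\rangle$ term, leaving only the perturbation quantities $\delta f$, $\delta b$, $\delta\sigma$ paired against the adjoint processes, which is exactly the variation of $-H$ in the control.

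First, I would recall from \eqref{fourthTaylor} that
\[
J(u^\epsilon) - J(u) = E\langle h_x(x(T),\alpha(T)), y^\epsilon(T)\rangle + E\int_0^T \bigl(\langle f_x(t), y^\epsilon(t)\rangle + \delta f(t)\bigr)\,dt + o(\epsilon).
\]
Next, I would use the terminal condition $p(T) = -h_x(x(T),\alpha(T))$ of the adjoint BSDE \eqref{ajointeqn} to rewrite the first term as $-E\langle p(T), y^\epsilon(T)\rangle$, and then apply the duality identity \eqref{duality} to get
\[
E\langle h_x(x(T),\alpha(T)), y^\epsilon(T)\rangle = -E\int_0^T\bigl(\langle p(t),\delta b(t)\rangle + \langle f_x(t), y^\epsilon(t)\rangle + \mathrm{tr}(q(t)^\intercal\delta\sigma(t))\bigr)\,dt.
\]
Plugging this back into the expansion causes the $\langle f_x(t), y^\epsilon(t)\rangle$ integrals to cancel exactly, yielding
\[
J(u^\epsilon) - J(u) = E\int_0^T\bigl(\delta f(t) - \langle p(t),\delta b(t)\rangle - \mathrm{tr}(q(t)^\intercal\delta\sigma(t))\bigr)\,dt + o(\epsilon).
\]

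Finally I would identify the integrand: by the definition \eqref{Hamiltonian} of the Hamiltonian, for fixed $(t,x,i,p,q)$ the map $u \mapsto -H(t,x,u,i,p,q) = f(t,x,u,i) - b^\intercal(t,x,u,i)p - \mathrm{tr}(\sigma^\intercal(t,x,u,i)q)$ has increment (from $u(t)$ to $u^\epsilon(t)$, with $x$ and $i$ frozen at $x(t), \alpha(t-)$) equal to $\delta f(t) - \langle p(t),\delta b(t)\rangle - \mathrm{tr}(q(t)^\intercal\delta\sigma(t))$. This gives precisely the desired form of the difference $J(u^\epsilon)-J(u)$.

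The computation is essentially algebraic once the two key ingredients are in hand, so there is no significant analytical obstacle at this stage; all of the delicate work (moment estimates on $y^\epsilon, \xi^\epsilon, \zeta^\epsilon$, the $o(\epsilon^2)$ control of $\zeta^\epsilon$, and the well-posedness of the adjoint BSDE that legitimises Itô's formula in the duality step) has already been absorbed into Lemma \ref{TaylorExp} and identity \eqref{duality}. The only thing to be careful about is that the Hamiltonian is evaluated along $\alpha(t-)$ rather than $\alpha(t)$, but this is consistent since $f$ in the cost functional uses $\alpha(t)$ and $\alpha(t-)=\alpha(t)$ outside a countable set of jump times, so the integrals coincide almost everywhere in $t$.
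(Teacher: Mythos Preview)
Your proposal is correct and follows essentially the same approach as the paper: start from the Taylor expansion \eqref{fourthTaylor}, replace $E\langle h_x(x(T),\alpha(T)),y^\epsilon(T)\rangle$ by $-E\langle p(T),y^\epsilon(T)\rangle$ via the terminal condition of \eqref{ajointeqn}, invoke the duality identity \eqref{duality} to cancel the $\langle f_x(t),y^\epsilon(t)\rangle$ terms, and recognise the remaining integrand as the increment of $-H$ in the control variable. Your closing remark about $\alpha(t-)$ versus $\alpha(t)$ is a harmless addition.
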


\begin{proof}
According to Lemma \ref{TaylorExp}, we have
\begin{align*}
&J(u^\epsilon)-J(u) \\
&= E\langle h_x(x(T),\alpha(T)),y^\epsilon(T) \rangle 
                                 + E\displaystyle\int_0^T \left\{\langle f_x(t),y^\epsilon(t)\rangle + \delta f(t)\right\} dt + o(\epsilon)\\
                                 &= E\langle -p(T),y^\epsilon(T) \rangle 
                                 + E\displaystyle\int_0^T \left\{\langle f_x(t),y^\epsilon(t)\rangle + \delta f(t)\right\} dt + o(\epsilon).
\end{align*}
Applying \eqref{duality}, we obtain
\begin{align*}
&J(u^\epsilon)-J(u)= E\displaystyle\int_0^T -\bigg\{ \langle p(t),\delta b(t) \rangle + tr\left(q(t)^\intercal\delta\sigma(t)\right) -\delta f(t) \bigg\} dt+o(\epsilon)\\
&= E\displaystyle\int_0^T (-H(t,x(t),u^\epsilon(t),\alpha(t-),p(t),q(t)))-(-H(t,x(t),u(t),\alpha(t-),p(t),q(t))) dt +o(\epsilon)
\end{align*}
\end{proof}

\section{Proof of the Main Theorems}
\subsection{Proof of Theorem \ref{WNSMP} }
We follow the technique developed in \cite{clarke:shadowprices}.
Given an optimal 5-tuple $ (\bar{x},\bar{u},\bar{p},\bar{q},\bar{s}) $, define a functional $ \mathcal{H}^{\bar{u}}: L^4(S;\mathbb{R}^k)\rightarrow \mathbb{R} $ as following
\begin{equation*}
\mathcal{H}^{\bar{u}}(u) = E\int_0^T -H(t,\bar{x}(t),u(t),\alpha(t-),\bar{p}(t),\bar{q}(t))dt.
\end{equation*}
By a similar argument as in Lemma \ref{lemmalocalLip}, it can be proved that the functional $ \mathcal{H}^{\bar{u}} $ is also locally Lipschitz on $  L^4(S;\mathbb{R}^k) $. Next, we define  Clarke's generalized gradient of the functionals $ J $ and $ \mathcal{H}^{\bar{u}} $ at $ \bar{u} $ and explore their properties.
\begin{definition}\label{defJHtangent}
Let $L^{\frac{4}{3}}(S;\mathbb{R}^k)$ denote the dual space of $ L^4(S;\mathbb{R}^k) $ and $ \langle\cdot ,\cdot\rangle $ denote the duality pairing between $L^4(S;\mathbb{R}^k) $ and $L^{\frac{4}{3}}(S;\mathbb{R}^k)$. Given an admissible control $ \bar{u}\in L^4(S;\mathbb{R}^k) $,  Clarke's generalized gradient of $ J $ at $ \bar{u} $, denoted by $ \partial J(\bar{u}) $, is the set of all $ \zeta\in L^{\frac{4}{3}}(S;\mathbb{R}^k) $ satisfying 
\begin{equation}\label{ClarkeJ}
J^o(\bar{u};v)=\limsup_{u\rightarrow\bar{u},\epsilon\rightarrow 0}\dfrac{J(u+\epsilon v)-J(u)}{\epsilon} \geq \langle v,\zeta \rangle,
\end{equation}
for all $v\in L^4(S;\mathbb{R}^k)$.
  Clarke's generalized gradient of $ \mathcal{H}^{\bar{u}} $ at $ \bar{u} $ is defined similarly. 
\end{definition}

Then, according to Lemma \ref{importlemma}, given $ u\in L^4(S;\mathbb{R}^k) $, for any $ \epsilon >0 $ and $ v\in L^4(S;\mathbb{R}^k) $ such that $ u+\epsilon v\in L^4(S;\mathbb{R}^k) $, we have
\begin{equation*}
J(u+\epsilon v)-J(u) = \mathcal{H}^{\bar{u}}(u+\epsilon v) - \mathcal{H}^{\bar{u}}(u) +o(\epsilon).
\end{equation*}
Hence, we have 
\begin{equation*}
J^o(\bar{u};v) = (\mathcal{H}^{\bar{u}})^o(\bar{u};v), \ \textit{for } \forall v\in L^4(S;\mathbb{R}^k).
\end{equation*}
Therefore, by Definition \ref{defJHtangent}, we conclude
\begin{equation*}
\partial J(\bar{u}) = \partial \mathcal{H}^{\bar{u}}(\bar{u}).
\end{equation*}
Since $ \bar{u} $ is an optimal control on $ \mathcal{U}_{ad} $, according to Theorem \ref{NeceOptCond},
\begin{equation}\label{optcondnormalcone}
0 \in \partial J(\bar{u})+N_{\mathcal{U}_{ad}}(\bar{u}) = \partial\mathcal{H}^{\bar{u}}(\bar{u})+N_{\mathcal{U}_{ad}}(\bar{u}).
\end{equation}

To characterize Clarke's tangent cone in the $ L^4(S;\mathbb{R}^k) $ space, we recall \cite[Theorem 8.5.1]{aubin:setvaluedanalysis}.
Let $ (\Omega,S,\mu) $ be a complete $ \sigma $-finite measure space and X be a separable Banach space. Consider a measurable set-valued map $K: \Omega\leadsto X $.
We associate with it the subset $ \mathcal{K} \subset L^p(\Omega;X,\mu) $ of selections defined by
\begin{align*}
\mathcal{K}:=\{ x\in L^p(\Omega;X,\mu) \vert \textit{ for almost all }\omega\in\Omega, x(\omega)\in K(\omega) \}.
\end{align*}
\begin{theorem}\label{TangLeb}
Assume that the set-valued map K is measurable and has closed images. Then for every $ x\in\mathcal{K} $, the set valued map
$\omega \rightarrow T_{K(\omega)}^b(x(\omega))$
is measurable. Furthermore
\begin{equation*}
\{ v\in L^p(\Omega;X,\mu)\vert \textit{ for almost all  } \omega, v(\omega)\in T_{K(\omega)}^b(x(\omega)) \} \subset T_{\mathcal{K}}^b(x).
\end{equation*}
\end{theorem}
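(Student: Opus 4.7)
The plan is to prove the measurability and the selection-inclusion separately, both resting on the distance-function characterization of the adjacent cone together with a measurable selection theorem.

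For the measurability claim, I start from
$$T^b_{K(\omega)}(x(\omega)) = \{v\in X : \lim_{h\downarrow 0} d_{K(\omega)}(x(\omega)+hv)/h = 0\}.$$
Since $d_{K(\omega)}$ is $1$-Lipschitz in its argument, the limit over $h\downarrow 0^+$ can be replaced by a $\limsup$ along the countable sequence $h_n = 1/n$, so the graph of $\omega \mapsto T^b_{K(\omega)}(x(\omega))$ becomes a countable Boolean combination of sublevel sets of $(\omega,v)\mapsto d_{K(\omega)}(x(\omega)+h_n v)$. To show these are jointly $(S\otimes\mathcal{B}(X))$-measurable, I would invoke the Castaing representation of the measurable closed-valued map $K$: writing $K(\omega)=\overline{\{k_m(\omega)\}_{m\ge 1}}$ for measurable selections $k_m$, the function $d_{K(\omega)}(y)=\inf_m\|y-k_m(\omega)\|$ is Carath\'eodory, hence jointly measurable on $\Omega\times X$ by the standard lemma in a separable Banach space. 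Composing with the continuous map $(\omega,v)\mapsto x(\omega)+h_n v$ preserves joint measurability, and measurability of $\omega\mapsto T^b_{K(\omega)}(x(\omega))$ follows.

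For the inclusion, fix $v\in L^p(\Omega;X,\mu)$ with $v(\omega)\in T^b_{K(\omega)}(x(\omega))$ a.e. The goal is $d_{\mathcal{K}}(x+hv)/h\to 0$ as $h\downarrow 0^+$, and the strategy is to produce, for each $h\in(0,1]$, a near-best approximation $c_h\in\mathcal{K}$ of $x+hv$. The set-valued map
$$\omega\leadsto\{c\in K(\omega):\|x(\omega)+hv(\omega)-c\|\le(1+h)\,d_{K(\omega)}(x(\omega)+hv(\omega))\}$$
has nonempty closed images (if the distance vanishes, $x+hv\in K$ by closedness; otherwise any $\varepsilon$-minimizer with $\varepsilon = h\,d_{K(\omega)}(x+hv)$ lies inside) and is measurable, so the Kuratowski--Ryll-Nardzewski theorem supplies a measurable selection $c_h$. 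Since $x(\omega)\in K(\omega)$, we have $d_{K(\omega)}(x+hv)\le h\|v(\omega)\|$, which both certifies $c_h\in L^p$ (hence $c_h\in\mathcal{K}$) and yields the pointwise bound
$$\bigl(\|x(\omega)+hv(\omega)-c_h(\omega)\|/h\bigr)^p \le (1+h)^p\|v(\omega)\|^p \le 2^p\|v(\omega)\|^p.$$
The left side converges to $0$ a.e.\ by the tangent cone hypothesis, and $2^p\|v\|^p\in L^1$ is an $h$-independent dominator; Lebesgue dominated convergence then gives $\|x+hv-c_h\|_{L^p}/h\to 0$, and $d_{\mathcal K}(x+hv)\le\|x+hv-c_h\|_{L^p}$ yields the claim.

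The main obstacle is engineering the near-optimal selection so that it simultaneously (i) lands in $L^p$ (admissibility in $\mathcal K$), (ii) admits an $h$-independent $L^p$-dominator for the rescaled error, and (iii) converges pointwise to $0$. A purely additive slack $+\varepsilon$ would fail (ii); the multiplicative slack $(1+h)$ above exploits the ambient bound $d_{K(\omega)}(x+hv)\le h\|v\|$ (coming from $x\in\mathcal{K}$) to make all three hold at once. The secondary technical point is verifying measurability and nonemptiness of the near-optimal set-valued map so Kuratowski--Ryll-Nardzewski applies.
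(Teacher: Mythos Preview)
The paper does not supply its own proof of this theorem: it is quoted verbatim as \cite[Theorem 8.5.1]{aubin:setvaluedanalysis} and used as a black box in the proof of Theorem~\ref{WNSMP}. Your argument is correct and is essentially the proof given in Aubin--Frankowska. The inclusion half in particular---building, for each $h$, a measurable near-projection $c_h$ onto $K(\omega)$ via Kuratowski--Ryll-Nardzewski, bounding $\|x+hv-c_h\|/h$ by $(1+h)\|v\|$, and passing to the limit by dominated convergence---is exactly their route; your multiplicative slack $(1+h)$ is a clean way to get an $h$-free dominator while keeping nonemptiness.

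Two small points worth tightening. First, your reduction of $\lim_{h\downarrow 0}d_{K(\omega)}(x(\omega)+hv)/h$ to the sequence $h_n=1/n$ needs slightly more than ``$d_K$ is $1$-Lipschitz'': one must also control the denominator, e.g.\ for $h\in[h_{n+1},h_n]$ one has $d_K(x+hv)/h\le (1+1/n)\,d_K(x+h_n v)/h_n + \|v\|/n$. Second, you deduce measurability of the set-valued map from measurability of its graph; this step uses the completeness of $(\Omega,S,\mu)$, which is part of the hypotheses. The paper, in its later use of this result, instead writes the adjacent cone as the countable set-theoretic formula
\[
T^b_{K(\omega)}(x(\omega))=\bigcap_{n>0}\,\mathrm{cl}\Bigl(\bigcup_{\alpha\in\mathbb{Q}_+}\ \bigcap_{h\in(0,\alpha]\cap\mathbb{Q}_+}\frac{K(\omega)-x(\omega)}{h}+\tfrac{1}{n}B\Bigr)
\]
and appeals to the stability of measurable set-valued maps under countable unions, intersections and closures; either packaging works.
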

Returning to our proof, since $ U $ is convex, by definition, $ \mathcal{U}_{ad} $ is also a convex subset of $ L^4(S;\mathbb{R}^k) $. Therefore, by Theorem \ref{AdjacentTangent} and Theorem \ref{TangLeb}, we obtain
\begin{equation}\label{AccTangLeb}
T_{\mathcal{U}_{ad}}(\bar{u}) \supset \{ v\in L^4(S;\mathbb{R}^k) \vert v(\omega,t)\in T_{U}(\bar{u}(\omega,t)) \ \mu\textit{-almost surely}\}.
\end{equation}
The optimality  condition \eqref{optcondnormalcone} together with \eqref{AccTangLeb} implies that $ \exists \zeta \in L^{\frac{4}{3}}(S;\mathbb{R}^k) $ such that
\begin{equation}\label{twoinequalities}
\left\{
\begin{array}{cl}
&E\displaystyle\int_0^T\langle \zeta(t),v(t) \rangle dt \leq 0 \textit{ for } \forall v\in L^4(S;\mathbb{R}^k)\textit{ such that } \\
&v(t)\in T_{U}(\bar{u}(t)) \ \textit{ for every } t\in[0,T], \mathbb{P}\textit{-almost surely} \\
&(\mathcal{H}^{\bar{u}})^o(\bar{u};v) + E\displaystyle\int_0^T \langle\zeta(t),v(t) \rangle dt \geq 0 \textit{ for } \forall v \in L^4(S;\mathbb{R}^k).
\end{array}
\right.
\end{equation}

Now, we recall a version of the measurable selection theorem in \cite{aubin:setvaluedanalysis}. 
\begin{definition}\cite[Definition 8.1.2]{aubin:setvaluedanalysis}
Let $ (\Omega,\mathcal{A}) $ be a measurable space and $ X $ be a complete separable metric space. Consider a set-valued map $ F: \Omega\leadsto X $. A measurable map $ f:\Omega\mapsto X $ satisfying 
\begin{align*}
\forall \omega \in \Omega, f(\omega)\in F(\omega)
\end{align*}
is called a measurable selection of $ F $.
\end{definition}
\begin{theorem}\cite[Theorem 8.1.3]{aubin:setvaluedanalysis}\label{measelectionthm}
Let $ X $ be a complete separable metric space, $ (\Omega, \mathcal{A}) $ a measurable space, $ F $ a measurable set-valued map from $ \Omega $ to closed nonempty subsets of $ X $. Then there exists a measurable selection of $ F $.
\end{theorem}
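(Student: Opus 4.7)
The plan is to prove the Kuratowski--Ryll-Nardzewski selection theorem by constructing a Cauchy sequence of measurable $\tfrac{1}{2^n}$-approximate selections and passing to the limit. Recall that $F:\Omega \leadsto X$ is measurable precisely when $F^{-1}(U) := \{\omega : F(\omega)\cap U \neq \emptyset\} \in \mathcal{A}$ for every open $U \subset X$; this is the hypothesis I will actually use. Using separability of $X$, I will fix a countable dense sequence $\{x_k\}_{k\geq 1}$ in $X$ once and for all.

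First I would construct the base approximation. For each $n \geq 1$, I want a measurable $f_n : \Omega \to X$ with $d(f_n(\omega), F(\omega)) < 2^{-n}$ everywhere. The idea is to set $f_n(\omega) = x_{k(\omega)}$, where $k(\omega)$ is the smallest index $k$ such that $F(\omega) \cap B(x_k, 2^{-n}) \neq \emptyset$; such a $k$ exists because $F(\omega)$ is nonempty and $\{x_k\}$ is dense. Measurability of $k(\cdot)$ follows because $\{k(\omega) = k\}$ equals $F^{-1}(B(x_k,2^{-n})) \setminus \bigcup_{j<k} F^{-1}(B(x_j, 2^{-n}))$, and each of these sets is measurable by hypothesis.

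The main work, and the main obstacle, is to upgrade this to a Cauchy sequence. I would build the $f_n$ inductively, requiring at stage $n$ both
\[
d(f_n(\omega), F(\omega)) < 2^{-n} \quad \text{and} \quad d(f_n(\omega), f_{n-1}(\omega)) < 2^{-(n-1)} + 2^{-n}.
\]
To carry out the inductive step, I replace $F(\omega)$ by the smaller set-valued map $G_n(\omega) := F(\omega) \cap \overline{B}(f_{n-1}(\omega), 2^{-(n-1)} + 2^{-n})$, which is still nonempty (because $d(f_{n-1}, F) < 2^{-(n-1)}$) and closed. The delicate part is verifying that $G_n$ is measurable: I would use the characterization $G_n^{-1}(U) = \bigcup_k \bigl(F^{-1}(U \cap B(x_k, r_k)) \cap \{d(x_k, f_{n-1}) < s_k\}\bigr)$ for suitable radii, which reduces to measurability of $F$ and of the continuous maps $\omega \mapsto d(x_k, f_{n-1}(\omega))$. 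Once $G_n$ is shown measurable with nonempty closed values, I apply the base approximation construction of the previous paragraph to $G_n$ at scale $2^{-n}$, producing $f_n$ with the two required properties.

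Finally, with $f_n$ Cauchy uniformly in $\omega$, $f(\omega) := \lim_n f_n(\omega)$ exists and is measurable as the pointwise limit of measurable maps into the complete metric space $X$. Since $d(f_n(\omega), F(\omega)) \to 0$ and $F(\omega)$ is closed, $f(\omega) \in F(\omega)$ for every $\omega$, so $f$ is the desired measurable selection. I expect the genuine technical friction to be entirely in the inductive step, specifically the bookkeeping needed to show that intersecting $F$ with the time-varying closed ball around $f_{n-1}$ preserves measurability of the resulting set-valued map; once this is in place, the limit argument is routine.
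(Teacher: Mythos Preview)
The paper does not prove this statement at all: it is quoted verbatim from Aubin--Frankowska as Theorem~8.1.3 and used as a black box in the proof of Theorem~\ref{WNSMP}. So there is no ``paper's own proof'' to compare against.

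Your outline is the standard Kuratowski--Ryll-Nardzewski argument and is correct. One simplification worth noting: rather than passing through the auxiliary map $G_n(\omega)=F(\omega)\cap\overline{B}(f_{n-1}(\omega),r)$ and then worrying about its measurability, you can collapse the inductive step into a single selection. Since each $f_{n-1}$ is countably valued in $\{x_k\}$, define directly
\[
k_n(\omega)=\min\bigl\{k:\ F(\omega)\cap B(x_k,2^{-n})\neq\emptyset\ \text{and}\ d\bigl(x_k,f_{n-1}(\omega)\bigr)<2^{-(n-1)}+2^{-n}\bigr\},
\]
and set $f_n(\omega)=x_{k_n(\omega)}$. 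The set $\{k_n=k\}$ is then visibly the intersection of $F^{-1}(B(x_k,2^{-n}))$ (measurable by hypothesis), $f_{n-1}^{-1}(B(x_k,2^{-(n-1)}+2^{-n}))$ (measurable since $f_{n-1}$ is), and the complement of the union of the earlier such sets. This bypasses entirely the ``bookkeeping'' you flag about intersecting $F$ with a moving closed ball, which is genuinely awkward because weak measurability of $F$ does not immediately give measurability of $F^{-1}(U\cap C)$ for $C$ closed. Your sketch $G_n^{-1}(U)=\bigcup_k\bigl(F^{-1}(U\cap B(x_k,r_k))\cap\{d(x_k,f_{n-1})<s_k\}\bigr)$ is not quite right as written, but the direct selection above makes the point moot. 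The limit argument is exactly as you describe.
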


Return to our problem. Fix $ u\in\mathcal{U}_{ad} $ and $ (\omega,t)\in S $. Let $ \mathbb{Q}_+ $ denote the set of all strictly positive rationals. Following the argument in \cite[Page 325]{aubin:setvaluedanalysis} , we have
\begin{align*}
T_U(u(\omega,t)) &= T_U^b(u(\omega,t))
                                  =\bigcap_{n>0} cl\left( \bigcup_{\alpha\in\mathbb{Q}_+} \bigcap_{h\in[0,\alpha]\cap \mathbb{Q}_+}\dfrac{U-u(\omega,t)}{h}+\frac{1}{n}B \right),
\end{align*}
where $ B $ denotes the unit ball centred at 0. By \cite[Theorem 8.2.4]{aubin:setvaluedanalysis}, we conclude that the set-valued function $ T_{U}(\bar{u}) $ is measurable.

For the first inequality in \eqref{twoinequalities}, let $ M > 0 $ and define $ \bar{B}_M \triangleq \{ v\in \mathbb{R}^k : \|v\|\leq M \} $. For any positive integer $ n $, define a set-valued function $ \Pi_n^M $ as follows
\begin{equation*}
\Pi_n^M (\omega,t) = \left\{ \begin{array}{cl}
          &\{0\}, \ \textit{ if  } \langle \zeta(\omega,t),v \rangle < \dfrac{1}{n}, \ \forall v\in \bar{B}_M \cap T_U(\bar{u}(\omega,t)) \\        
          &\{ v\in \bar{B}_M\cap T_U(\bar{u}(\omega,t)):  \langle \zeta(\omega,t),v \rangle \geq \dfrac{1}{n} \}, \textit{  otherwise}.
        \end{array} \right.
\end{equation*}

The map $ (\omega,t,v)\rightarrow \langle \zeta(\omega,t),v \rangle $ is continuous in $ v $. Moreover, since $ \mathbb{R}^k $ is separable, the map can be expressed as the upper limit of a countable family of measurable functions and therefore is measurable. Therefore $ \Pi_n^M $ is measurable since countable intersection of measurable set-valued functions is still measurable. Hence, by Theorem \ref{measelectionthm}, $ \Pi_n^M $ admits a measurable selection $ v_n^M \in L^4(S;\mathbb{R}^k) $. Note that \eqref{twoinequalities} implies that the set
\begin{equation*}
\{ (\omega,t):\Pi_n^M(\omega,t)\neq \{0\} \}
\end{equation*}
must have $ \mu $ measure 0. Hence, we conclude that there exists a set, denoted as $ S_n^M $, where
\begin{equation*}
S_n^M = \{(\omega,t): \Pi^M_n(\omega,t)=\{0\}\}
\end{equation*}
and $ \mu(S_n^M)=1 $. Consequently, we have
\begin{equation}\label{measelecineqfirstineq}
\langle \zeta(\omega,t),v \rangle < \dfrac{1}{n} \ \ \forall v\in \bar{B}_M\cap T_{U}(\bar{u}(\omega,t)) \textit{  on } S^M_n.
\end{equation}
Define $ S^M=\bigcap_{n=1}^\infty S^M_n $ with $ \mu(S^M)=1 $ since $ \mu(S_n^M)=1 \ \forall n\in\mathbb{N}$. Moreover, since \eqref{measelecineqfirstineq} holds for all $ n $, we have
\begin{equation}\label{measelecineq2firstineq}
\langle \zeta(\omega,t),v \rangle \leq 0 \ \forall v\in\bar{B}_M\cap T_{U}(\bar{u}(\omega,t)) \textit{  on } S^M.
\end{equation}
Since \eqref{measelecineq2firstineq} holds for arbitrary $ M $, we obtain that
\begin{equation}\label{necetocomb1}
\langle \zeta(\omega,t),v \rangle \leq 0 \textit{ for } \forall v\in T_{U}(\bar{u}(\omega,t)) \  \textit{$\mu$-almost surely}.
\end{equation}

Next, we consider the second inequality in \eqref{twoinequalities}.
Define the partial generalized directional derivative of the Hamiltonian $ H $ at $ \bar{u}(t) $ in the direction $ v(t) $ as
\begin{equation*}
\begin{array}{cl}
-H_u^o(\bar{u}(t);v(t))= \limsup\limits_{u\rightarrow\bar{u},\epsilon\rightarrow 0}\dfrac{1}{\epsilon}\bigg\{-H(t,\bar{x}(t), u(t)+\epsilon v(t),\alpha(t-),\bar{p}(t),\bar{q}(t),s(t))\\
+H(t,\bar{x}(t), u(t),\alpha(t-),\bar{p}(t),\bar{q}(t),s(t))\bigg\}.
\end{array}
\end{equation*}
Using Fatou's Lemma on the second inequality in \eqref{twoinequalities}, we have
\begin{align}
& E\left\{ \int_0^T -H_u^o(\bar{u}(t);v(t))+\langle \zeta(t),v(t) \rangle dt\right\} 
 \geq (\mathcal{H}^{\bar{u}})^o(\bar{u};v)+E\int_0^T\langle \zeta(t),v(t) \rangle dt \geq 0\label{afterfatou}
\end{align}
\nocite{*}

Let $ M > 0 $ and define $ \bar{B}_M \triangleq \{ v\in \mathbb{R}^k : \|v\|\leq M \} $. For any $ n \in \mathbb{N}$, define a set-valued function $ \Gamma_n^M $ as follows
\begin{equation*}
\Gamma_n^M (\omega,t) = \left\{ \begin{array}{cl}
          &\{0\}, \ \textit{ if  } -H_u^o(\bar{u}(\omega,t);v)+\langle \zeta(\omega,t),v \rangle > -\dfrac{1}{n} \ \forall v\in \bar{B}_M \\     
          &\{ v\in \bar{B}_M: -H_u^o(\bar{u}(\omega,t);v) +\langle \zeta(\omega,t),v \rangle \leq -\dfrac{1}{n} \}, \textit{  otherwise}.
        \end{array} \right.
\end{equation*}
Using a similar argument as above, with the help of Theorem \ref{measelectionthm} and 
\eqref{afterfatou}, we can show that  the set
$\{ (\omega,t):\Gamma_n^M(\omega,t)\neq \{0\} \}$
must have $ \mu $ measure 0, which implies that
\begin{equation}\label{necetocomb2}
-H_u^o(\bar{u}(\omega,t);v))+\langle \zeta(\omega,t),v \rangle \geq 0 \  \textit{$\mu$-almost surely}.
\end{equation}
Combining \eqref{necetocomb1} and \eqref{necetocomb2}, we conclude
\begin{equation*}
0\in \partial_u(-H)(t,\bar{x}(t),\bar{u}(t),\alpha(t-),\bar{p}(t),\bar{q}(t)) + N_U(\bar{u}(t)),\ a.e. t\in[0,T], \ 
\textit{$\mathbb{P}$-a.s}.
\end{equation*}

\subsection{Proof of Theorem \ref{WSSMP} }
Given admissible pair $ (x,u) $, define
\begin{equation*}
H(t,x(t),u(t)) \triangleq H(t,x(t),u(t),\alpha(t-),\bar{p}(t),\bar{q}(t))  \textit{ for }  \forall t\in[0,T],\ \mathbb{P}\textit{-a.s.}
\end{equation*}
Under the convexity condition,  Clarke's generalized gradient and normal cone coincide with the subdifferential and normal cone  in the sense of convex analysis. Moreover, combining \eqref{neceSMPcond} and the concavity of $ H(t,\bar{x}(t),\cdot) $ for all $ t\in [0,T] $ a.s, we conclude that 
\begin{equation*}
H(t,\bar{x}(t),\bar{u}(t)) = \max_{u\in U}H(t,\bar{x}(t),u),\ \textit{a.e. } t\in[0,T],\ \mathbb{P}\textit{-a.s.}
\end{equation*}
Define $ \xi(t)\triangleq x(t)-\bar{x}(t) $ satisfying 
\begin{equation*}
\left\{
\begin{array}{cl}     
    d\xi(t) &= \left\{b(t,x(t),u(t),\alpha(t-))-b(t,\bar{x}(t),\bar{u}(t),\alpha(t-))\right\}dt \\
    &+ \sum\limits_{j=1}^m\left\{\sigma^j(t,x(t),u(t),\alpha(t-))-\sigma^j(t,\bar{x}(t),\bar{u}(t),\alpha(t-))\right\}dW^j(t),\ t\in[0,T],\\
    \xi(0) &= 0, \alpha(0)=i_0.
\end{array}\right.
\end{equation*}
Following a standard separating hyperplane argument in convex analysis (see \cite[Chapter 5]{rockafeller:convexanalysis}), we obtain
\begin{equation}\label{sufficientclaim}
\int_0^T \left\{ H(t,x(t),u(t))-H(t,\bar{x}(t),\bar{u}(t)) \right\} \leq \int_0^T\langle H_x(t,\bar{x}(t),\bar{u}(t)),\xi(t) \rangle dt
\end{equation}
for any admissible pair $ (x,u) $. Detailed proof of \eqref{sufficientclaim}  can be found in \cite{oksendal:SSMP}.

Applying Ito's formula to $\langle \bar{p}(t),\xi(t) \rangle$, noting the convexity of $ h $, 
the inequality (\ref{sufficientclaim})   and the definition of the Hamilitonian \eqref{Hamiltonian}, we have
\begin{align*}
&E\{h(x(T),\alpha(T))-h(\bar{x}(T),\alpha(T))\}\\
\geq &E\langle h_x(\bar x(T),\alpha(T)),\xi(T\rangle)\rangle\\
 =& -E\langle \bar{p}(T),\xi(T) \rangle \\
= & E \int_0^T \bigg\{ \langle H_x(t,\bar{x}(t),\bar{u}(t)),\xi(t) \rangle \\
&- \langle \bar{p}(t),b(t,x(t),u(t),\alpha(t-))-b(t,\bar{x}(t),\bar{u}(t),\alpha(t-)) \rangle\\
&-\sum_{j=1}^m\langle \bar{q}^j(t),\sigma^j(t,x(t),u(t),\alpha(t-))-\sigma^j(t,\bar{x}(t),\bar{u}(t),\alpha(t-))\rangle \bigg\} dt \\
\geq & -E\int_0^T\{ f(t,x(t),u(t),\alpha(t-))-f(t,\bar{x}(t),\bar{u}(t),\alpha(t-)) \}dt.
\end{align*}
Therefore $J(\bar{u}) \leq J(u)$
for all $ u\in \mathcal{U}_{ad} $.

\section{Conclusion}
We have proved in the paper a weak version of the necessary and sufficient stochastic maximum principle in a regime-switching diffusion model. Instead of insisting on the maximum condition of the Hamiltonian, we showed that $ 0 $ belongs to the sum of Clarke's generalized gradient of $ -H $ and Clarke's normal cone at the optimal control $ \bar{u} $, which also removes the requirement of the differentiability of the functions in the control variable. Under certain concavity conditions on the Hamiltonian, the necessary condition becomes sufficient. The theorem does not involve any second order terms, hence the second order differentiability of the functions in the state variable  is not required. Moreover, the absence of the second order adjoint equation considerably simplifies the SMP. Futher research on this topic includes the extension  of the weak SMP to more general stochastic control systems such as nonconvex control constraints and locally Lipschitz coefficients. We are currently working on these problems.

\bigskip
{\noindent\bf Acknowledgment}. The authors are grateful to Professor Nicole El Karoui for the useful discussions on the paper, especially on the contents of the measurability of stochastic processes.

\appendix 
\section{Appendix}
\subsection{Proof of Theorem \ref{RSBSDEtheorem}}

\begin{proof}
Consider the function $ \Phi $ on $ \mathbb{S}^2([0,T])\times L^2(W,[0,T])\times L^2(Q,[0,T]) $ mapping $ (Y,Z,S)\in \mathbb{S}^2([0,T])\times L^2(W,[0,T])\times L^2(Q,[0,T]) $ to $ \left(\hat{Y},\hat{Z},\hat{S}\right)=\Phi(Y,Z,S) $ defined by
\begin{equation*}
\hat{Y}(t)=\xi+\int_t^Tf(s,Y(s),Z(s))ds-\int_t^TZ(s)dW(s)-\int_t^TS(s)\bullet dQ(s).
\end{equation*}
Consider the square-integrable martingale 
\begin{equation*}
M(t)=E\left[\xi+\int_0^Tf(s,Y(s),Z(s))ds\middle|\mathcal{F}_t\right].
\end{equation*}
According to Theorem \ref{MRT}, there exists unique $ \left( \hat{Z},\hat{S} \right)\in L^2(W,[0,T])\times L^2(Q,[0,T]) $ such that
\begin{equation*}
M(t)=M(0)+\int_0^t\hat{Z}(s)dW(s)+\int_0^t\hat{S}(s)\bullet dQ(s).
\end{equation*}
We then define the process $ \hat{Y}(t) $ by 
\begin{align*}
\hat{Y}(t)&=E\left[ \xi+\int_t^Tf(s,Y(s),Z(s))ds\middle|\mathcal{F}_t \right]\\
          &=M(t)-\int_0^tf(s,\alpha(s),Y(s),Z(s))ds\\
          &=M(0)+\int_0^t\hat{Z}(s)dW(s)+\int_0^t\hat{S}(s)\bullet dQ(s)-\int_0^tf(s,Y(s),Z(s))ds\\
          &=\xi+\int_t^Tf(s,Y(s),Z(s))ds-\int_t^T\hat{Z}(s)dW(s)-\int_t^T\hat{S}(s)\bullet dQ(s).
\end{align*}
By Doob's $ L^2 $ inequality, we have
\begin{align*}
&E\left[\sup_{0\leq t\leq T}\middle|\int_t^T\hat{Z}(s)dW(s)\middle|\right]\leq 4E\left[ \int_0^T|\hat{Z}(s)|^2ds \right]<\infty,\\
&E\left[\sup_{0\leq t\leq T}\middle|\int_t^T\hat{S}(s)\bullet dQ(s)\middle|\right]\leq 4E\left[ \sum_{l=1}^n\sum_{i,j=1}^d\int_0^T|\hat{S}_{ij}^{(l)}(s)|^2d[Q_{ij}](s) \right]<\infty.
\end{align*}
Under the assumptions on $ (\xi,f) $, we conclude that $ \hat{Y}\in S^2([0,T]) $. Hence $ \Phi $ is a well defined function from $ S^2([0,T])\times L^2(W,[0,T])\times L^2(Q,[0,T]) $ into itself. Next, we show that $ (\hat{Y},\hat{Z},\hat{S}) $ is a solution to the regime switching BSDE \eqref{BSDE} if and only if it is a fixed point of $ \Phi $.

Let $ (U,V,\Gamma) $, $ (U',V',\Gamma') \in S^2([0,T])\times L^2(W,[0,T])\times L^2(Q,[0,T])$. Apply function $ \Phi $ and obtain $ (Y,Z,S)=\Phi(U,V,\Gamma),\ (Y',Z',S')=\Phi(U',V',\Gamma') $. Set $ (\bar{U},\bar{V},\bar{\Gamma}) =(U-U',V-V',\Gamma-\Gamma')$, $ (\bar{Y},\bar{Z},\bar{S})=(Y-Y',Z-Z',S-S') $ and $ \bar{f}(t)=f(t,U(t),V(t))-f(t,U'(t),V'(t)) $. Take $ \beta>0 $ to be chosen later and apply Ito's formula to $ e^{\beta s}\vert \bar{Y} \vert^2 $ on $ [0,T] $,
\begin{equation}\label{BSDEito}
\begin{array}{cl}
\vert \bar{Y}(0) \vert^2 =& -\displaystyle\int_0^Te^{\beta t}\left( \beta\vert\bar{Y}(t)\vert^2-2\bar{Y}(t)^\intercal\bar{f}(t) \right)dt-\int_0^Te^{\beta t}\vert \bar{Z}(t) \vert^2dt\\
&-\displaystyle\int_0^Te^{\beta t}\sum_{l=1}^n\sum_{i,j=1}^d\vert\bar{S}^{(l)}_{ij} \vert^2d\left[Q_{ij}\right](t)-2\int_0^T e^{\beta t}\bar{Y}(t)^\intercal\bar{Z}(t)dW(t)\\
&-2\displaystyle\int_0^Te^{\beta t}\sum_{l=1}^n\sum_{i,j=1}^d\bar{Y}^{(l)}(t)\bar{S}_{ij}^{(l)}(t)dQ_{ij}(t).
\end{array}
\end{equation}

Observe that, according to Young's inequality
\begin{align*}
&E\left[\left( \int_0^T e^{2\beta t}\vert \bar{Y}(t) \vert^2\vert \bar{Z}(t) \vert^2dt \right)^{\frac{1}{2}}\right]\leq \dfrac{e^{\beta T}}{2}E\left[ \sup_{0\leq t\leq T}\vert \bar{Y}(t) \vert^2+\int_0^T\vert \bar{Z}(t) \vert dt \right]<\infty,\\
&E\left[\left( \int_0^T e^{2\beta t}\vert \bar{Y}^{(l)}(t) \vert^2\vert \bar{S}_{ij}^{(l)}(t) \vert^2 d\left[Q_{ij}\right](t) \right)^{\frac{1}{2}}\right]\\
&\leq \dfrac{e^{\beta T}}{2}E\left[ \sup_{0\leq t\leq T}\vert \bar{Y}^{(l)}(t) \vert^2+\int_0^T\vert \bar{S}^{(l)}_{ij}(t) \vert^2d\left[Q_{ij}\right](t) \right]<\infty.
\end{align*}
Hence $ \int_0^t e^{\beta s}\bar{Y}(s)^\intercal\bar{Z}(s)dW(s) $ and $ \int_0^te^{\beta s}\sum_{l=1}^n\sum_{i,j=1}^d\bar{Y}^{(l)}(s)\bar{S}_{ij}^{(l)}(s)dQ_{ij}(s) $ are true martingales by the Burkholder-Davis-Gundy inequality. Taking expectation in \eqref{BSDEito}, we get
\begin{equation}\label{BSDEtosub}
\begin{array}{ll}
E\vert \bar{Y}(0) \vert^2 + E\bigg\lbrace\displaystyle\int_0^T e^{\beta t}\bigg[ \left(\beta\vert \bar{Y}(t) \vert^2+\vert \bar{Z}(t) \vert^2\right)dt+\sum_{l=1}^n\sum_{i,j=1}^d\vert \bar{S}^{(l)}_{ij}(t) \vert^2d\left[Q_{ij}\right](t) \bigg]\bigg\rbrace\\
= \displaystyle2E\left[\int_0^Te^{\beta t}\bar{Y}(t)^\intercal\bar{f}(t)dt \right] \leq 2C_fE\left[ \int_0^Te^{\beta t}\vert \bar{Y}(t) \vert\left( \vert \bar{U}(t)\vert+\vert\bar{V}(t) \vert \right)dt \right]\\
\leq \displaystyle 4C_f^2E\bigg[\int_0^T e^{\beta t}\vert \bar{Y}(t) \vert^2dt \bigg]+\dfrac{1}{2}E\bigg[ \int_0^T e^{\beta t}\left( \vert \bar{U}(t) \vert^2+\vert \bar{V}(t) \vert^2 \right) dt \bigg].
\end{array}
\end{equation}
Take $ \beta=1+4C_f^2 $ and substitute into \eqref{BSDEtosub}, we have
\begin{align*}
&E\bigg[ \int_0^T e^{\beta t}\left( \vert \bar{Y}(t) \vert^2+\vert \bar{Z}(t) \vert^2 \right)dt+\int_0^Te^{\beta t}\sum_{l=1}^n\sum_{i,j=1}^d\vert \bar{S}^{(l)}_{ij}(t)\vert^2d[Q_{ij}](t) \bigg]\\
&\leq \dfrac{1}{2}E\bigg[ \int_0^T e^{\beta t}\left( \vert \bar{U}(t) \vert^2+\vert \bar{V}(t) \vert^2 \right)dt \bigg] \\
&\leq \dfrac{1}{2}E\bigg[ \int_0^T e^{\beta t}\left( \vert \bar{U}(t) \vert^2+\vert \bar{V}(t) \vert^2 \right)dt \bigg]+\dfrac{1}{2}E\bigg[ \int_0^Te^{\beta t}\sum_{l=1}^n\sum_{i,j=1}^d\vert \bar{\Gamma}^{(l)}_{ij}(t) \vert^2d[Q_{ij}](t) \bigg].
\end{align*}
Notice that $ L^2(W,[0,T]) $ and $ L^2(Q,[0,T]) $ are Hilbert spaces and therefore the space $ \mathbb{S}^2([0,T])\times L^2(W,[0,T])\times L^2(Q,[0,T]) $ endowed with the norm 
\begin{align*}
\| (Y,Z,S) \|_{\beta}=\left\lbrace E\bigg[ \int_0^T e^{\beta t}\left( \vert \bar{Y}(t) \vert^2+\vert \bar{Z}(t) \vert^2 \right)dt+\int_0^Te^{\beta t}\sum_{l=1}^n\sum_{i,j=1}^d\vert\bar{S}^{(l)}_{ij}(t)\vert^2 d[Q_{ij}](t) \bigg] \right\rbrace^{\frac{1}{2}}
\end{align*}
is a Banach space. We conclude that $ \Phi $ admits a unique fixed point which is the solution to the BSDE \eqref{BSDE}.
\end{proof}

\bibliographystyle{plain-annote}
\bibliography{bib_ref}

\begin{thebibliography}{10}

\bibitem{aubin:setvaluedanalysis}
J-P. Aubin and H.~Frankowska.
\newblock {\em Set-Valued Analysis}.
\newblock Birkh\"{a}user, 1990.


\bibitem{bensoussan:lecturestochcontrol}
A.~Bensoussan.
\newblock Lectures on stochastic control.
\newblock {\em Lecture Notes in Mathematics}, 972:1--62, 1981.


\bibitem{bismut:conjugateconvex}
J.~M. Bismut.
\newblock Conjugate convex functions in optimal stochastic control.
\newblock {\em Journal of Mathematical Analysis and Applications}, 44:384--404,
  1973.


\bibitem{bismut:linearquarcontrol}
J.~M. Bismut.
\newblock Linear quadratic optimal control with random coefficients.
\newblock {\em SIAM Journal on Control and Optimization}, 14:419--444, 1976.


\bibitem{bismut:dualityoptcontrol}
J.~M. Bismut.
\newblock An introductory approach to duality in optimal stochastic control.
\newblock {\em SIAM Review}, 20:62--78, 1978.


\bibitem{karatzas:SMPrandomcoeff}
A.~Cadenillas and I.~Karatzas.
\newblock The stochastic maximum principle for linear convex optimal control
  with random coefficients.
\newblock {\em SIAM Journal on Control and Optimization}, 33:590--624, 1995.


\bibitem{clarke:gengradapp}
F.~H. Clarke.
\newblock Generalized gradients and applications.
\newblock {\em Trans. Amer. Math. Soc}, 205, 1975.


\bibitem{clarke:shadowprices}
F.~H. Clarke.
\newblock Shadow prices and duality for a class of optimal control problems.
\newblock {\em SIAM Journal on Control and Optimization}, 17, 1979.


\bibitem{clarke:gengradfunctional}
F.~H. Clarke.
\newblock Generalized gradients of lipschitz functionals.
\newblock {\em Advances in Mathematics}, 40:52--67, 1981.


\bibitem{clarke:optimization}
F.~H. Clarke.
\newblock {\em Optimization and Nonsmooth Analysis}.
\newblock SIAM, 1990.


\bibitem{DonnellyPhdThesis}
C.~Donnelly.
\newblock {\em {Convex duality in constrained mean-variance portfolio
  optimization under a regime-switching model}}.
\newblock Phd thesis, {University of Waterloo}, 2008.

\bibitem{donnelly:SMPregimeswitching}
C.~Donnelly.
\newblock Sufficient stochastic maximum principle in the regime-switching
  diffusion model.
\newblock {\em Applied Mathematics and Optimization}, 62(2):155--169, 2011.


\bibitem{donheu:regimeswitching}
C.~Donnelly and A.~J. Heunis.
\newblock Quadratic risk minimization in a regime-switching model with
  portfolio constraints.
\newblock {\em SIAM Journal on Control and Optimization}, 50(4):2431--2461,
  2012.


\bibitem{flemingrishel:detestochcontrol}
W.~H. Fleming and R.~W. Rishel.
\newblock {\em Deterministic and Stochastic Optimal Control}.
\newblock Springer-Verlag, 1975.


\bibitem{flemingsoner:controledmarkprocess}
W.~H. Fleming and H.~M. Soner.
\newblock {\em Controlled Markov Processes and Viscosity Solutions}.
\newblock Springer-Verlag, 2006.


\bibitem{oksendal:SSMP}
N.~C. Framstad, B.~Oksendal, and A.~Sulem.
\newblock Sufficient stochastic maximum principle for the optimal control of
  jump difussions and applications to finance.
\newblock {\em Journal of Optimization Theory and Applications}, 121(1):77--98,
  2004.


\bibitem{haussmann:SMPdiffusion}
U.~G. Haussmann.
\newblock {\em A Stochastic Maximum Principle for Optimal Control of
  Diffusions}.
\newblock Longman Scientific and Technical, 1986.


\bibitem{kushner:fixedtimecontrol}
H.~J. Kushner.
\newblock On the stochastic maximum principle: Fixed time of control.
\newblock {\em Journal of Mathematical Analysis and Applications}, 11:78--92,
  1965.


\bibitem{kushner:nececond}
H.~J. Kushner.
\newblock Necessary conditons for continuous parameter stochastic optimization
  problems.
\newblock {\em SIAM Journal on Control and Optimization}, 10:550--565, 1972.


\bibitem{maoyuan:SDEswitching}
X.~Mao and C.~Yuan.
\newblock {\em Stochastic Differential Equations with Markovian Switching}.
\newblock Imperial College Press, 2006.


\bibitem{peng:SMP}
S.~Peng.
\newblock A general stochastic maximum principle for optimal control problems.
\newblock {\em SIAM Journal on Control and Optimization}, 28:966--979, 1990.


\bibitem{pham:contimuoustimeSC}
H.~Pham.
\newblock {\em Continuous-time Stochastic Control and Optimization with
  Financial Applications}.
\newblock Springer, 2009.


\bibitem{rockafeller:convexanalysis}
R.~T. Rockafeller.
\newblock {\em Convex Analysis}.
\newblock Princeton University Press, 1970.


\bibitem{rw:diffusions}
L.~C.~G Rogers and D.~Williams.
\newblock {\em Diffusions, Markov Processes and Martingales: Volume 2, It\^{o}
  Calculus}.
\newblock Cambridge University Press, 2000.


\bibitem{tangli:SMPjump}
S.~Tang and X.~Li.
\newblock Necessary conditions for optimality control of stochastic systems
  with random jumps.
\newblock {\em SIAM Journal on Control and Optimization}, 32:1447--1475, 1994.


\bibitem{yong.zhou:stochasticcontrols}
J.~Yong and X.Y. Zhou.
\newblock {\em Stochastic Controls: Hamiltonian Systems and HJB Equations}.
\newblock Springer, 1999.


\bibitem{zhangelliottsiu:SMPregimeswitchingjump}
X.~Zhang, R.~J. Elliott, and T.~K. Siu.
\newblock A stochastic maximum principle for a markov regime-switching
  jump-diffusion model and its application to finance.
\newblock {\em SIAM Journal on Control and Optimization}, 50:964--990, 2012.


\bibitem{zhou:unifiedtreatment}
X.~Y. Zhou.
\newblock A unified treatment of maximum principle and dynamic programming in
  stochastic controls.
\newblock {\em Stochastic and Stochastic Reports}, 36:137--161, 1991.


\bibitem{zhou:sufficientSMPwithcontroldiffus}
X.~Y. Zhou.
\newblock Sufficient conditions of optimality for stochastic systems with
  controllable diffusions.
\newblock {\em IEEE Transactions on Automatic Control}, AC-41:1176--1179, 1996.


\end{thebibliography}

\end{document}